\documentclass{amsart}

\usepackage{amssymb,amsfonts, amsmath, amsthm}
\usepackage[all,arc]{xy}
\usepackage{enumerate}
\usepackage{mathrsfs}
\usepackage{mathtools}
\usepackage[mathscr]{euscript}
\usepackage{array}
\usepackage{appendix}

\usepackage{tikz}
\usepackage{mathpazo}
\usepackage{tikz-cd}
\usepackage{textcomp}
\usetikzlibrary{decorations.pathmorphing,shapes}
\usepackage[pagebackref, colorlinks=true, linkcolor=blue, citecolor = red]{hyperref}
 \renewcommand*{\backref}[1]{}
 \renewcommand*{\backrefalt}[4]{({%
     \ifcase #1 Not cited.%
           \or On p.~#2%
           \else On pp.~#2%
     \fi%
     })}

\usepackage[nameinlink,capitalise,noabbrev]{cleveref}
\crefname{subsection}{Subsection}{Subsection}

\usetikzlibrary{patterns}

\usepackage{geometry}
\geometry{margin=1in}

\usepackage{todonotes}

\setlength{\parskip}{0.8em}

\newcommand{\s}{\mathscr{S}}
\renewcommand{\ss}{s\mathscr{S}}
\newcommand{\sss}{ss\mathscr{S}}

\newcommand{\C}{\mathscr{C}}

\newcommand{\bI}{\mathbb{I}}

\newcommand{\D}{\mathscr{D}}

\newcommand{\F}{\mathscr{F}}
\newcommand{\V}{\mathscr{V}}

\newcommand{\cM}{\mathcal{M}}
\newcommand{\M}{\mathscr{M}}
\newcommand{\N}{\mathcal{N}}

\newcommand{\R}{\mathscr{R}}

\newcommand{\Diag}{\mathrm{Diag}}

\newcommand{\id}{\mathrm{id}}
\newcommand{\ds}{\displaystyle}

\newcommand{\Hom}{\mathrm{Hom}}
\newcommand{\Map}{\mathrm{Map}}

\newcommand{\Fun}{\mathrm{Fun}}

\renewcommand{\exp}[2]{\mathrm{exp}(#1,#2)}

\newcommand{\sint}{s\hspace{-0.04in}\int}

\newcommand{\sT}{s\mathscr{T}}
\newcommand{\bT}{\mathbb{T}}
\newcommand{\sH}{s\mathscr{H}}

\newcommand{\sbT}{s\bT}
\newcommand{\sbI}{s\bI}

\newcommand{\ssbI}{s\sbI}
\newcommand{\ssbT}{s\sbT}
\newcommand{\ssint}{s\sint}

\newcommand{\ssT}{s\sT}
\newcommand{\ssH}{s\sH}

\newcommand{\ordered}[1]{< #1 >}

\newcommand{\phiDiag}{\Diag_1}

\newcommand{\LFib}{\mathscr{L}\mathscr{F}\mathrm{ib}}
\newcommand{\Val}{\mathscr{V}\mathrm{al}}
\newcommand{\fDiag}{\mathscr{D}\mathrm{iag}}
\newcommand{\LEmb}{\mathscr{L}\mathscr{E}\mathrm{mb}}
\newcommand{\VEmb}{\mathscr{V}\mathscr{E}\mathrm{mb}}

\newcommand{\iF}{p_1}
\newcommand{\iphi}{p_2}
\newcommand{\Fib}{\mathscr{F}\mathrm{ib}}

\newcommand{\setref}[1]{\cref{Subsec Simplicial Sets}(\ref{#1})}
\newcommand{\spaceref}[1]{\cref{Subsec Simplicial Spaces}(\ref{#1})}
\newcommand{\leftref}[1]{\cref{Subsec A Reminder on the Covariant Model Structure}(\ref{#1})}

\newcommand{\adjun}[4]{
\begin{tikzcd}[row sep=0.5in, column sep=0.5in]
 #1  \arrow[r, shift left=1.8, "#3", "\bot"'] \pgfmatrixnextcell
 #2 \arrow[l, shift left=1.8, "#4"] 
\end{tikzcd}
}

\newcommand{\comsq}[8]{
  \begin{tikzcd}[row sep=0.25in, column sep=0.25in]
    #1 \arrow[r, "#5"] \arrow[d, "#6"']
    \pgfmatrixnextcell #2 \arrow[d, "#7"] \\
    #3 \arrow[r, "#8"]
    \pgfmatrixnextcell #4
  \end{tikzcd}
}

\newcommand{\pbsq}[8]{
  \begin{tikzcd}[row sep=0.25in, column sep=0.25in]
    #1 \arrow[r, "#5"] \arrow[d, "#6"'] \arrow[dr, phantom, "\ulcorner", very near start]
    \pgfmatrixnextcell #2 \arrow[d, "#7"] \\
    #3 \arrow[r, "#8"']
    \pgfmatrixnextcell #4
  \end{tikzcd}
}

\setcounter{section}{-1}

\newtheorem{theone}[equation]{Theorem}
\newtheorem{lemone}[equation]{Lemma}
\newtheorem{propone}[equation]{Proposition}
\newtheorem{corone}[equation]{Corollary}

\theoremstyle{definition}
\newtheorem{defone}[equation]{Definition}
\newtheorem{exone}[equation]{Example}

\theoremstyle{remark}
\newtheorem{remone}[equation]{Remark}
\newtheorem{notone}[equation]{Notation}

\newtheoremstyle{TheoremNum}
{}{}              
{\itshape}                      
{}                              
{\bfseries}                     
{.}                             
{ }                             
{\thmname{#1}\thmnote{ \bfseries #3}}
\theoremstyle{TheoremNum}

\numberwithin{equation}{section}

\makeatletter
\def\@seccntformat#1{%
  \expandafter\ifx\csname c@#1\endcsname\c@section\else
  \csname the#1\endcsname\quad
  \fi}
\makeatother   

\title{Cartesian Fibrations of Complete Segal Spaces}

\author{Nima Rasekh}
\address{{\'E}cole Polytechnique F{\'e}d{\'e}rale de Lausanne, SV BMI UPHESS, Station 8, CH-1015 Lausanne, Switzerland}
\email{nima.rasekh@epfl.ch}

\date{February 2021}

\begin{document}

\begin{abstract}
 Cartesian fibrations were originally defined by Lurie in the context of quasi-categories and are commonly used in $(\infty,1)$-category theory to study presheaves valued in $(\infty,1)$-categories. 
 In this work we define and study fibrations modeling presheaves valued in simplicial spaces and their localizations. This includes defining a model structure for these fibrations and giving effective tools to recognize its fibrations and weak equivalences.
 This in particular gives us a new method to construct Cartesian fibrations via complete Segal spaces. In addition to that, it allows us to define and study fibrations modeling presheaves of Segal spaces. 
\end{abstract}

\maketitle
\addtocontents{toc}{\protect\setcounter{tocdepth}{1}}

\tableofcontents

\section{Introduction}\label{Sec Introduction}

\subsection{Cartesian Fibrations of Quasi-Categories}
The {\it theory of $(\infty,1)$-categories} helped formalize the notion of {\it homotopies} that first arose in classical algebraic topology.
This helped overcome many early challenges in algebraic topology.
For example it helped develop a homotopy invariant notion of colimit, making sense of {\it homotopy colimits} \cite{bousfieldkan1972yellowmonster}, or helped properly define a {\it smash product of spectra} \cite{gepnergrothnikolaus2015infiniteloopspacemachine}, an important problem in the early days of stable homotopy theory \cite{adams1995stable,ekmm1995stable}.
More generally, it created a foundation for properly developing {\it ``homotopy coherent mathematics"}, which has now found applications in many branches of mathematics, such as algebraic geometry \cite{lurie2018sag}, differential geometry \cite{nikolausschreiberstevenson2015principaltheory,nikolausschreiberstevenson2015principalpres} and even mathematical physics \cite{lurie2009cobordism}.
As one might expect such benefits also come with a price. For example, it greatly complicates the notion of functoriality, which now needs to be homotopy coherent and hence requires checking an infinite number of conditions.

Fortunately, certain important classes of functors can be defined in alternative ways, that are often easier to construct in practice. For example functors out of an $(\infty,1)$-category into the $(\infty,1)$-category of spaces are equivalent to {\it left fibrations} over that $(\infty,1)$-category. This was first observed by Joyal who was developing the category theory of {\it quasi-categories}, a popular model of $(\infty,1)$-categories \cite{joyal2008notes,joyal2008theory}. It was then further studied by Lurie, who also presented one of the first proofs of the equivalence between functors and fibrations in the context of quasi-categories \cite{lurie2009htt}. In the subsequent years many authors have reviewed the theory of left fibrations and its relation with functors from many different perspectives: There are alternative methods for defining the model structure for left fibrations, the {\it covariant model structure}, in the context of quasi-categories \cite{nguyen2019covariant}. Moreover, there are now many alternative proofs of the equivalence between left fibrations and functors again in the quasi-categorical context \cite{heutsmoerdijk2015leftfibrationi,heutsmoerdijk2016leftfibrationii,stevenson2017covariant,cisinski2019highercategories}.
There are also studies of left fibrations using {\it complete Segal spaces} \cite{rezk2001css}, another model of $(\infty,1)$-categories \cite{debrito2018leftfibration,rasekh2017left,kazhdanvarshvsky2014yoneda}.  Moreover, there is an analysis of left fibrations in the context of an $\infty$-cosmos, which is a model-independent approach to $(\infty,1)$-category theory using various ideas from $2$-category theory \cite{riehlverity2018elements}. Finally, left fibrations have also been studied in a homotopy type theoretical context \cite{riehlshulman2017rezktypes}.

Another class of functors that can studied via fibrations are functors valued in $(\infty,1)$-categories themselves. Here the corresponding fibrations are known as {\it coCartesian fibrations}. These were first defined by Lurie \cite{lurie2009htt}, who proved an equivalence between fibrations and functors by constructing a Quillen equivalence between appropriately defined model categories. However, coCartesian fibrations have not received the same attention that left fibrations have. There has been interesting work on the model-independent aspects of coCartesian fibrations, both from a quasi-categorical perspective \cite{mazelgee2019cartfib,ayalafrancis2020fibrations} as well as from an $\infty$-cosmos perspective \cite{riehlverity2018elements}. However, the {\it coCartesian model structure} and its equivalence with functors in \cite{lurie2009htt} have not been tackled again in the quasi-categorical setting, let alone other models of $(\infty,1)$-categories.

There are several complicating factors that have contributed to our current predicament. One very mysterious issue that arises when studying coCartesian fibrations is that although quasi-categories are simplicial sets, the model structure for coCartesian fibrations has only been defined for {\it marked simplicial sets} and it is widely believed that it is not possible to define an appropriate model structure on simplicial sets that can help us study coCartesian fibrations. This technicality adds a layer, in particular as the category of marked simplicial sets is not a presheaf category hence depriving us of many techniques to study fibrations (that for example play an important role in \cite{cisinski2019highercategories,nguyen2019covariant}). Another complicating factor comes from the fact that functors into $(\infty,1)$-categories have an inherent {\it $(\infty,2)$-categorical} character (as we can talk about natural transformations of such functors) and while there are several models of $(\infty,2)$-categories in the literature \cite{rezk2010thetanspaces,barwick2005nfoldsegalspaces,verity2008complicial,ara2014highersegal}, the study of its category theory and in particular fibrations is still in its early stages \cite{lurie2009goodwillie,gagnaharpazlanari2020inftytwofibrations}.

\subsection{Cartesian Fibrations via Complete Segal Objects}
Up to this point we discussed the importance of coCartesian fibrations and the need to find alternative perspectives.
The goal of this paper is to offer one such alternative perspective using {\it complete Segal objects} (also called {\it Rezk objects} \cite{riehlverity2017inftycosmos}). 
Before going into further details it is instructive to review the construction of complete Segal spaces, due to Rezk \cite{rezk2001css}, which goes as follows:
\begin{itemize}
	\item[(1)] He starts with the category of simplicial sets with the Kan model structure, giving us a model for spaces.
	\item[(2)] He then takes simplicial diagrams $X$ in spaces, defining {\it simplicial spaces}, and gives that the Reedy model structure.
	\item[(3)] He adds two restrictions by using Bousfield localization on the Reedy model structure:
	\begin{itemize}
		\item[(I)] {\it Segal Condition:} A Reedy fibrant simplicial space $X$ is a {\it Segal space} if the map 
		$$X_n \to X_1 \times_{X_0} ... \times_{X_0} X_1$$
		for $n \geq 2$ is a Kan equivalence.
		\item[(II)] {\it Completeness Condition:} A Segal space $X$ is a {\it complete Segal space} if the map
		$$X_{hoequiv} \to X_1$$
		is a Kan equivalence, where $X_{hoequiv}$ can be described as a finite limit \cite[10]{rezk2010thetanspaces}.
	\end{itemize}
	For a review of complete Segal spaces see \cref{Subsec Complete Segal Spaces}.
\end{itemize}  
Later complete Segal spaces model structure was proven to be equivalent to the model structure for quasi-categories \cite{joyaltierney2007qcatvssegal}
and other models of $(\infty,1)$-categories \cite{toen2005unicity,bergner2010survey}.

The beauty of the complete Segal space approach to $(\infty,1)$-categories is that the process we outlined above can be generalized from spaces to any $(\infty,1)$-category with finite limits, giving us a notion of complete Segal objects. In particular we can apply the process to the $(\infty,1)$-category of functors valued in spaces, or, equivalently, to left fibrations. 
Applying the process to the $(\infty,1)$-category of space-valued functors evidently results in the $(\infty,1)$-category of $(\infty,1)$-category-valued functors. This naturally motivates studying complete Segal objects of left fibrations as a fibrational analogue to functors valued in $(\infty,1)$-categories hence suggest following approach to coCartesian fibrations:
\begin{center}
	{\bf coCartesian fibrations are complete Segal objects in left fibrations.}
\end{center}

\subsection{Cartesian Fibration of Complete Segal Spaces}
 In order to start the process from left fibrations to coCartesian fibrations we first need to choose a model for our left fibrations. 
 Here we will use left fibrations of simplicial spaces as studied in \cite{rasekh2017left}, however, it should be noted that using left fibrations as studied by Lurie \cite{lurie2009htt} would give us the same results. In fact the equivalence of the two resulting coCartesian model structures has been proven in the follow-up work \cite{rasekh2021cartfibmarkedvscso}.
 
 Having decided which model of left fibrations to use we define coCartesian fibrations and study their properties simply by following the same three steps that Rezk used:
 \begin{itemize}
 	\item[(1)] Start with the category of simplicial spaces over a fixed simplicial space and give it a model structure such that the fibrant objects are the left fibrations. This model structure is known as the {\it covariant model structure} \cite{rasekh2017left} and is reviewed in \cref{Subsec A Reminder on the Covariant Model Structure}.
 	\item[(2)] Take simplicial diagrams in left fibrations. Then give the resulting category a Reedy model structure, calling it the {\it Reedy covariant model structure on bisimplicial spaces}. Then observe how the properties of the covariant model structure transfers to the Reedy covariant model structure. This is the content of  \cref{Sec The Reedy Covariant Model Structure}.
 	\item[(2.5)] Next we will do a general analysis how the Reedy covariant model structure behaves when we use Bousfield localizations, in particular analyzing the fibrant objects (\cref{cor:localized left}) and weak equivalences (\cref{the:recognition principle for localized Reedy right equivalences}) and studying its invariance (\cref{the:loc Ree contra invariant under CSS}). This is the goal of \cref{Sec Localizations of Reedy Right Fibrations}.
 	\item[(3)] Finally, we apply the results of the previous section and focus on the particular conditions associated with Segal spaces and complete Segal spaces to define Segal coCartesian fibrations and coCartesian fibrations. 
    We will cover that in \cref{sec:cartesian Fibrations}.
 \end{itemize}
 Note we can prove that the resulting model structure is Quillen equivalent to the Cartesian model structure on marked simplicial sets defined by Lurie. That is the main result of the follow up work \cite{rasekh2021cartfibmarkedvscso}.
  
\subsection{Why Complete Segal Space Approach?}
 Given that we already had a Cartesian model structure, why present an alternative way? Beside a theoretical satisfaction of approaching an interesting topic from a new angle, there are also concrete benefits:
 \begin{enumerate}
 	\item {\bf Exposition:} Cartesian fibrations are notoriously difficult to understand. The main source for many results is still \cite[Chapter 3]{lurie2009htt} and is quite technical, requiring a lot of background knowledge. This makes it difficult for most, except for a small number of  experts, to use Cartesian fibrations to prove new results.
 	
 	The complete Segal object approach to Cartesian fibrations requires far less theoretical background. It primarily relies on understanding left fibrations, which are in fact easier and have been studied by many different people meaning there are now excellent resources for mathematicians interested in fibrations. 
 	
 	\item {\bf Direct Proofs:} One important implication of the equivalence between left fibrations and space-valued functors is the fact that the covariant model structure is invariant under $(\infty,1)$-categorical equivalences, meaning that an equivalence in the model structure for quasi-categories gives us a Quillen equivalence of covariant model structures \cite[Remark 2.1.4.11]{lurie2009htt}. However, there are now also direct proofs of this fact that only use structural properties of the covariant model structure, both in the setting of quasi-categories \cite{heutsmoerdijk2015leftfibrationi} as well as complete Segal spaces \cite{rasekh2017left}.
 	
 	Generalizing to coCartesian fibrations, we can still use the equivalence with functors to deduce it is invariant under $(\infty,1)$-categorical equivalences, as has been done in \cite[Proposition 3.3.1.1]{lurie2009htt}. However, we do not have a direct proof only using the coCartesian model structure on marked simplicial sets. On the other hand, using the complete Segal approach to coCartesian fibrations allows us to generalize the invariant proof for left fibrations to coCartesian fibrations in a reasonably straightforward manner (as shown in \cref{the:cart invariant}).
	
 	\item {\bf Segal coCartesian Fibrations:}
 	{\it Homotopy type theory} is a new approach to the foundations that is inherently homotopy invariant \cite{hottbook2013}. 
 	It has opened the possibility of finding a model independent approach to $(\infty,1)$-category theory. As one would expect of an axiomatic system, one important question is their expressiveness, meaning which axioms are necessary to prove which result. An important example is the {\it univalence axiom} which is simply the type theoretic articulation of the completeness condition we use to define complete Segal spaces. 
 	For example, in their paper \cite{riehlshulman2017rezktypes} Riehl and Shulman introduce a notion of $(\infty,1)$-category, a {\it Rezk type}, inside their type theory. They then prove that the Yoneda lemma holds without the univalence axiom, whereas equality of various notions of adjunctions does require univalence.
 	
 	In the language of $(\infty,1)$-categories trying to tackle this question corresponds to proving a result for a general Segal spaces vs. observing that the completeness condition is in fact necessary. For example, the independence of univalence from the Yoneda lemma corresponds to proving the Yoneda lemma for Segal spaces, which in fact has been done (independently) in \cite{rasekh2017left}. In order to seriously pursue such question we would need a notion of fibration for Segal spaces, similar to coCartesian fibrations. Defining such fibrations does not seem possible using marked simplicial sets, whereas we can do so using the complete Segal approach (\cref{sec:cartesian Fibrations}). Hence, using the complete Segal approach to coCartesian fibration allows us to tackle more general question of interest related to foundations and the necessity of completeness.
 	
 	\item {\bf Representable Cartesian Fibrations:}
 	One important class of left fibrations are {\it representable left fibrations}, which are precisely the fibrations that correspond to corepresentable functors. These left fibrations play an extraordinary role in $(\infty,1)$-category theory and many important results (such as limits, adjunctions, ...) can be reduced to determining the representability of certain left fibrations. 
 	
 	We can similarly try to determine when a coCartesian fibration is representable by a simplicial object. While it is theoretically possible to study such coCartesian fibrations using the marked simplicial approach (as has been done in \cite{stenzel2020comprehension}), the complete Segal approach is perfectly tailored to tackle such questions. The study of such representable coCartesian fibrations deserves its own attention and hence is part of a follow up to this paper \cite{rasekh2017cartesian}.

 	\item {\bf Fibrations of $(\infty,n)$-Categories:}
 	The same way that $(\infty,1)$-category theory has led to a precise notion of ``weak $1$-categories", the development of $(\infty,n)$-categories is helping us conceptualize {\it weak $n$-categories}. Though in its early stages it has already contributed to the advancement of topological field theories \cite{lurie2009cobordism,calaquescheimbauer2019cobordism}, derived algebraic geometry \cite{gaitsgoryrozenblyum2017dagI,gaitsgoryrozenblyum2017dagII} and $(\infty,1)$-category theory itself \cite{lurie2009goodwillie}. Further applications and studies require a good theory of fibrations. 
 	
    Some common models of $(\infty,n)$-categories, such as $\Theta_n$-spaces \cite{rezk2010thetanspaces} and $n$-fold complete Segal spaces \cite{barwick2005nfoldsegalspaces} are in fact direct (and equivalent \cite{bergnerrezk2013comparisoni,bergnerrezk2020comparisonii}) generalizations of complete Segal spaces. As the complete Segal object to fibrations is inherently inductive, it opens the possibility of defining fibrations for $(\infty,n)$-categories by simply choosing appropriate $\Theta_n$-diagrams or $n$-fold simplicial diagrams in left fibrations.
 \end{enumerate}

\subsection{Relation to Other Work} 
This paper is the first part of a three-paper series which introduces the bisimplicial approach to Cartesian fibrations: 
\begin{enumerate}
	\item {\bf Cartesian Fibrations of Complete Segal Spaces}
	\item {\bf Quasi-Categories vs. Segal Spaces: Cartesian Edition} \cite{rasekh2021cartfibmarkedvscso}
	\item {\bf Cartesian Fibrations and Representability} \cite{rasekh2017cartesian}
\end{enumerate}
In particular, the second paper proves that the approach here coincides with the approach via marked simplicial sets. 
The third paper gives an application of the bisimplicial approach to the study of representable Cartesian fibrations. 

\subsection{Acknowledgments} \label{Subsec Acknowledgements}
I want to thank my advisor Charles Rezk who suggested this topic to me.
I also want to thank the referee of the paper \cite{rasekh2017left} for suggestions that have also resulted in many improvements of this work.

\section{Reviewing Concepts} \label{Sec Reminders Notation}
 In this section we review some basic concepts regarding model categories, simplicial spaces, complete Segal spaces, left fibrations and bisimplicial spaces that we will need in the coming sections. 
 
 \subsection{Model Categories} \label{subsec:model categories}
 We will use the language of model categories throughout and so use results from \cite{hovey1999modelcategories,hirschhorn2003modelcategories,lurie2009htt,joyaltierney2007qcatvssegal}.
 Here we will only state few results explicitly.
 
 \begin{remone} \label{rem:jt calculus}
 	Recall a model structure $\cM$ on a category $\C$ is called {\it compatible with Cartesian closure} if for cofibrations $i,j$ and fibration $p$, the {\it pushout-product} $i \square j$  is a cofibration and the {\it pullback-exponential} $\exp{i}{p}$ is a fibration, which is trivial if either maps involved are trivial. 
 \end{remone}
  
  For more details pushout products and pullback exponentials and their interaction (also known as {\it Joyal-Tierney calculus}) see the original source \cite[Section 7]{joyaltierney2007qcatvssegal} or \cite[Subsection 2.1]{rasekh2017left}. We also need a result guaranteeing that Bousfield localizations preserve Quillen equivalences.
  
 \begin{theone} \label{the:localization of Quillen equiv}
 	Let $\C$ and be $\D$ two categories and $\cM$ and $\N$ two simplicial, combinatorial model structures such that the cofibrations are inclusions in $\C$ and $\D$ respectively.
 	Moreover, 
 	\begin{center}
 		\adjun{\C^\cM}{\D^\N}{F}{G}
 	\end{center}
 	be a simplicial Quillen adjunction (equivalence) of model structures and $S$ a set of cofibrations in $\cM$. 
 	Then we get a Quillen adjunction (equivalence)
 	\begin{center}
 		\adjun{\C^{\cM_S}}{\D^{\N_{F(S)}}}{F}{G}
 	\end{center}
 	where the left hand side is the localized model structure with respect to $S$ and the right hand side has been localized with respect to $F(S)$.
 \end{theone}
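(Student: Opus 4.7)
The plan is to handle the Quillen adjunction and Quillen equivalence claims in turn, exploiting the two facts that left Bousfield localization leaves cofibrations untouched and characterizes new fibrant objects via local mapping-space conditions which the simplicial adjunction translates cleanly between $\cM$ and $\N$.

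I will first establish the Quillen adjunction. Since the cofibrations of $\cM_S$ (respectively $\N_{F(S)}$) coincide with those of $\cM$ (respectively $\N$), the functor $F$ automatically preserves cofibrations between the localized structures. By the standard criterion for a Quillen pair landing in a left Bousfield localization (Hirschhorn's Theorem 3.3.20), it then suffices to check that $G$ sends fibrant objects of $\N_{F(S)}$ to fibrant objects of $\cM_S$. A fibrant object of $\N_{F(S)}$ is, by definition, an $F(S)$-local fibrant $Y$ in $\N$, i.e., a fibrant object such that $\Map_\N(F(B),Y) \to \Map_\N(F(A),Y)$ is a Kan equivalence for every $s\colon A\to B$ in $S$. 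Applying the simplicial adjunction $\Map_\N(F(-),Y)\cong \Map_\cM(-,G(Y))$ converts this condition into $\Map_\cM(B,G(Y)) \to \Map_\cM(A,G(Y))$ being a Kan equivalence, i.e.\ $G(Y)$ is $S$-local. Since $G$ already preserves fibrancy in the unlocalized setting, $G(Y)$ is $S$-local and fibrant in $\cM$, hence fibrant in $\cM_S$.

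When the original adjunction is a Quillen equivalence, I will upgrade the previous step by passing to homotopy categories. The derived adjunction $\mathbb{L}F \dashv \mathbb{R}G$ is then an equivalence $\Ho(\cM)\simeq \Ho(\N)$, and the homotopy categories of the localized structures sit as reflective full subcategories $\Ho(\cM_S)\hookrightarrow \Ho(\cM)$ and $\Ho(\N_{F(S)})\hookrightarrow \Ho(\N)$ spanned by the local objects. The first step already shows $\mathbb{R}G$ restricts to a functor $\Ho(\N_{F(S)}) \to \Ho(\cM_S)$. For the reverse direction, I will argue that if $X$ is $S$-local then for every $s\colon A\to B$ in $S$ the derived simplicial adjunction, together with the natural equivalence $\mathbb{R}G\,\mathbb{L}F \simeq \id$ supplied by the Quillen equivalence, yields a chain
\[
\Map^h(F(B),\mathbb{L}F(X)) \simeq \Map^h(B,\mathbb{R}G\,\mathbb{L}F(X)) \simeq \Map^h(B,X),
\]
Kan equivalent to $\Map^h(A,X)$ by $S$-locality of $X$; so $\mathbb{L}F(X)$ is $F(S)$-local. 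The equivalence $\Ho(\cM)\simeq \Ho(\N)$ therefore identifies the two reflective subcategories, and the Quillen adjunction established in the first step is in fact a Quillen equivalence.

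The main obstacle will not be any one calculation but the bookkeeping required to ensure that $\cM_S$ and $\N_{F(S)}$ genuinely exist and that fibrancy in each admits the local-object description used above. The assumption that cofibrations are inclusions, combined with combinatoriality, supplies left properness and hence the existence of the Bousfield localizations, while the simplicial enrichment is what makes the mapping-space translation across the adjunction compatible with Kan equivalences. With those foundations in place, both halves of the argument reduce to formal manipulations of the simplicial adjunction and the characterization of local objects via mapping spaces.
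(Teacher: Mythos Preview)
Your Quillen adjunction argument is essentially identical to the paper's: both reduce to checking that $G$ preserves fibrant objects and translate the locality condition through the simplicial adjunction isomorphism $\Map_\D(F(-),Y)\cong\Map_\C(-,G(Y))$.

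For the Quillen equivalence step you take a genuinely different route. The paper works at the model-category level: since all objects are cofibrant the derived counit agrees with the ordinary counit and is already an $\N$-equivalence, hence an $\N_{F(S)}$-equivalence; for the derived unit the paper takes an $\cM_S$-bifibrant $X$, shows that an ordinary $\N$-fibrant replacement $R(F(X))$ is already $\N_{F(S)}$-fibrant (using the observation that $G(R(F(X)))\simeq X$ is $S$-local, together with the converse implication noted in the adjunction step), and then reads off the derived unit as the unlocalized one. Your argument instead passes to homotopy categories, identifies $\Ho(\cM_S)$ and $\Ho(\N_{F(S)})$ with the reflective full subcategories of local objects, and shows that the equivalence $\Ho(\cM)\simeq\Ho(\N)$ restricts to them because both $\mathbb{L}F$ and $\mathbb{R}G$ preserve locality. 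This is clean and conceptual, though it leaves implicit the (routine) check that the restricted equivalence really is the derived adjunction of the localized Quillen pair; the paper's hands-on argument makes that identification explicit at the cost of a slightly more delicate fibrant-replacement step.
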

 
 \begin{proof}
 	First we assume $(F,G)$ is a Quillen adjunction between the $\cM$ and $\N$ model structure and prove it is a Quillen adjunction between the $\cM_S$ and $\N_{F(S)}$ model structure. We know that $F$ preserves cofibrations, hence, by \cite[Corollary A.3.7.2]{lurie2009htt} it suffices to check that the right adjoint $G$ preserves fibrant objects. 
 	Let $Y$ in $\D$ be $\N_{F(S)}$-fibrant. Then $GY$ is $\cM$-fibrant and so we only need to prove that for all maps $f: A \to B$ in $S$
 	\begin{equation}\label{eq:appendix eq}
 		f^*:\Map_\C(B,GY) \to \Map_\C(A,GY)
 	\end{equation}
 	is a Kan equivalence. By adjunction this is equivalent to 
 	\begin{equation} \label{eq:appendix eqtwo}
 		F(f)^*: \Map_\D(FB,Y) \to \Map_\D(FA,Y)
 	\end{equation}
 	being an equivalent, which holds by assumption.
 	Notice, we can use the same argument to deduce that if an object $Y$ in $\D$ is $\N$-fibrant, such that $G(Y)$ is $\cM_S$-fibrant, then $Y$ is in fact $\N_{F(S)}$-fibrant. Indeed, $\N_{F(S)}$-fibrancy implies the map \ref{eq:appendix eq} is an equivalence which implies that \ref{eq:appendix eqtwo} is an equivalence giving us the desired result.
 	
 	Next we want to prove that if $(F,G)$ is a Quillen equivalence between the $\cM$ and $\N$-model structures then it is also a Quillen equivalence between the $\cM_S$ and $\N_{F(S)}$ model structure. 
 	First, observe the derived counit map is an equivalence. 
 	Indeed, all objects are cofibrant, which means the derived counit map is just the counit map, which by assumption is an equivalence in $\N$ and hence in $\N_{F(S)}$.
 	
 	Next we show the derived unit map is an equivalence. Let $X$ be an $\cM_{F(S)}$-bifibrant object in $\C$. Let $R(F(X))$ be an $\N$-fibrant replacement of $F(X)$. Then $R(F(X))$ is in fact $\N_{F(S)}$-fibrant and hence an $\N_{F(S)}$-fibrant replacement. 
 	Indeed, by the previous paragraph it suffices to prove that $G(R(F(X))$ is $\cM_S$-fibrant. However, as $(F,G)$ is a Quillen equivalence between the $\cM$ and $\N$ model structure, it is equivalence to $X$ (via the derived unit map) and hence is $\cM_S$-fibrant by assumption. Hence $X \to G(R(F(X)))$ is in fact the derived unit map in the $\cM_S$ model structure. By assumption it is an $\cM$-equivalence and so it is also an $\cM_S$-equivalence, finishing the proof.
 \end{proof}
  
 \subsection{Simplicial Sets} \label{Subsec Simplicial Sets}
 $\s$ will denote the category of simplicial sets, which we will call {\it spaces}.
 We will use the following notation with regard to spaces:
 \begin{enumerate} 
  \item $\Delta$ is the indexing category with objects posets $[n] =  \{ 0,1,...,n \} $ and mappings maps of posets.
  \item \label{item:morphism notation}We will denote a morphism $[n] \to [m]$ by a sequence of numbers $\ordered{a_0, ... ,a_n}$, where $a_i$ is the image of $i \in [n]$.
  \item $\Delta[n]$ denotes the simplicial set representing $[n]$ i.e. $\Delta[n]_k = \Hom_{\Delta}([k], [n])$. 
  \item $\partial \Delta[n]$ denotes the boundary of $\Delta[n]$ i.e. the largest sub-simplicial set which does not include $id_{[n]}: [n] \to [n]$.
  \item \label{item:Jn} Let $I[l]$ be the category with $l$ objects and one unique isomorphisms between any two objects.
  Then we denote the nerve of $I[l]$ as $J[l]$. 
  It is a Kan fibrant replacement of $\Delta[l]$ and comes with an inclusion $\Delta[l] \rightarrowtail  J[l]$,
  which is a Kan equivalence. 
 \end{enumerate}

 \subsection{Simplicial Spaces} \label{Subsec Simplicial Spaces}  
 $\ss = \Fun(\Delta^{op}, \s) $ denotes the category of simplicial spaces (bisimplicial sets). 
 We have the following basic notations with regard to simplicial spaces:
 \begin{enumerate}
  \item \label{item:embed} We embed the category of spaces inside the category of simplicial spaces as constant simplicial spaces 
  (i.e. the simplicial spaces $S$ such that $S_n = S_0$ for all $n$ and all simplicial operator maps are identities). 
  \item More generally we say a simplicial space is {\it homotopically constant} if all simplicial operator maps $X_n \to X_m$
  are equivalences (and in particular $X_n$ are all equivalent to $X_0$).
  \item \label{item:Fn} Denote $F(n)$ to be the simplicial space defined as $F(n)_{kl} = \Delta[n]_k = \Hom_{\Delta}([k],[n])$. 
  Moreover  $\partial F[n]$ denotes the boundary of $F(n)$. 
  \item \label{item:En} Denote $E(n)$ to be the simplicial space defined as $E(n)_{kl} = J[n]_k$, as defined in \setref{item:Jn}.
  \item The category $\ss$ is enriched over spaces
  $$\Map_{\ss}(X,Y)_n = \Hom_{\ss}(X \times \Delta[n], Y).$$
  \item The category $\ss$ is also enriched over itself
  $$(Y^X)_{kn} = \Hom_{\ss}(X \times F(n) \times \Delta[l], Y).$$
  \item By the Yoneda lemma, for a simplicial space $X$ we have a bijection of spaces
  $$X_n \cong \Map_{\ss}(F(n),X).$$
  \end{enumerate}

  \subsection{Reedy Model Structure} \label{Subsec Reedy Model Structure}
 The category of simplicial spaces has a Reedy model structure \cite{reedy1974modelstructure}, which is defined as follows:
 \begin{itemize}
  \item[(F)] A map $f: Y \to X$ is a (trivial) Reedy fibration if for each $n \geq 0$ the following map of spaces is a (trivial) Kan fibration
  $$ \Map_{\ss}(F(n),Y) \to \Map_{\ss}(\partial F(n),Y) \underset{\Map_{\ss}(\partial F(n), X)}{\times} \Map_{\ss}(F(n), X).$$
  \item[(W)] A map $f:Y \to X$ is a Reedy equivalence if it is a level-wise Kan equivalence.
  \item[(C)] A map $f:Y \to X$ is a Reedy cofibration if it is a monomorphism.
 \end{itemize}
 The Reedy model structure is very helpful as it enjoys many features that can help us while doing computations.
 In particular, it is {\it combinatorial}, {\it simplicial} and {\it proper}. Moreover, it is also {\it compatible with Cartesian closure} (\cref{rem:jt calculus}). These properties in particular imply that we can apply Bousfield localizations to the Reedy model structure. 
 See \cite{hirschhorn2003modelcategories} for more details.
 
 \subsection{Complete Segal Spaces} \label{Subsec Complete Segal Spaces}
 The Reedy model structure can be localized such that it models $(\infty,1)$-categories \cite{rezk2001css}.
 This is done in two steps.
 First we define {\it Segal spaces}. A Reedy fibrant simplicial space $X$ is called a {\it Segal space} if the map
 $$ \Map(F(n),X) \xrightarrow{\ \ \simeq \ \ } \Map(G(n),X)$$ 
 induced by the {\it spine inclusion} $G(n) \hookrightarrow F(n)$ is a Kan equivalence for $n \geq 2$ \cite[Section 5]{rezk2001css}.
 Segal spaces come with a model structure.
 
 \begin{theone} \label{The Segal Space Model Structure}
  \cite[Theorem 7.1]{rezk2001css}
  There is a unique combinatorial left proper simplicial model structure on the category $\ss$ of simplicial spaces
  called the Segal space model category structure, and denoted $\ss^{Seg}$, with the following properties.
  \begin{enumerate}
   \item The cofibrations are the monomorphisms.
   \item The fibrant objects are the Segal spaces.
   \item The weak equivalences are the maps $f$ such that $\Map_{\ss}(f, W)$ is
   a weak equivalence of spaces for every Segal space $W$.
  \end{enumerate}
 \end{theone}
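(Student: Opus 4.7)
The plan is to construct $\ss^{Seg}$ as a left Bousfield localization of the Reedy model structure on $\ss$ described in \cref{Subsec Reedy Model Structure}. As recalled there, the Reedy model structure is combinatorial, left proper, and simplicial, with cofibrations being exactly the monomorphisms. These are precisely the hypotheses needed to invoke the standard existence theorem for left Bousfield localization (for instance from \cite{hirschhorn2003modelcategories}).

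First I would fix the set of maps to localize at, namely the set of spine inclusions $S = \{G(n) \hookrightarrow F(n) : n \geq 2\}$, where $G(n) = F(1)\cup_{F(0)}\cdots\cup_{F(0)}F(1)$ is the spine. Applying left Bousfield localization at $S$ produces a model structure on $\ss$ whose cofibrations remain the monomorphisms (verifying (1)), which is automatically combinatorial, left proper, and simplicial, and whose weak equivalences are by construction the $S$-local equivalences. By the standard characterization of $S$-local equivalences, these are the maps $f$ such that $\Map_{\ss}(f, W)$ is a Kan equivalence for every $S$-local fibrant object $W$, which is condition (3) provided we can identify $S$-local fibrant objects with Segal spaces.

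For (2), I would unpack the definition of $S$-local: $W$ is $S$-local and Reedy fibrant if and only if for every $n \geq 2$ the map $\Map_{\ss}(F(n),W) \to \Map_{\ss}(G(n),W)$ is a Kan equivalence. This is exactly the Segal condition as stated in \cref{Subsec Complete Segal Spaces}, so the fibrant objects in the localized structure are precisely the Segal spaces. The main technical step here is ensuring that the $S$-local fibrant characterization gives literally the Segal condition; this relies on the fact that Reedy fibrant objects are stable under the relevant mapping space constructions, which follows from the Reedy model structure being compatible with Cartesian closure (\cref{rem:jt calculus}).

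Finally, uniqueness follows from the general principle that a model structure on $\ss$ is determined by its cofibrations together with its fibrant objects (equivalently, by cofibrations together with weak equivalences): given (1) and (2), the trivial cofibrations are the cofibrations with the left lifting property against Segal-space-valued fibrations between Segal spaces, which pins down the whole factorization data. The only real obstacle is checking that $S$-locality of a Reedy fibrant object coincides precisely with the Segal condition for all $n \geq 2$ simultaneously rather than only for the specific inclusions listed; but this follows because the derived mapping space from $F(n)$ into a Reedy fibrant object equals the underived one, and similarly for $G(n)$, so the two notions match term by term.
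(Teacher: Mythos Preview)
Your argument is correct, but note that the paper does not actually prove this statement: it is quoted verbatim as \cite[Theorem 7.1]{rezk2001css} and given no proof in the present text, since it is part of the background review in \cref{Subsec Complete Segal Spaces}. Your proposed proof via left Bousfield localization of the Reedy model structure at the spine inclusions is precisely Rezk's original argument, so there is nothing to compare beyond observing that you have supplied the standard proof where the paper merely cites it.
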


 Segal spaces do not give us a model of $(\infty,1)$-categories. For that we need {\it complete Segal spaces}.
 A Segal space is called a {\it complete Segal space} if the map 
 $$\Map(E(1),W) \to \Map(F(0),W)$$
 induced by the inclusion $F(0) \to E(1)$ (\spaceref{item:En}) is a Kan equivalence.
 Complete Segal spaces come with their own model structure, the {\it complete Segal space model structure}.
 
 \begin{theone} \label{The Complete Segal Space Model Structure}
  \cite[Theorem 7.2]{rezk2001css}
  There is a unique combinatorial left proper simplicial model structure on the category $\ss$ of simplicial spaces, 
  called the complete Segal space model category structure, and denoted $\ss^{CSS}$, with the following properties.
  \begin{enumerate}
   \item The cofibrations are the monomorphisms.
   \item The fibrant objects are the complete Segal spaces.
   \item The weak equivalences are the maps $f$ such that $\Map_{\ss}(f, W)$ is a weak equivalence of spaces for every complete Segal space $W$.
  \end{enumerate}
 \end{theone}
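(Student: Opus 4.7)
The plan is to obtain $\ss^{CSS}$ as a left Bousfield localization of the Segal space model structure $\ss^{Seg}$ (\cref{The Segal Space Model Structure}) at the single map $F(0) \to E(1)$. Since $\ss^{Seg}$ is already known to be combinatorial, left proper, and simplicial, the general machinery of left Bousfield localization (as in \cite{hirschhorn2003modelcategories}) applies, and produces a new combinatorial, left proper, simplicial model structure on $\ss$ with the same cofibrations (namely monomorphisms). This immediately gives property (1).

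For property (2), I would recall that the fibrant objects of a left Bousfield localization at a set $S$ are precisely the $S$-local $\ss^{Seg}$-fibrant objects, i.e.\ Segal spaces $W$ for which $\Map_{\ss}(B,W)\to\Map_{\ss}(A,W)$ is a Kan equivalence for every map $A\to B$ in $S$. Applied to the single map $F(0)\to E(1)$, this reads as the requirement that $\Map_{\ss}(E(1),W)\to\Map_{\ss}(F(0),W)$ is a Kan equivalence, which is exactly the completeness condition defining a complete Segal space. Property (3) is then the standard identification of the local equivalences: a map $f$ is a weak equivalence in the localized structure if and only if $\Map_{\ss}(f,W)$ is a Kan equivalence for every $S$-local fibrant object $W$, i.e.\ every complete Segal space.

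Uniqueness follows because a model structure on a bicomplete category is determined by its class of cofibrations together with its class of fibrant objects (equivalently, its class of weak equivalences): any two model structures satisfying (1)--(3) share the same cofibrations and the same fibrant objects, hence coincide. One subtlety is to verify that the localized structure really is simplicial and left proper; both properties are preserved by left Bousfield localization of a simplicial, left proper, combinatorial model structure, so this is automatic once the source $\ss^{Seg}$ has been shown to satisfy the requisite hypotheses.

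The main obstacle, and the step that requires actual work in Rezk's original treatment, is checking that $S$-locality with respect to $F(0)\to E(1)$ really matches the intuitive completeness condition on the $0$-simplices versus ``homotopy equivalences'' of a Segal space $W$, rather than just being a formal mapping-space condition. This amounts to identifying $\Map_{\ss}(E(1),W)$ with the subspace $W_{hoequiv}\subseteq W_1$ of homotopy equivalences, using the explicit description of $E(1)$ as the nerve of the walking isomorphism; once this identification is established, the rest of the argument is routine Bousfield-localization bookkeeping.
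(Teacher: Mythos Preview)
Your outline is correct and matches the standard approach: the complete Segal space model structure is obtained as a left Bousfield localization of the Segal space model structure at the map $F(0)\to E(1)$, and all three properties follow from the general theory of Bousfield localization. Your remarks on uniqueness and on the identification of $\Map_{\ss}(E(1),W)$ with $W_{hoequiv}$ are also to the point.

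However, there is nothing to compare against here. The paper does not supply its own proof of this theorem: it appears in the ``Reviewing Concepts'' section and is simply quoted from \cite[Theorem 7.2]{rezk2001css}. The statement is background material, and the author relies on Rezk's original argument rather than reproving it. So your proposal is not being measured against any proof in the paper; what you have written is a reasonable sketch of Rezk's argument, but the paper itself treats the result as a black box.
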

 
 A complete Segal space is a model for an $(\infty,1)$-category.
 For a better understanding of complete Segal spaces see \cite[Section 6]{rezk2001css} and for a comparison with other models see 
 \cite{joyaltierney2007qcatvssegal,bergner2010survey}.

\subsection{A Reminder on the Covariant Model Structure} \label{Subsec A Reminder on the Covariant Model Structure}
This section will serve as a short reminder on the covariant model structure and all of its relevant definitions and theorems.
For more details see \cite{rasekh2017left}, where all these definitions and theorems are discussed in more detail.

A Reedy fibration $p: L \to X$ ($q: R \to X$) is called a {\it left fibration} ({\it right fibration}) if the following is a homotopy pullback square (using the notation \setref{item:morphism notation})
 \begin{center}
	\pbsq{L_n}{X_n}{L_0}{X_0}{\ordered{0}^*}{p_n}{p_0}{\ordered{0}^*}, \
	\pbsq{R_n}{X_n}{R_0}{X_0}{\ordered{n}^*}{q_n}{q_0}{\ordered{n}^*}.
\end{center}
 
 Left fibrations (right fibrations) come with a model structure that has many desirable properties:
 There is unique left proper combinatorial simplicial model structure on the over category $\ss_{/X}$, called the {\it covariant model structure} ({\it contravariant model structure}). 
 Here we will only state the relevant properties of the covariant model structure:
 \begin{enumerate}
 	\item \label{leftitem:left fib} \cite[Theorem 3.12]{rasekh2017left} The fibrant object are left fibrations.
 	\item \label{leftitem:local left fib} \cite[Lemma 3.10]{rasekh2017left} For a Reedy fibration $p: Y \to X$, the following are equivalent
 	\begin{enumerate}
 		\item[(I)] $p$ is a left fibration.
 		\item[(II)] For every map $\sigma: F(n) \times \Delta[l] \to X$ the induced map 
 		$\sigma^*Y \to F(n) \times \Delta[l]$ is a left fibration.
 		\item[(III)] For every map $\sigma: F(n) \to X$ the induced map 
 		$\sigma^*Y \to F(n)$ is a left fibration.
 	\end{enumerate}
    \item \label{leftitem:equiv left fib} \cite[Theorem 4.34]{rasekh2017left} For a map of left fibrations $g: L \to M$ the following are equivalent:
      \begin{enumerate}
    	\item[(I)] $g: L \to M$ is a Reedy equivalence.
    	\item[(II)] $g_0: L_0 \to M_0$ is a Kan equivalence.
    	\item[(III)] For every $x: F(0) \to X$, $F(0) \times_{X} Y \to F(0) \times_{X} Z$ is a diagonal equivalence of simplicial spaces.
    \end{enumerate}
 	\item \label{leftitem:recognition} \cite[Theorem 4.39]{rasekh2017left} A map $f$ is a covariant equivalence if and only if for every map $x:F(0) \to X$, if the diagonal of the induced map 
 	$$Y \underset{X}{\times} R_x \to Z \underset{X}{\times} R_x$$
 	is a Kan equivalence. Here $R_x$ is the right fibrant replacement of the map $x$ over $X$.
	\item \label{leftitem:cov to diag} \cite[Theorem 3.17]{rasekh2017left} The following adjunction 
	\begin{center}
		\adjun{(\ss_{/X})^{Cov}}{(\ss_{/X})^{Diag}}{id}{id}
	\end{center}
	is a Quillen adjunction, which is a Quillen equivalence if $X$ is homotopically constant.
	Here the left hand side has the covariant model structure and the right hand side has the induced diagonal model structure.
 	\item \label{leftitem:exponent} \cite[Theorem 4.28]{rasekh2017left} Let $p:R \to X$ be a right fibration. The following is a Quillen adjunction:
 	\begin{center}
 		\adjun{(\ss_{/X})^{Cov}}{(\ss_{/X})^{Cov}}{p_!p^*}{p_*p^*}.
 	\end{center}
 	\item \label{leftitem:pushout product} \cite[Lemma 3.25]{rasekh2017left} Let $i:A \to B$ and $j: C \to D$ be cofibrations of simplicial spaces over $X$. If $i$ or $j$ are trivial cofibrations in the covariant model structure, then $i \square j$ is a trivial cofibration as well.
 	\item \label{leftitem:CSS invariant} \cite[Theorem 5.1]{rasekh2017left} Let $f: X \to Y$ be a map of simplicial spaces. Then the adjunction
 	\begin{center}
 		\adjun{(\ss_{/X})^{Cov}}{(\ss_{/Y})^{Cov}}{f_!}{f^*}
 	\end{center}
 	is a Quillen adjunction, which is a Quillen equivalence whenever $f$ is a CSS equivalence.
 	\item \label{leftitem:CSS fib} \cite[Theorem 5.11]{rasekh2017left} The following is a Quillen adjunction
 		\begin{center}
 		\adjun{(\ss_{/X})^{CSS}}{(\ss_{/X})^{Cov}}{id}{id}
 	\end{center}
 where the left hand side has the induced CSS model structure and the right hand side has the covariant model structure.
   \item \label{leftitem:groth} \cite[Theorem 4.18]{rasekh2017left} For a small category $\C$ there are Quillen equivalences 
    \begin{center}
   	 \begin{tikzcd}[row sep=0.5in, column sep=0.9in]
   		\Fun(\C,\s^{Kan})^{proj} \arrow[r, shift left = 1.8, "\sint_\C"] & 
   		(\ss_{/N\C})^{Cov} \arrow[l, shift left=1.8, "\sH_\C", "\bot"'] \arrow[r, shift left=1.8, "\sbT_\C"] &
   		\Fun(\C,\s^{Kan})^{proj} \arrow[l, shift left=1.8, "\sbI_\C", "\bot"']
   	 \end{tikzcd}
    \end{center}
     where the middle one has the covariant model structure the other ones have the projective model structure.
   	\item \label{leftitem:smooth maps} \cite[Corollary 5.18]{rasekh2017left}
   	Let $f:X \to Y$ be a CSS equivalence and $p:L \to Y$ a left fibration over $Y$. Then the map $f^*L \to L$ is also a CSS equivalence.
 \end{enumerate}

  Left fibrations model maps into spaces.
  Our overall goal in this paper is it to generalize all aforementioned results to the level of presheaves into higher categories.
  However, before we can do so we have to expand our playing field, which leads us to the next section.

\subsection{Bisimplicial Spaces} 
  $\sss = \Fun(\Delta^{op}, \ss) $ denotes the category of bisimplicial spaces (trisimplicial sets). 
 We have the following basic notations with regard to bisimplicial spaces:
 \begin{enumerate}
  \item Denote by $F(k,n)$ the bisimplicial space defined as
  $$F(k,n)_{abc} = \Hom_{\Delta}([a],[k]) \times \Hom_{\Delta}([b],[n]).$$
  Note in particular we have bijection $F(k,n) \cong F(k,0) \times F(0,n)$.
 \item Let $\partial F(k,n) \to F(k,n)$ denote the map  
 $(\partial F(k,0) \to F(k,0)) \square (\partial F(0,n) \to F(0,n))$, which we consider the boundary of $F(k,n)$. 
  \item The category $\sss$ is enriched over spaces
  $$\Map_{\sss}(X,Y)_n = \Hom_{\sss}(X \times \Delta[n], Y).$$
  \item The category $\sss$ is also enriched over itself
  $$(Y^X)_{knl} = \Hom_{\sss}(X \times F(k,n) \times \Delta[l], Y).$$
  \item By the Yoneda lemma, for a simplicial space $X$ we have a bijection of spaces
  $$X_{kn} \cong \Map_{\ss}(F(k,n),X).$$
  \end{enumerate}

\subsection{Reedy Model Structure on Bisimplicial Spaces} \label{subsec:biReedy}
  The category of bisimplicial spaces has a Reedy model structure \cite{reedy1974modelstructure}, which is defined as follows:
 \begin{itemize}
  \item[(F)] A map $f: Y \to X$ is a (trivial) Reedy fibration if for each $k, n \geq 0$ the following map of spaces 
  $$ \Map_{\sss}(F(k,n),Y) \to \Map_{\sss}(\partial F(k,n),Y) \underset{\Map_{\sss}(\partial F(k,n), X)}{\times} \Map_{\sss}(F(k,n), X)$$
  is a (trivial) Kan fibration
  \item[(W)] A map $f:Y \to X$ is a Reedy equivalence if it is a level-wise Kan equivalence.
  \item[(C)] A map $f:Y \to X$ is a Reedy cofibration if it is an inclusion.
 \end{itemize}
 The Reedy model structure is {\it combinatorial}, {\it simplicial} and {\it proper}. 
 Moreover, it is also {\it compatible with Cartesian closure} (\cref{rem:jt calculus}).
 These properties in particular imply that we can apply Bousfield localizations to the Reedy model structure. 
 See \cite{hirschhorn2003modelcategories} for more details.
   In order to avoid confusion we will call the Reedy model structure on bisimplicial spaces, the biReedy model structure.  

 \subsection{Diagonal Reedy Model Structures}  \label{Subsec Reedy Diagonal and Reedy Model Structures}
 In \cite[Subsection 2.5]{rasekh2017left} we studied important localizations of the Reedy model
 structure on simplicial spaces that are Quillen equivalent to the Kan model structure. 
 In a similar manner, we need localizations of the biReedy model structure that are Quillen equivalent to the Reedy model structure, 
 so we will introduce them right here. We will only state the relevant notation and leave the theorems without proofs.
 
  \begin{notone} \label{not:bunch of functors simplicial}
  Let
  $$\Diag_1 : \Delta \times \Delta \to \Delta \times \Delta \times \Delta$$
  be the functor given by
  $\phiDiag([n],[l]) = ([n],[l],[l])$. 
  Similarly, for $j = 1,2$ let
  $$p_1, p_2: \Delta \times \Delta \times \Delta \to  \Delta \times \Delta$$
  be given by $p_1([k],[n],[l]) = ([n],[l])$ and $p_2([k],[n],[l]) = ([k],[l])$. 
 \end{notone}

  We want show that $\sss$ has a model structure such that $((\Diag_1)^*,(\Diag_1)_*)$ becomes a Quillen equivalence.

 \begin{theone} \label{The Diagonal Reedy Model Structure}
  There is a unique, cofibrantly generated, simplicial model structure on $\sss$, 
  called the {\it diagonal Reedy Model Structure} and denoted by $\sss^{DiagRee}$, with the following specifications.
  
  \begin{itemize}
   \item[C] A map $f:X \to Y$ is a cofibration if it is an inclusion.
   \item[W] A map $f: X \to Y$ is a weak equivalence if $(\Diag_1)^*(f): (\Diag_1)^*(X) \to (\Diag_1)^*(Y)$ is a Reedy equivalence.
   \item[F] A map $f:X \to Y$ is a fibration if it satisfies the right lifting condition for trivial cofibrations.
  \end{itemize}
  In particular, an object $W$ is fibrant if it is biReedy fibrant and the map $(p_1)_* (p_1)^*W \to W$ is a biReedy equivalences.
 \end{theone}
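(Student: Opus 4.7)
The plan is to construct $\sss^{DiagRee}$ as a left Bousfield localization of the biReedy model structure. Since Subsection \ref{subsec:biReedy} establishes that $\sss^{biRee}$ is combinatorial, simplicial, left proper, and compatible with Cartesian closure, the standard existence theorem for simplicial left Bousfield localizations applies, so it suffices to identify an appropriate set $S$ of maps and check that localizing at $S$ yields the required weak equivalences and fibrant objects.

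First, I would take $S$ to be the set of ``collapsing maps'' in the first simplicial direction,
\[
S = \bigl\{\, F(k,n) \times \Delta[l] \to F(0,n) \times \Delta[l] \;:\; k,n,l \geq 0 \,\bigr\},
\]
together with the degeneracies needed to make the set small. Bousfield localization at $S$ produces a cofibrantly generated simplicial model structure whose cofibrations are still the monomorphisms and whose fibrations are characterized by the right lifting property against trivial cofibrations, matching the required specification.

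Second, I would show that the $S$-local equivalences coincide with the $(\Diag_1)^{*}$-Reedy equivalences. Every map in $S$ maps to an isomorphism under $(\Diag_1)^{*}$, so by the two-out-of-three closure of the Reedy equivalences and the universal property of the localization, every $S$-local equivalence is a $(\Diag_1)^{*}$-Reedy equivalence. For the converse, using the adjunction $((\Diag_1)^{*},(\Diag_1)_{*})$ together with the fact that the generators of $S$ are precisely those maps that become identities on the diagonal, one shows that $(\Diag_1)^{*}$-Reedy equivalences are detected by mapping into $S$-local biReedy fibrant objects, hence are $S$-local equivalences.

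Third, I would characterize the fibrant objects. Unpacking Yoneda (using \spaceref{item:Fn} and the enrichment described in Subsection 2.7) and the Joyal--Tierney calculus (\cref{rem:jt calculus}), a biReedy fibrant $W$ is $S$-local iff the induced map $W_{0,n,l} \to W_{k,n,l}$ is a Kan equivalence for all $k,n,l$, i.e., $W$ is homotopy constant in the first direction. Via the adjunction $(p_1)^{*} \dashv (p_1)_{*}$ and the definition of the right Kan extension $(p_1)_{*}$, this homotopy constancy is precisely the condition that the counit $(p_1)_{*}(p_1)^{*} W \to W$ is a biReedy equivalence.

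The main obstacle will be step two: showing the two notions of equivalence agree beyond the fibrant objects. The clean way is to reduce to the fibrant case by taking fibrant replacements in both model structures, but this requires knowing the Bousfield-localized fibrant replacement of $W$ has the same $(\Diag_1)^{*}$-image (up to Reedy equivalence) as $W$, which in turn uses the identification of fibrant objects from step three. Once this circularity is carefully sequenced — first producing $S$-local equivalences via the localization theorem, then identifying $S$-local fibrants, and finally comparing with $(\Diag_1)^{*}$-Reedy equivalences through those fibrants — the remaining verifications are routine consequences of the biReedy model structure's properties.
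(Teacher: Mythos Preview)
The paper explicitly omits this proof, saying in \cref{Subsec Reedy Diagonal and Reedy Model Structures} that ``we will only state the relevant notation and leave the theorems without proofs'' and pointing to the analogous construction in \cite[Subsection~2.5]{rasekh2017left}. Your overall strategy---realize the model structure as a left Bousfield localization of the biReedy model structure, then identify the local objects and compare equivalences---is exactly the expected one.

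There is, however, a concrete error in your choice of $S$. With $\Diag_1([n],[l]) = ([n],[l],[l])$ as in \cref{not:bunch of functors simplicial}, one has
\[
(\Diag_1)^*\bigl(F(k,n)\times\Delta[l]\bigr)_{a,b}=\Hom([a],[k])\times\Hom([b],[n])\times\Hom([b],[l]),
\]
so your map $F(k,n)\times\Delta[l]\to F(0,n)\times\Delta[l]$ becomes, at simplicial level $a$, the projection $\Hom([a],[k])\times(\cdots)\to(\cdots)$. This is \emph{not} a Kan equivalence for $k>0$, so your claim in step two that ``every map in $S$ maps to an isomorphism under $(\Diag_1)^*$'' is false and the comparison of equivalences collapses.

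The functor $(\Diag_1)^*$ diagonalizes the second and third coordinates while keeping the first, so the correct localizing set must force homotopy-constancy in the \emph{second} (the ``$n$'') direction---e.g.\ the maps $F(k,0)\times\Delta[l]\to F(k,n)\times\Delta[l]$, or simply the level-wise (in $k$) transport of the localizing maps for the diagonal model structure on $\ss$. Your choice instead forces constancy in the first direction; this matches the fibrant-object clause as literally written, but that clause contains a typo (the composite $(p_1)_*(p_1)^*W$ does not type-check for $W\in\sss$, and the projection involved should be $p_2$ rather than $p_1$). In short, you have built an internally consistent localization that agrees with the mis-stated fibrant condition but not with the weak equivalences specified via $\Diag_1$.
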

  
  These model structures all give us following long chain of Quillen equivalences.
  
 \begin{theone} \label{The Quillen equivalence of Reedy model structures}
  The following is a simplicially enriched Quillen equivalence
  \begin{center}
  	  \adjun{\sss^{DiagRee}}{\ss^{Ree}}{(\Diag_1)^*}{(\Diag_1)_*}.
  \end{center}
 \end{theone}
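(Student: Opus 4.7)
The plan is to verify $((\Diag_1)^*, (\Diag_1)_*)$ is a simplicial Quillen adjunction directly, and then promote it to a Quillen equivalence by reducing to the derived counit. The Quillen adjunction is immediate: precomposition with $\Diag_1^{op}$ sends inclusions to inclusions, so $(\Diag_1)^*$ preserves cofibrations, and by the very definition of $\sss^{DiagRee}$ in \cref{The Diagonal Reedy Model Structure} a map $f$ is a diagonal Reedy equivalence precisely when $(\Diag_1)^* f$ is a Reedy equivalence, so $(\Diag_1)^*$ preserves---and in fact reflects---weak equivalences. The simplicial enrichment of the adjunction is automatic, since $(\Diag_1)^*$ commutes with the simplicial tensor $(-) \times \Delta[n]$ (the factor $\Delta[n]$ is constant in the coordinate along which $\Diag_1$ operates).

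To upgrade this to a Quillen equivalence, I would invoke the standard criterion: every object of $\sss^{DiagRee}$ is cofibrant (cofibrations are monomorphisms and the initial object is empty), and $(\Diag_1)^*$ reflects all weak equivalences. Hence it suffices to check that for every Reedy fibrant $Y \in \ss$, the counit map $(\Diag_1)^*(\Diag_1)_* Y \to Y$ is a Reedy equivalence in $\ss$. By the pointwise formula for right Kan extensions, the left-hand side at bidegree $(n,l)$ is a limit of $Y_{a,b}$'s indexed over the comma category of factorizations $([n],[l],[l]) \to ([a],[b],[b])$, i.e., triples $([n] \to [a], [l] \to [b], [l] \to [b])$. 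The key observation is that the full subcategory of triples whose two maps $[l] \to [b]$ coincide is cofinal; combined with Reedy fibrancy of $Y$ this collapses the limit to $Y_{n,l}$, making the counit a Reedy equivalence.

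The main obstacle is this cofinality verification inside the comma category. However, the result is precisely the ``parametrized'' analogue, adding a free simplicial $k$-coordinate, of the Quillen equivalence between the diagonal Reedy model structure on bisimplicial sets and the Kan model structure on simplicial sets established in \cite[Subsection 2.5]{rasekh2017left}. The cofinality argument there applies uniformly in the extra coordinate of $\sss$, so no new combinatorial work is required beyond reducing to the earlier treatment---consistent with the paper explicitly deferring these theorems without proof (cf.\ the opening of \cref{Subsec Reedy Diagonal and Reedy Model Structures}).
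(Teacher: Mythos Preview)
Your overall strategy matches the paper's, which also offers no argument beyond declaring the proof analogous to \cite[Theorem 2.13]{rasekh2017left}. The verification that $(\Diag_1)^*$ preserves cofibrations and (by the very definition of the diagonal Reedy model structure) preserves and reflects weak equivalences is correct, as is the reduction of the Quillen equivalence to the derived counit via the standard criterion for a left Quillen functor that reflects weak equivalences between cofibrant objects.

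Your sketch of the counit computation, however, contains an actual error. The subcategory of the comma category in which the two maps involving $[l]$ agree is \emph{not} initial: any object with $\beta_1\neq\beta_2$ admits no morphism from that subcategory, so the relevant slice is empty. No appeal to Reedy fibrancy repairs a failed $1$-categorical cofinality claim. The genuine argument is homotopical: unwinding the adjunction identifies the counit at bidegree $(n,l)$ with the restriction
\[
\Hom_{\s}(\Delta[l]\times\Delta[l],\,Y_n)\ \longrightarrow\ \Hom_{\s}(\Delta[l],\,Y_n)=Y_{n,l}
\]
along the diagonal $\Delta[l]\hookrightarrow\Delta[l]\times\Delta[l]$, which is a weak equivalence because $Y_n$ is Kan (the section induced by either projection is the diagonal of the level-wise equivalence $Y_n\to Y_n^{\Delta[a]}$, hence a weak equivalence, and one concludes by $2$-out-of-$3$). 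This is exactly what the cited \cite[Theorem 2.13]{rasekh2017left} establishes one simplicial level down, so your final deferral to that source is correct even though the intermediate cofinality heuristic is not.
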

  
  The proof is analogous to the proof of \cite[Theorem 2.13]{rasekh2017left}.

\subsection{Notational Convention for Bisimplicial Functors} \label{subsec:notation bisimp}
Some functors we have defined until now will be particularly important and hence we will give them more descriptive names.
	We use the following notation for three functors $\sss \to \ss$:
	\begin{itemize}
		\item $\LFib_n = (\iF^{F(0,n)})_*$: {\it The underlying $n$-level left fibration}. In particular, if $n=0$
		we denote it by $\LFib$ and call it the {\it underlying left fibration}.
		\item $\Val_k = (\iphi^{F(k,0)})_* $: The {\it $k$-level value}. In particular, if $k=0$ we denote it by $\Val$ and call it 
		the {\it value}.
		\item $\fDiag = \phiDiag$: The {\it diagonal}.
	\end{itemize}
	On the other hand, we use the following notation for two functors $\ss \to \sss$:
	\begin{itemize}
		\item $\LEmb = (\iF)^*$: The {\it left fibration embedding}.
		\item $\VEmb = (\iphi)^*$: The {\it value embedding}.
	\end{itemize}
The terminology above is motivated by the fact that in the next section we will define a new notion of fibration $p:Y \to X$ such that $\LFib(p): \LFib(Y) \to X$ is a left fibration and $\Val(Y)$ will give us the fibers, representing the {\it values}.

\section{The Reedy Covariant Model Structure} \label{Sec The Reedy Covariant Model Structure}
In this section we generalize the covariant model structure to the category of bisimplicial spaces over a simplicial space.
This gives us a good model for maps valued in simplicial spaces and 
the room we need to further define new model structures.

 \begin{notone} \label{Rem Setting the base}
 	For the remaining sections we will fix a simplicial space $X$ and denote the bisimplicial space $\LFib(X)$simply by $X$ to simplify notation.
 \end{notone}

 \begin{defone} \label{Def Reedy left fibration}
  Let $X$ be a simplicial space. 
  We say a map of bisimplicial spaces $p:Y \to X$ is a {\it Reedy left fibration} if it is a biReedy fibration and 
  for all $k, n \geq 0$ the following is a homotopy pullback square (using the notation \setref{item:morphism notation})
  \begin{center}
   \pbsq{Y_{kn}}{Y_{k0}}{X_{n}}{X_{0}}{\ordered{0}^*}{p_{kn}}{p_{k0}}{\ordered{0}^*}.
  \end{center}
 \end{defone}
  Notice this definition is equivalent to saying that the map is a biReedy fibration and for any $k \geq 0$, $Y_k \to X$ is a left fibration.
 As in the case of left fibrations this construction comes with a model structure, the {\it Reedy covariant model structure}.
 
 \begin{theone} \label{The Reedy Covariant Model Structure}
  Let $X$ be a simplicial space considered a bisimplicial space via \cref{Rem Setting the base}. 
  There is a unique simplicial combinatorial left proper model structure on the category $\sss_{/X}$, 
  called the Reedy covariant model structure and denoted by $(\sss_{/X})^{ReeCov}$, which satisfies following conditions:
  \begin{enumerate}
   \item An object $L \to X$ is fibrant if it is a Reedy left fibration.
   \item A map is a cofibration if it is a monomorphism.
   \item A map is a weak equivalence if it is a level-wise covariant equivalence over $X$.
   \item A weak equivalence (fibration) between Reedy left fibrations is precisely level-wise Reedy equivalence (biReedy fibration).
  \end{enumerate}
 \end{theone}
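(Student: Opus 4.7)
The plan is to construct the Reedy covariant model structure as a left Bousfield localization of the slice biReedy model structure $(\sss_{/X})^{biRee}$, which by \cref{subsec:biReedy} is combinatorial, simplicial and left proper (properties inherited by slice categories), so Hirschhorn's localization machinery \cite[Theorem 4.1.1]{hirschhorn2003modelcategories} applies.

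First, I would choose the localizing set $S$. The covariant model structure on $\ss_{/X}$ is itself a left Bousfield localization of $(\ss_{/X})^{Ree}$ at a set $T$ of trivial cofibrations detecting the left fibration condition (by \leftref{leftitem:left fib}). For each $k \geq 0$ the slice evaluation functor $\Val_k : \sss_{/X} \to \ss_{/\Val_k(X)}$ admits a left adjoint $L_k$ given by left Kan extension along $\{[k]\} \hookrightarrow \Delta$; suitably translating $T$ into $\sss_{/X}$ via this adjunction (accounting for the interpretation of $X$ as a bisimplicial space from \cref{Rem Setting the base}), define $S = \bigcup_{k \geq 0} L_k(T)$.

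Applying Hirschhorn's theorem to $(\sss_{/X})^{biRee}$ at $S$ produces a combinatorial, simplicial, left proper model structure on $\sss_{/X}$ whose cofibrations remain the monomorphisms, giving (2). For (1), an object $Y \to X$ is fibrant iff it is biReedy fibrant and $S$-local; by the $L_k \dashv \Val_k$ adjunction, $S$-local means each $\Val_k(Y) = Y_k$ is $T$-local, hence a left fibration by \leftref{leftitem:left fib}, and together with biReedy fibrancy this is exactly the Reedy left fibration condition from \cref{Def Reedy left fibration}. For (4), I would invoke the standard fact that weak equivalences (resp.\ fibrations) between fibrant objects in a Bousfield localization coincide with those in the unlocalized structure, which here are level-wise Reedy equivalences (resp.\ biReedy fibrations).

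The main obstacle is (3), identifying the $S$-local equivalences with level-wise covariant equivalences. One direction is formal: a level-wise covariant equivalence $f: Y \to Z$ induces $T$-local equivalences on each $\Val_k(f)$ and hence, by adjunction, an $S$-local equivalence. For the converse, take a fibrant replacement $Y \to RY$ in the localization; then $RY$ is a Reedy left fibration, each $(RY)_k$ is covariantly fibrant, and one must argue that $Y_k \to (RY)_k$ is a covariant fibrant replacement of $Y_k$ in $\ss_{/X}$. This requires that the small object argument generated by $S$ produces at each level the covariant fibrant replacement of the corresponding slice; the careful verification will use the pushout-product compatibility of \leftref{leftitem:pushout product} together with the simplicial (hence Cartesian closed) compatibility of the biReedy structure from \cref{rem:jt calculus}. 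Uniqueness then follows from uniqueness of left Bousfield localization at a set of morphisms, since the fibrant object characterization (1) pins $S$ down up to generating equivalence.
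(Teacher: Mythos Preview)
Your approach constructs only one model structure (the Bousfield localization of biReedy) and then tries to verify all four conditions. The paper instead builds \emph{two} model structures and shows they coincide. Alongside your localization, call it (B), the paper forms the Reedy model structure (A) on simplicial objects in $(\ss_{/X})^{Cov}$; by the general theory of Reedy model structures, (A) automatically has level-wise covariant equivalences as its weak equivalences, so (3) comes for free. Since (A) and (B) share the cofibrations (monomorphisms), they agree once their fibrant objects do. Fibrant in (A) means each matching map $L_k\to M_kL$ is a left fibration, while fibrant in (B) means each $L_k\to X$ is a left fibration; the paper identifies these by induction on $k$, using the triangle $L_k\to M_kL\to X$ together with the observation that $M_kL\to X$, being a limit of the left fibrations $L_m\to X$ for $m<k$, is itself a left fibration, so the two conditions are interchangeable by closure of left fibrations under limits, composition, and right cancellation.

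Your ``formal'' argument for the easy direction of (3) does not work as stated. The adjunction $L_k\dashv\Val_k$ tells you that $S$-locality of an \emph{object} $W$ amounts to $T$-locality of each $W_k$, but it does not tell you that a level-wise $T$-local equivalence is an $S$-local equivalence: $\Map_{\sss/X}(Y,W)$ is the end $\int_k\Map_{\ss/X}(Y_k,W_k)$, and level-wise equivalence of the integrand only yields an equivalence of ends when the diagram is suitably fibrant --- which is precisely the (A)-fibrancy of $W$ that the paper's matching-object induction establishes. Your small-object-argument sketch for the converse can in fact be pushed through (each attached cell is, level by level, a pushout of covariant trivial cofibrations, using \leftref{leftitem:pushout product}), and once that is done the ``easy'' direction follows from it by $2$-out-of-$3$; but the paper's two-model-structure route sidesteps this analysis entirely, reading off (3) from (A) and (1), (4) from (B).
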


 \begin{proof}
  Starting with the Reedy model structure on $(\ss_{/X})^{Ree}$, we can construct two model structures on the category $\sss_{/X}$.
  \begin{enumerate}
   \item First, we can localize the Reedy model structure with respect to map $F(0) \to F(n) \to X$ 
   to get the covariant model structure $(\ss_{/X})^{Cov}$. 
   Then we can take simplicial objects in this model structure, which gives us the category $\sss_{/X}$, and give it the Reedy model structure.  
   
   It immediately satisfies following conditions:
   \begin{itemize}
    \item It is simplicial combinatorial left proper.
    \item Cofibrations are monomorphisms.
    \item Weak equivalences are level-wise covariant equivalences over $X$.
   \end{itemize}
   \item Alternatively, we can first take the Reedy model structure on $\sss_{/X}$. Then we can localize the model structure with respect to 
   maps $F(k,0) \to F(k,n) \to X$ to get a model structure on $\sss_{/X}$ which immediately satisfies following conditions:
   \begin{itemize}
    \item It is simplicial combinatorial left proper.
    \item Cofibrations are monomorphisms
    \item The fibrant objects are Reedy left fibrations.
    \item A weak equivalence (fibration) between Reedy left fibrations is precisely level-wise Reedy equivalence (biReedy fibration).
   \end{itemize}
  \end{enumerate}
   Thus, the theorem follows if we can prove that these two model structures coincide. As we already have the same cofibrations, 
   it suffices to prove both model structures have the same fibrant objects. 
   In order to do that we need to better understand the fibrant objects in the two model structures. 
   
   Let $p:L \to X$ be a map of bisimplicial spaces. Let $M_kL$ be the corresponding matching object. 
   An object, is fibrant in the first model structure if the maps $L_k \to M_kL$ are left fibrations of simplicial spaces for all $k \geq 0$, where $M_kL$ is the {\it matching object} \cite[Section 5.2]{hovey1999modelcategories}. 
   On the other side, $p$ is fibrant in the second model structure if the maps $L_k \to X$ are left fibrations of simplicial spaces for all $k \geq 0$. 
   We need to prove these two conditions coincide. 
   
   In order to prove it we use following commutative triangle
   \begin{center}
      \begin{tikzcd}[row sep=0.15in, column sep=0.15in]
    	L_k \arrow[dr] \arrow[rr] & & M_kL \arrow[dl] \\
   	    & X & 
   	  \end{tikzcd}.
   \end{center}
	Let $p$ be fibrant in the first model structure. We want to prove that $L_k \to X$ is a left fibration. We proceed by induction. The case $k=0$ follows from the fact that $M_0L = X$. Assume that $L_0, ..., L_k$ are left fibrations over $X$. We want to prove that $L_{k+1} \to X$ is a left fibration over $X$. By construction the map $M_{k+1}L \to X$ is a limit of a diagram in $\ss_{/X}$ with value objects $L_m \to X$ (where $m \leq k$). By induction assumption these are all left fibrations and left fibrations are closed under limits and so $M_{k+1}L \to X$ is a left fibration.  
	The result now follows from the fact that left fibrations are closed under composition.
	
	On the other side assume $p$ is fibrant in the second model structure. We want to prove that $L_k \to M_k \to L$ is a left fibration. By assumption $L_k \to X$ are left fibrations for all $k$ and so $M_kL \to X$ is also a left fibration, as it is a limit with value $L_k$. 
	The result now follows from the fact that in the commutative triangle above the two legs are left fibrations. 
  \end{proof}

   The key input of the proof is that the following two different ways of constructing model structures on $\sss_{/X}$ coincide:
   \begin{center}
    \begin{tikzcd}[row sep=0.3in, column sep=0.4in]
     (\ss_{/X})^{Ree} \arrow[r, "Ree" description, rightsquigarrow] \arrow[d, "cov" description, rightsquigarrow] & 
     (\sss_{/X})^{Ree} \arrow[d, "cov" description, rightsquigarrow] \\
     (\ss_{/X})^{Cov} \arrow[r, "Ree" description, rightsquigarrow] & (\sss_{/X})^{ReeCov}
    \end{tikzcd}.
  \end{center}
    So, the Reedy covariant model structure on bisimplicial spaces over $X$ has two 
    perspectives:
    \begin{itemize}
     \item It is a Reedy model structure, which allows us to easily characterize the weak equivalences. 
     \item It is a localization model structure, which allows us to easily characterize the fibrant objects.
    \end{itemize}
   That is the reason why we were able to give such an elegant characterization of the Reedy covariant model structure. 
   We can now use this characterization to directly generalize results about left fibrations to Reedy left fibrations, using the fact that many results about a model category generalize to its Reedy model category.
  
  \begin{remone} \label{rem:reedy contravariant model structure}
   In analogy with the duality between left and right fibrations, we also have a notion of {\it Reedy right fibrations} and similarly, 
   a {\it Reedy contravariant model structure}, which can be defined and constructed similar to \cref{The Reedy Covariant Model Structure}.
   Hence we will refrain from making those similar definitions explicit.
  \end{remone}
 
 \begin{lemone} \label{lemma:local Reedy covar}
 	Let $p: Y \to X$ be a biReedy fibration over $X$.
 	The following are equivalent.
 	\begin{enumerate}
 		\item $p$ is a Reedy left fibration.
 		\item For every map $\sigma: F(n) \times \Delta[l] \to X$ the induced map 
 		$\sigma^*Y \to F(n) \times \Delta[l]$ is a Reedy left fibration.
 		\item For every map $\sigma: F(n) \to X$ the induced map 
 		$\sigma^*Y \to F(n)$ is a Reedy left fibration.
 	\end{enumerate}
 \end{lemone}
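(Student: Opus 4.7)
The plan is to reduce the equivalence to the analogous local criterion for ordinary left fibrations, \leftref{leftitem:local left fib}, by invoking the level-wise characterization of Reedy left fibrations established during the proof of \cref{The Reedy Covariant Model Structure}. That characterization says: a biReedy fibration $p: Y \to X$ is a Reedy left fibration if and only if for every $k \geq 0$ the map of simplicial spaces $Y_k \to X$ is a left fibration. Together with the observation that, since $X$ (viewed as a bisimplicial space via \cref{Rem Setting the base}) is constant in the $k$-direction, the same is true of $F(n) \times \Delta[l]$, and hence taking the $k$-th row commutes with pulling back, giving $(\sigma^*Y)_k \cong \sigma^*(Y_k)$.

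For (1) $\Rightarrow$ (2), biReedy fibrations are pullback-stable in any model category, so $\sigma^*Y \to F(n) \times \Delta[l]$ is biReedy. Moreover, each $\sigma^*(Y_k) \to F(n) \times \Delta[l]$ is a pullback of the left fibration $Y_k \to X$, and hence is itself a left fibration by \leftref{leftitem:local left fib}. Applying the level-wise characterization then gives that $\sigma^*Y \to F(n) \times \Delta[l]$ is a Reedy left fibration. The implication (2) $\Rightarrow$ (3) is immediate by specializing to $l = 0$, since $F(n) \times \Delta[0] = F(n)$.

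For the key implication (3) $\Rightarrow$ (1), I want to show that each $Y_k \to X$ is a left fibration of simplicial spaces; fixing such a $k$, and given any $\sigma: F(n) \to X$, condition (3) tells us that $\sigma^*Y \to F(n)$ is a Reedy left fibration, so by the level-wise characterization $(\sigma^*Y)_k \cong \sigma^*(Y_k) \to F(n)$ is a left fibration. Applying the converse direction of \leftref{leftitem:local left fib} to the map $Y_k \to X$ (a Reedy fibration because $p$ is biReedy) then yields that $Y_k \to X$ itself is a left fibration. Since this holds for every $k$ and $p$ is biReedy, $p$ is a Reedy left fibration, again by the characterization.

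I do not anticipate any real obstacle: the argument is essentially a transfer of the simplicial criterion to the bisimplicial setting via the row-wise description of Reedy left fibrations, and the only technical point to keep in mind is that pullback along $\sigma$ commutes with taking the $k$-th row precisely because the targets $F(n)$ and $F(n) \times \Delta[l]$ are constant in the $k$-direction.
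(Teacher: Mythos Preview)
Your proof is correct and follows precisely the route the paper indicates: the paper's own justification is the single sentence ``This follows from the fact that Reedy left fibrations are determined level-wise and \leftref{leftitem:local left fib},'' and you have simply unpacked that sentence in detail. The one minor remark is that the level-wise characterization you invoke is actually stated immediately after \cref{Def Reedy left fibration} rather than inside the proof of \cref{The Reedy Covariant Model Structure}, but this does not affect the argument.
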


 This follows from the fact that Reedy left fibrations are determined level-wise and \leftref{leftitem:local left fib}.
 \begin{theone} \label{The Base change Reedy left}
  Let $f: X \to Y$ be map of simplicial spaces. Then the following adjunction 
  \begin{center}
   \adjun{(\sss_{/X})^{ReeCov}}{(\sss_{/Y})^{ReeCov}}{f_!}{f^*}
  \end{center}
  is a Quillen adjunction, which is a Quillen equivalence if $f$ is a CSS equivalence.
 \end{theone}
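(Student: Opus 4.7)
The plan is to reduce the statement to the analogous result for the covariant model structure (\leftref{leftitem:CSS invariant}) by exploiting the two-fold description of $(\sss_{/X})^{ReeCov}$ established in the proof of \cref{The Reedy Covariant Model Structure}: under the identification $\sss_{/X} \cong s(\ss_{/X})$, the Reedy covariant model structure coincides with the Reedy model structure on simplicial objects in the covariant model structure $(\ss_{/X})^{Cov}$. Both $f_!$ (post-composition with $f$) and $f^*$ (pullback along $f$) apply levelwise on simplicial objects, so the adjunction $(f_!, f^*): \sss_{/X} \rightleftarrows \sss_{/Y}$ is just the levelwise extension of the adjunction on the slice categories.

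For the Quillen adjunction part, \leftref{leftitem:CSS invariant} already gives that $(f_!, f^*): (\ss_{/X})^{Cov} \rightleftarrows (\ss_{/Y})^{Cov}$ is a Quillen adjunction. Lifting to the Reedy level is then formal: Reedy cofibrations (resp.\ Reedy trivial cofibrations) in $s(\ss_{/X})^{Cov}$ are detected by latching maps, which are colimits preserved by the left adjoint $f_!$ applied levelwise. Since $f_!$ preserves cofibrations and trivial cofibrations in $(\ss_{/X})^{Cov}$, it preserves Reedy cofibrations and Reedy trivial cofibrations, yielding the desired Quillen adjunction.

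For the Quillen equivalence part, assume $f$ is a CSS equivalence. The key observation is that weak equivalences in $(\sss_{/X})^{ReeCov}$ are precisely levelwise covariant equivalences, so it suffices to check the derived unit and counit level-by-level. Given a Reedy cofibrant $L \in \sss_{/X}$, take a Reedy fibrant replacement $f_!L \to R(f_!L)$. Because Reedy fibrant replacements are levelwise covariant equivalences and Reedy fibrant objects in $s(\ss_{/Y})^{Cov}$ are levelwise covariantly fibrant, the object $R(f_!L)_k$ serves as a covariant fibrant replacement of $f_!(L_k)$. Hence the level-$k$ piece of the Reedy derived unit $L \to f^*R(f_!L)$ agrees with the derived unit for $(f_!, f^*)$ on $(\ss_{/X})^{Cov}$ applied to $L_k$, which is a covariant equivalence since $f$ is a CSS equivalence. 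The dual argument handles the derived counit, using that every Reedy cofibrant object is levelwise cofibrant.

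The main obstacle will be articulating cleanly the compatibility of Reedy fibrant/cofibrant replacements with the levelwise covariant structure, in particular verifying that Reedy fibrant objects are levelwise covariantly fibrant. This rests on the fact that $(\ss_{/X})^{Cov}$ has $\id_X: X \to X$ as a fibrant terminal object (since it is trivially a left fibration), so that matching objects of Reedy fibrant objects are inductively built from covariantly fibrant pieces and the fibrancy of the matching map forces each level to be a left fibration. Once this bookkeeping is in place, the argument reduces entirely to the covariant case and \leftref{leftitem:CSS invariant}.
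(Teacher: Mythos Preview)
Your proposal is correct and follows essentially the same route as the paper. The paper's proof is a single-line citation of \cite[Proposition 15.4.1]{hirschhorn2003modelcategories} applied to \leftref{leftitem:CSS invariant}: that proposition states precisely that a Quillen adjunction (equivalence) between model categories lifts to a Quillen adjunction (equivalence) between the associated Reedy model structures on simplicial objects. Your argument unpacks this general result by hand---checking latching maps for the adjunction part and verifying derived unit/counit levelwise for the equivalence part---but the underlying mechanism is identical, and your discussion of why Reedy fibrant objects are levelwise covariantly fibrant reprises exactly the inductive matching-object argument already carried out in the proof of \cref{The Reedy Covariant Model Structure}.
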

  This follows directly from applying \cite[Proposition 15.4.1]{hirschhorn2003modelcategories} to \leftref{leftitem:CSS invariant}.
  Similarly applying the same proposition to \leftref{leftitem:cov to diag} gives us the next result.

  \begin{theone} \label{The DiagRee local ReeCov}
  The following adjunction 
  \begin{center}
   \adjun{(\sss_{/X})^{ReeCov}}{(\sss_{/X})^{DiagRee}}{id}{id}
  \end{center}
  is a Quillen adjunction, which is a Quillen equivalence if $X$ is homotopically constant.
  Here the left hand side has the Reedy covariant model structure and the right hand side has the induced diagonal Reedy model structure.
  In particular, the diagonal Reedy model structure is a localization of the Reedy covariant model structure.
 \end{theone}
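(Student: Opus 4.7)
The plan is to invoke \cite[Proposition 15.4.1]{hirschhorn2003modelcategories}, which lifts a Quillen adjunction (equivalence) of two model structures on a category to the Reedy model structures on simplicial diagrams in that category, and apply it to the identity Quillen adjunction $(\ss_{/X})^{Cov} \rightleftarrows (\ss_{/X})^{Diag}$ from \leftref{leftitem:cov to diag}. By the construction of the Reedy covariant model structure given in step (1) of the proof of \cref{The Reedy Covariant Model Structure}, the Reedy model structure on $\Fun(\Delta^{op}, (\ss_{/X})^{Cov})$ is exactly $(\sss_{/X})^{ReeCov}$. The crux is therefore to identify the Reedy model structure on $\Fun(\Delta^{op}, (\ss_{/X})^{Diag})$ with $(\sss_{/X})^{DiagRee}$.

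To carry out this identification I would unwind the functor $\Diag_1$: for a bisimplicial space $Y$ with triple-index entries $Y_{k,n,l}$, one has $(\Diag_1)^*(Y)_{n,l} = Y_{n,l,l}$, so that for each outer index $k = n$, the restriction is nothing but the diagonal of the simplicial space $Y_n$. A Reedy equivalence of $(\Diag_1)^*Y \to (\Diag_1)^*Z$ in $\ss$ is therefore a level-wise (in the outer $\Delta^{op}$) diagonal equivalence of simplicial spaces, which is exactly the Reedy weak equivalence condition for simplicial diagrams in $(\ss_{/X})^{Diag}$. Since the cofibrations of $(\ss_{/X})^{Diag}$ are monomorphisms, standard Reedy-theoretic arguments (the latching maps of a diagram of monomorphisms are again monomorphisms) give that the Reedy cofibrations on $\Fun(\Delta^{op}, (\ss_{/X})^{Diag})$ are precisely the monomorphisms of trisimplicial sets, matching the cofibrations of $(\sss_{/X})^{DiagRee}$.

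Once the identification is established, Hirschhorn's proposition immediately yields that $\mathrm{id}: (\sss_{/X})^{ReeCov} \rightleftarrows (\sss_{/X})^{DiagRee}$ is a Quillen adjunction, which is a Quillen equivalence precisely when \leftref{leftitem:cov to diag} is, namely when $X$ is homotopically constant. The final assertion, that $(\sss_{/X})^{DiagRee}$ is a Bousfield localization of $(\sss_{/X})^{ReeCov}$, then follows formally: the two structures share the same cofibrations, and an identity-identity Quillen adjunction with this property is always realized by a localization at the class of trivial cofibrations of the right-hand model structure.

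The main technical obstacle I anticipate lies in the bookkeeping of the three simplicial directions and confirming that $(\Diag_1)^*$ does extract precisely the ``outer level-wise diagonal'' as claimed; once the indexing conventions are pinned down, the identification of weak equivalences and cofibrations is routine, and everything else is a formal consequence of Hirschhorn's lifting result together with \leftref{leftitem:cov to diag}.
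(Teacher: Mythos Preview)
Your proposal is correct and follows exactly the paper's approach: the paper proves this theorem in one line by applying \cite[Proposition 15.4.1]{hirschhorn2003modelcategories} to the Quillen adjunction of \leftref{leftitem:cov to diag}, and you do the same, merely spelling out the identification of the Reedy model structure on $\Fun(\Delta^{op},(\ss_{/X})^{Diag})$ with the induced diagonal Reedy model structure on $\sss_{/X}$ (which the paper leaves implicit). Your unwinding of $(\Diag_1)^*$ is correct, and the bookkeeping worry you flag is indeed just a matter of matching variable names.
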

 
 \begin{lemone} \label{Lemma Exp of Reedy left fib is Reedy left fib}
   Let $i:A \to B$ and $j: C \to D$ be cofibrations of bisimplicial spaces over $X$. 
  If $i$ or $j$ are trivial cofibrations in the Reedy covariant model structure,
  then $i \square j$ is a trivial cofibration as well.
 \end{lemone}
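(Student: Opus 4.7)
My plan is to reduce the claim directly to its level-wise analogue for simplicial spaces, namely \leftref{leftitem:pushout product}, by exploiting the characterization of weak equivalences in the Reedy covariant model structure given in \cref{The Reedy Covariant Model Structure}(3).

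First, I would handle the cofibration part for free: since $i$ and $j$ are cofibrations of bisimplicial spaces, they are monomorphisms, and the pushout-product of two monomorphisms of presheaves is again a monomorphism. Hence $i \square j$ is always a cofibration, and the entire content of the lemma is to show that $i \square j$ is a weak equivalence when one of $i$ or $j$ already is.

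Second, I would recall that weak equivalences in $(\sss_{/X})^{ReeCov}$ are level-wise covariant equivalences over $X$ (in the first simplicial direction). Since limits and colimits in $\sss = \Fun(\Delta^{op},\ss)$ are computed level-wise, for every $k \geq 0$ we have the identification
\[
(i \square j)_k \;=\; i_k \square j_k,
\]
where the right-hand pushout-product is taken in the category $\ss_{/X_k}$ of simplicial spaces over the $k$-level simplicial space $X_k$ of the bisimplicial space $X = \LFib(X)$. Monomorphisms of bisimplicial spaces are level-wise monomorphisms, so $i_k$ and $j_k$ are cofibrations in $\ss_{/X_k}$; similarly, if (say) $i$ is a trivial cofibration in the Reedy covariant model structure then $i_k$ is a trivial cofibration in the covariant model structure on $\ss_{/X_k}$ for every $k$.

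Third, I would invoke \leftref{leftitem:pushout product} at each level to conclude that $i_k \square j_k$ is a trivial cofibration in the covariant model structure on $\ss_{/X_k}$, and in particular a covariant equivalence. Assembling these level-wise conclusions, $i \square j$ is a level-wise covariant equivalence over $X$, hence a weak equivalence in $(\sss_{/X})^{ReeCov}$ by \cref{The Reedy Covariant Model Structure}(3). Combined with the first step this finishes the proof.

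I do not expect a real obstacle here; the only thing to be careful about is the compatibility of pushout-products with the level functor, which is immediate from the fact that $\sss$ is a presheaf category so all limits and colimits are level-wise. Everything else is a direct transcription of the simplicial-space result into the bi-simplicial setting.
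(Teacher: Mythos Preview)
Your proposal is correct and is exactly the argument the paper gives: the paper's entire proof is the one-line remark that the lemma ``follows from \leftref{leftitem:pushout product} and the fact that cofibrations and trivial cofibrations are determined level-wise,'' which is precisely your reduction via \cref{The Reedy Covariant Model Structure}(3) together with the level-wise computation $(i \square j)_k = i_k \square j_k$. There is nothing to add.
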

  This follows from \leftref{leftitem:pushout product} and the fact that cofibrations and trivial cofibrations are determined level-wise.
 
 \begin{theone} \label{The Pullback preserves Reedy cov equiv} 
  Let $p:R \to X$ be a Reedy right fibration. The following is a Quillen adjunction
  \begin{center}
   \adjun{(\sss_{/X})^{ReeCov}}{(\sss_{/X})^{ReeCov}}{p_!p^*}{p_*p^*}.
  \end{center}
 \end{theone}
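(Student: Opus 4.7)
The approach is to exploit the level-wise description of the Reedy covariant model structure established in \cref{The Reedy Covariant Model Structure} and reduce the theorem to the analogous statement for the covariant model structure on simplicial spaces, namely \leftref{leftitem:exponent}. To verify the Quillen adjunction property I will show directly that $p_!p^*$ is a left Quillen functor, i.e.\ that it preserves cofibrations and trivial cofibrations.

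First, I would unpack the functor explicitly. The left adjoint $p_!p^*$ sends $Y \to X$ to the pullback $Y \times_X R$ with its projection to $X$. Since pullbacks in $\sss$ are computed level-wise, and since $X$ regarded as a bisimplicial space (\cref{Rem Setting the base}) is constant in the outer simplicial direction, the level-wise formula at simplicial level $k$ reads $(p_!p^*Y)_k = Y_k \times_X R_k$ in $\ss_{/X}$. Because $p: R \to X$ is a Reedy right fibration, each map $R_k \to X$ is a right fibration of simplicial spaces, so that \leftref{leftitem:exponent} is available in every level.

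Second, by \cref{The Reedy Covariant Model Structure}, cofibrations in $(\sss_{/X})^{ReeCov}$ are precisely monomorphisms and weak equivalences are precisely level-wise covariant equivalences over $X$. Hence a trivial cofibration in $(\sss_{/X})^{ReeCov}$ is exactly a monomorphism whose restriction at every level $k$ is a trivial cofibration in $(\ss_{/X})^{Cov}$. Preservation of cofibrations under $p_!p^*$ is then automatic, as pullbacks preserve monomorphisms. For the preservation of trivial cofibrations, given such $j: Y \to Z$ in $(\sss_{/X})^{ReeCov}$, \leftref{leftitem:exponent} applied to the right fibration $R_k \to X$ ensures that $(-) \times_X R_k$ sends $j_k: Y_k \to Z_k$ to a trivial cofibration $Y_k \times_X R_k \to Z_k \times_X R_k$ in $(\ss_{/X})^{Cov}$. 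Assembling these level-wise, $p_!p^*j$ is a monomorphism that is a level-wise covariant equivalence, hence a trivial cofibration in $(\sss_{/X})^{ReeCov}$.

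The main obstacle, modest but worth being explicit about, is the bookkeeping around the level-wise identification: verifying that under the convention of \cref{Rem Setting the base} the level-wise pullbacks in $\sss_{/X}$ truly compute as pullbacks of simplicial spaces along the right fibrations $R_k \to X$, so that the level-wise instance of \leftref{leftitem:exponent} is genuinely what is needed. Once this identification is in hand, the level-wise strategy is mechanical, paralleling the reduction used already in \cref{Lemma Exp of Reedy left fib is Reedy left fib}.
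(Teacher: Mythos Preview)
Your argument is correct and follows essentially the same route as the paper: both reduce to \leftref{leftitem:exponent} via the level-wise description of the Reedy covariant model structure. The paper compresses this into a single citation of \cite[Proposition 15.4.1]{hirschhorn2003modelcategories}, whereas you unpack the verification that $p_!p^*$ preserves (trivial) cofibrations by hand; your added care in noting that the right fibration $R_k \to X$ used at level $k$ varies with $k$ is a worthwhile clarification, since a literal application of Hirschhorn's proposition concerns a \emph{fixed} Quillen pair applied level-wise.
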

  This follows from applying \cite[Proposition 15.4.1]{hirschhorn2003modelcategories} to \leftref{leftitem:exponent}.
 
  \begin{theone} \label{The Recognition principle for Reedy covariant equivalences} 
  Let $X$ be a simplicial space considered a bisimplicial space (\cref{Rem Setting the base}). 
  Then a map of bisimplicial spaces $Y \to Z$ over $X$ is a Reedy covariant equivalence if and only if for each map $x: F(0) \to X$
  the induced map
  $$   Y \underset{X}{\times} \LFib(R_x) \to Z \underset{X}{\times} \LFib(R_x) $$
  is a diagonal Reedy equivalence. Here $R_x$ is a choice of contravariant fibrant replacement of $x$ in $\ss_{/X}$.
 \end{theone}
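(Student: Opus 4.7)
The plan is to reduce the claim to the analogous recognition principle for ordinary covariant equivalences, \leftref{leftitem:recognition}, applied at each level of the first simplicial direction of the bisimplicial structure.

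First, by \cref{The Reedy Covariant Model Structure}(3), a map $f : Y \to Z$ of bisimplicial spaces over $X$ is a Reedy covariant equivalence if and only if $f_k : Y_k \to Z_k$ is a covariant equivalence in $\ss_{/X}$ for every $k \geq 0$. Applying \leftref{leftitem:recognition} levelwise, this holds if and only if for every $k \geq 0$ and every $x : F(0) \to X$, the classical diagonal of simplicial spaces
$$\Diag\bigl(Y_k \underset{X}{\times} R_x\bigr) \to \Diag\bigl(Z_k \underset{X}{\times} R_x\bigr)$$
is a Kan equivalence.

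Next I would identify this family of conditions, indexed by $k$ and $x$, with the single bisimplicial condition in the statement. Unpacking the notation from \cref{subsec:notation bisimp}, $\LFib(R_x)$ arises from the left fibration embedding and is constant in the first simplicial direction with value $R_x$, so there is a natural isomorphism $(Y \times_X \LFib(R_x))_k \cong Y_k \times_X R_x$ for every $k$. By \cref{The Diagonal Reedy Model Structure}, the induced map $Y \times_X \LFib(R_x) \to Z \times_X \LFib(R_x)$ is a diagonal Reedy equivalence if and only if $(\Diag_1)^*$ applied to it is a Reedy equivalence of simplicial spaces, i.e., a Kan equivalence at each first-index level. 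Combined with the straightforward identification $(\Diag_1)^*(W)_k \cong \Diag(W_k)$ coming from the definition $\Diag_1([n],[l]) = ([n],[l],[l])$, this condition translates precisely to requiring that $\Diag(Y_k \times_X R_x) \to \Diag(Z_k \times_X R_x)$ be a Kan equivalence for every $k$ and $x$, matching the first paragraph verbatim.

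The main obstacle is keeping the two notions of \emph{diagonal} distinct and tracking their compatibility: the classical diagonal $\Diag: \ss \to \s$ used in the simplicial-space recognition principle, versus the bisimplicial diagonal $(\Diag_1)^*: \sss \to \ss$ defining the diagonal Reedy model structure. Once the compatibilities $(Y \times_X \LFib(R_x))_k \cong Y_k \times_X R_x$ and $(\Diag_1)^*(W)_k \cong \Diag(W_k)$ are verified, the two formulations of the condition become literally identical and the theorem follows. Note that the independence of the choice of $R_x$ is automatic: any two contravariant fibrant replacements of $x$ are equivalent in the contravariant model structure on $\ss_{/X}$, so the induced pullback comparison on either side of the proposed equivalence is unchanged up to Kan equivalence after applying the diagonal.
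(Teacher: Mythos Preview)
Your proposal is correct and follows exactly the approach the paper indicates: the paper's own justification is the single sentence ``This follows from \leftref{leftitem:recognition} and the fact that weak equivalences are determined level-wise,'' and you have carefully unpacked precisely this, including the bookkeeping identifying $(\Diag_1)^*(W)_k$ with $\Diag(W_k)$ and $(Y \times_X \LFib(R_x))_k$ with $Y_k \times_X R_x$. There is nothing to add.
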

 This follows from \leftref{leftitem:recognition} and the fact that weak equivalences are determined level-wise.
 
 \begin{remone}
  It is interesting to compare this result to the one for simplicial spaces.
  Despite the fact that we generalized everything to the bisimplicial setting, the contravariant fibrant replacements have remained simplicial spaces.
  
  The underlying reason is that for a map $x: F(0) \to X$, contravariant fibrant replacements and 
  Reedy contravariant fibrant replacements are the same. Indeed for an arbitrary Reedy right fibration $R \to X$ we have 
  $$ \Map_{\sss_{/X}}(F(0,0),R) \xrightarrow{ \ \simeq \ } \Map_{\ss_{/X}}(F(0),R_0) 
  \xrightarrow{ \ \simeq \ } \Map_{\ss_{/X}}(R_x,R_0) \xrightarrow{ \ \simeq \ } \Map_{\sss_{/X}}(R_x,R)$$
 where we used the fact that $R_0 \to X$ is a right fibration.
 \end{remone}
 
 Similar to the case of covariant model structure, weak equivalences between fibrant objects can be characterized 
 in much easier ways applying \leftref{leftitem:equiv left fib} level-wise.
 
 \begin{theone} \label{The Equiv of Reedy left fib}
  Let $L$ and $M$ be two Reedy left fibrations over $X$.
  Let $g:L \to M$ be a map over $X$. Then the following are equivalent.
  \begin{enumerate}
   \item $g: L \to M$ is a biReedy equivalence.
   \item $\Val(g): \Val(Y) \to \Val(Z)$ is a Reedy equivalence.
   \item For every $x: F(0) \to X$, $F(0) \times_{X} Y \to F(0) \times_{X} Z$ is a diagonal Reedy equivalence of bisimplicial spaces.
  \end{enumerate}
 \end{theone}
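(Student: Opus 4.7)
The plan is to reduce the statement, one simplicial level at a time, to the analogous characterisation of equivalences between left fibrations of simplicial spaces, \leftref{leftitem:equiv left fib}. I will start by repackaging biReedy equivalences level-wise: $g: L \to M$ is a biReedy equivalence if and only if, for every $k \geq 0$, the component $g_k: L_k \to M_k$ is a Reedy equivalence of simplicial spaces. Because $L$ and $M$ are Reedy left fibrations, \cref{Def Reedy left fibration} guarantees that each $L_k \to X$ and $M_k \to X$ is a left fibration of simplicial spaces in its own right, so the hypothesis of \leftref{leftitem:equiv left fib} is satisfied at every level, and I can apply that result to every $g_k$.

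Applying it at a fixed $k$ yields the equivalence of (i) $g_k$ is a Reedy equivalence, (ii) $(g_k)_0: (L_k)_0 \to (M_k)_0$ is a Kan equivalence, and (iii) for every $x: F(0) \to X$, the pullback $F(0) \times_X L_k \to F(0) \times_X M_k$ is a diagonal equivalence of simplicial spaces. I will then assemble the conjunctions over $k \geq 0$ of (i), (ii), (iii) into the three global statements in the theorem.

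For (1) $\Leftrightarrow$ (2), I will use that $\Val(L)$ is the simplicial space whose level $k$ is $(L_k)_0$ (and likewise for $\Val(M)$), so a Reedy equivalence $\Val(g)$ is precisely a level-wise Kan equivalence $(g_k)_0$. For (1) $\Leftrightarrow$ (3), I will note that the bisimplicial pullback $F(0) \times_X L$ has simplicial level $k$ equal to $F(0) \times_X L_k$, because pulling back along $F(0) \to X$ commutes with evaluating at $k$. Then, by \cref{The Diagonal Reedy Model Structure}, a diagonal Reedy equivalence of bisimplicial spaces is by definition a map $\varphi$ for which $(\Diag_1)^*\varphi$ is a Reedy equivalence of simplicial spaces, and unpacking $\Diag_1([n],[l]) = ([n],[l],[l])$ shows this condition is exactly that at each simplicial level $k$ the induced map of simplicial spaces is a diagonal equivalence.

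The main obstacle is bookkeeping rather than mathematical: verifying that $\Val$, pullback along $F(0) \to X$, and the $(\Diag_1)^*$ construction all interact compatibly with the \emph{fix $k$} operation, so that the conjunction over $k$ of the level-wise conditions produced by \leftref{leftitem:equiv left fib} really does reproduce conditions (2) and (3) verbatim. Once these translations are established, the three-way equivalence follows immediately from the level-wise application of \leftref{leftitem:equiv left fib}.
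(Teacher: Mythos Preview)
Your proposal is correct and follows exactly the approach the paper indicates: the paper does not give an explicit proof but simply remarks, immediately before the statement, that the result follows by ``applying \leftref{leftitem:equiv left fib} level-wise.'' Your write-up is a faithful unpacking of that sentence, supplying the bookkeeping that $\Val$, fiberwise pullback, and $(\Diag_1)^*$ all commute with evaluation at a fixed simplicial level $k$ so that the conjunction over $k$ of the three conditions in \leftref{leftitem:equiv left fib} reassembles into conditions (1), (2), (3) of the theorem.
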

  Finally, we can also recover the {\it Grothendieck construction}.
    Let $\ssint_\C$, $\ssH_\C$, $\ssbT_\C$ and $\ssbI_\C$ be the functors $\sint_\C$, $\sH_\C$, $\sbT_\C$ and $\sbI_\C$ 
    defined level-wise, as further explained in \leftref{leftitem:groth}. Applying \cite[Proposition 15.4.1]{hirschhorn2003modelcategories} to these adjunctions gives us following result.

  \begin{theone} \label{the:simplicial groth construction}
  Let $\C$ be a small category.
   The two simplicially enriched adjunctions  
    \begin{center}
    \begin{tikzcd}[row sep=0.5in, column sep=0.9in]
     \Fun(\C,\ss^{Ree})^{proj} \arrow[r, shift left = 1.8, "\ssint_\C"] & 
     (\sss_{/N\C})^{ReeCov} \arrow[l, shift left=1.8, "\ssH_\C", "\bot"'] \arrow[r, shift left=1.8, "\ssbT_\C"] &
     \Fun(\C,\ss^{Ree})^{proj} \arrow[l, shift left=1.8, "\ssbI_\C", "\bot"']
    \end{tikzcd}
   \end{center}
   are Quillen equivalences. Here $\Fun(\C,\ss)$ has the projective model structure and $\sss_{/N\C}$ has the Reedy covariant model structure over $N\C$.
  \end{theone}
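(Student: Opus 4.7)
The plan is to apply \cite[Proposition 15.4.1]{hirschhorn2003modelcategories}---which lifts any Quillen adjunction (respectively Quillen equivalence) between two model categories to a Quillen adjunction (respectively Quillen equivalence) between the Reedy model structures on the corresponding categories of simplicial objects---to each of the two Quillen equivalences of \leftref{leftitem:groth}. This immediately produces Quillen equivalences between the Reedy lifts $\Fun(\Delta^{op}, \Fun(\C,\s^{Kan})^{proj})^{Ree}$ and $\Fun(\Delta^{op}, (\ss_{/N\C})^{Cov})^{Ree}$, with functors obtained by applying $\sint_\C$, $\sH_\C$, $\sbT_\C$, $\sbI_\C$ level-wise in $\Delta^{op}$; these are by definition the functors $\ssint_\C$, $\ssH_\C$, $\ssbT_\C$, $\ssbI_\C$.

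It remains to identify these Reedy lifts with the model structures appearing in the statement. The identification
$\Fun(\Delta^{op}, (\ss_{/N\C})^{Cov})^{Ree} = (\sss_{/N\C})^{ReeCov}$
holds by construction: it is precisely route (1) in the proof of \cref{The Reedy Covariant Model Structure}, where the Reedy covariant model structure was set up as the Reedy model structure on simplicial objects in the covariant model structure on $\ss_{/N\C}$.

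The other identification, $\Fun(\Delta^{op}, \Fun(\C,\s^{Kan})^{proj})^{Ree} = \Fun(\C,\ss^{Ree})^{proj}$, requires a short verification that these two model structures on the common underlying category $\Fun(\C \times \Delta^{op}, \s)$ coincide. Both have the same cofibrations (monomorphisms, determined pointwise) and the same weak equivalences (natural transformations that are pointwise, i.e., at each pair $(c,n)$, Kan equivalences of spaces). For fibrations, the key observation is that limits---hence matching objects---in $\Fun(\C,\s^{Kan})^{proj}$ are computed pointwise in $\C$, so the Reedy fibration condition on the left---pointwise-in-$\C$ Kan fibrancy of $F_n(c) \to G_n(c) \times_{M_n G(c)} M_n F(c)$ for each $n$---is exactly the projective condition that each component $F(c) \to G(c)$ is a Reedy fibration in $\ss^{Ree}$.

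The main (minor) obstacle is this last identification, but once one records that matching objects in projective model structures are computed pointwise, the agreement of fibrations is immediate. With both identifications in place, the theorem follows directly from Hirschhorn's proposition applied to \leftref{leftitem:groth}.
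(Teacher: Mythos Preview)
Your approach is exactly the paper's: the paper simply says the result follows by applying \cite[Proposition 15.4.1]{hirschhorn2003modelcategories} to the Quillen equivalences of \leftref{leftitem:groth}, and you spell out precisely this, together with the two identifications of Reedy lifts that the paper leaves implicit. One small slip: projective cofibrations are not characterized as monomorphisms, so your parenthetical ``(monomorphisms, determined pointwise)'' is inaccurate---but this is harmless, since your argument that the two model structures on $\Fun(\C\times\Delta^{op},\s)$ coincide goes through once you have matched weak equivalences and fibrations, as you do.
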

 
 \begin{remone}
  Here we only mentioned the Grothendieck construction over nerves of categories. 
  However, we also have a Grothendieck construction over arbitrary simplicial spaces.
  Indeed, this follows from the Quillen equivalence between the covariant model structure over simplicial spaces and 
  simplicial sets (\cite[Appendix B]{rasekh2017left}) and the straightening construction for the covariant model structure
  \cite[Chapter 2]{lurie2009htt}.
 \end{remone}

\section{Localizations of Reedy Left Fibrations} \label{Sec Localizations of Reedy Right Fibrations}
 In \cref{Sec The Reedy Covariant Model Structure} we defined fibrations which we should think of as modeling functors valued in simplicial spaces (as has been illustrated in \cref{the:simplicial groth construction}).
In this section we want to study functors valued in localizations of simplicial spaces.
In the next section we will then apply these results to functors valued in Segal spaces, complete Segal spaces and homotopically constant simplicial spaces.

\begin{notone} \label{not:local}
	As this whole section is focused on the study of Bousfield localizations we will establish following terminology with 
	regard to localizations.
	\begin{itemize}
		\item Throughout $S$ will be a set of monomorphisms in the category simplicial spaces $\ss$.
		\item A simplicial space $X$ is called {\it local with respect to $S$} if for every every map 
		$f: A \to B$ in $S$, 
		$$\Map_{\ss}(B,X) \to \Map_{\ss}(A,X)$$
		is a Kan equivalence.
		\item A bisimplicial space $X$ is called {\it local with respect to $S$} if 
		$\Val(X)$ is local with respect to $S$.
		This is equivalent to 
		$$\Map_{\sss}(\VEmb(B),X) \to \Map_{\sss}(\VEmb(A),X)$$
		being a Kan equivalence for every $A \to B$ in $S$.
		\item A map of bisimplicial spaces $p:Y \to X$ is called {\it local with respect to $S$} 
		if the map 
		$$
		\Map_{\sss}(\VEmb(B),Y) \to \Map_{\sss}(\VEmb(A),Y) \times_{\Map_{\sss}(\VEmb(A),X)} \Map_{\sss}(\VEmb(B),X)
		$$
		is a weak equivalence for every map $f: A \to B$ in $S$. Note this is equivalent to the condition that
		 for every map $f: A \to B$ in $S$ and every map $\VEmb(B) \to X$, the induced map 
		$$\Map_{/X}(\VEmb(B),Y) \to \Map_{/X}(\VEmb(A),Y)$$
		is a Kan equivalence.
	\end{itemize}
\end{notone}

We can now use the intuition outlined above to give following definition. 
Here, recall, for a given simplicial space $X$, we denote the bisimplicial space $\LEmb(X)$ again by $X$, to simplify notation
(\cref{Rem Setting the base}).

\begin{defone} \label{def:localized reedy left}
	Let $X$ be a simplicial space and $S$ a set of monomorphisms of simplicial spaces (not over $X$).
	Then a Reedy left fibration $p:L \to X$ is an {\it $S$-localized Reedy left fibration} if it is local with respect to $S$.
\end{defone}

 The goal of this section is to study this fibration. In particular, we want to show:
 \begin{enumerate}
 	\item It comes as the fibrant objects of a model structure on $\sss_{/X}$ (\cref{The Localized Reedy contravariant model structure}).
 	\item We can give various alternative characterizations of the fibrant objects (\cref{the:localized left local}/\cref{cor:localized left}).
 	\item We can give a detailed characterization of the weak equivalences (\cref{the:recognition principle for localized Reedy right equivalences}).
 \end{enumerate}

\subsection{A Tale of Three Localization Model Structures}
In this subsection we want prove that $S$-localized Reedy left fibrations are fibrant objects in a model structure, 
the {\it $S$-localized Reedy covariant model structure}. 
Moreover, in order to study its fibrant objects and weak equivalences (in \cref{subsec:understanding things}), we introduce several related model structures, the {\it $S$-localized Reedy model structure} and {\it diagonal $S$-localized Reedy model structure}.
Finally, we end this subsection by proving a Grothendieck construction for $S$-localized Reedy left fibrations over nerves 
of categories. 

\begin{theone} \label{The Localized Reedy model structure} 
 Let $S$ be a set of monomorphisms of simplicial spaces.
 There is a unique, combinatorial left proper simplicial model structure on $\ss$, denoted by $\ss^{Ree_S}$ and called the 
 {\it $S$-localized Reedy model structure},
 defined as follows. 
 \begin{itemize}
  \item[C] A map $Y \to Z$ is a cofibration if it is an inclusion.
  \item[F] An object $W$ is fibrant if it is Reedy fibrant and local with respect to $S$.
  \item[W] A map $Y \to Z$ is a weak equivalence if for every fibrant object $W$ the map 
  $$\Map_{\ss}(Z, W) \to \Map_{\ss}(Y,W)$$
  is a Kan equivalence.
 \end{itemize}
\end{theone}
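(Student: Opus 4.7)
The plan is to construct $\ss^{Ree_S}$ as the left Bousfield localization of the Reedy model structure $\ss^{Ree}$ at the set $S$. The key input is Hirschhorn's general existence theorem for left Bousfield localizations (see \cite[Theorem 4.1.1]{hirschhorn2003modelcategories}), which produces such a localization from any set of maps in a combinatorial, left proper, simplicial model category. All three hypotheses for $\ss^{Ree}$ were recorded in \cref{Subsec Reedy Model Structure}, so the theorem applies directly to $S$.

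Applying the existence theorem to $S$ yields a model structure on $\ss$ with cofibrations unchanged from $\ss^{Ree}$, namely the monomorphisms. Its fibrant objects are exactly the Reedy fibrant simplicial spaces that are $S$-local; note that since the maps in $S$ are already monomorphisms and hence Reedy cofibrations, no cofibrant replacement is needed to check locality. By the general theory the weak equivalences are the $S$-local equivalences, i.e.\ maps $Y \to Z$ for which $\Map_{\ss}(Z,W) \to \Map_{\ss}(Y,W)$ is a Kan equivalence whenever $W$ is fibrant. Moreover the localized model structure inherits combinatoriality, left properness, and its simplicial enrichment from $\ss^{Ree}$. This matches the data in the statement.

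Uniqueness is automatic: a model structure is determined by its cofibrations together with either its weak equivalences or its class of fibrant objects, both of which are specified here. Consequently, there is no genuinely hard step in this theorem; it is essentially a bookkeeping exercise invoking Hirschhorn's machinery, and the only verification needed is that $\ss^{Ree}$ satisfies the hypotheses of the existence theorem, which was already recorded. The substantive work of the section, addressing the more delicate interaction of this localization with Reedy covariant fibrations on bisimplicial spaces and the subsequent recognition of fibrant objects and weak equivalences, is reserved for the later results.
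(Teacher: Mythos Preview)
Your proposal is correct and follows exactly the same approach as the paper: the paper's proof is the single sentence ``This is a direct application of a left Bousfield localization to the Reedy model structure on simplicial spaces \cite[Theorem 4.1.1]{hirschhorn2003modelcategories},'' and your write-up is simply a more detailed unpacking of that invocation.
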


\begin{proof}
 This is a direct application of a left Bousfield localization to the Reedy model structure on simplicial spaces 
 \cite[Theorem 4.1.1]{hirschhorn2003modelcategories}.
\end{proof}

\begin{theone} \label{The Diagonal localized Reedy model structure}
 Let $S$ be a set of monomorphisms of simplicial spaces.
 There is a unique simplicial combinatorial left proper model structure on $\sss$, denoted by $\sss^{DiagRee_S}$ and called the {\it diagonal $S$-localized Reedy model structure},
 defined as follows. 
 \begin{enumerate}
  \item A map $Y \to Z$ is a cofibration if it is an inclusion.
  \item A map $g:Y \to Z$ is a weak equivalence if the diagonal map 
  $$\fDiag(g): \fDiag(Y) \to \fDiag(Z)$$ is an $S$-localized Reedy equivalence.
  \item An object $W$ is fibrant if and only if it is fibrant in the diagonal Reedy model structure
  and local with respect to $S$. 
  \item  The following adjunction 
  \begin{center}
  	\adjun{(\sss)^{DiagRee_S}}{\ss^{Ree_S}}{\fDiag = (\phiDiag)^*}{(\phiDiag)_*}
  \end{center}
  is a Quillen equivalence. Here the left hand side has the diagonal localized Reedy model structure and the 
  right hand side has the localized Reedy model structure.
 \end{enumerate}
\end{theone}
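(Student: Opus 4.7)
The plan is to construct $\sss^{DiagRee_S}$ as a left Bousfield localization of the diagonal Reedy model structure $\sss^{DiagRee}$ (\cref{The Diagonal Reedy Model Structure}) at a suitably chosen set $T$ of cofibrations in $\sss$, then verify all four parts of the theorem in a single sweep using the Quillen-equivalence preservation result \cref{the:localization of Quillen equiv}. The natural choice is $T = \VEmb(S)$, the image of $S$ under the value embedding $\VEmb = (\iphi)^*$. This is motivated by the elementary identity $\fDiag \circ \VEmb = \id_{\ss}$: from $\phiDiag([n],[l]) = ([n],[l],[l])$ and $\iphi([k],[n],[l]) = ([k],[l])$ one computes $\fDiag(\VEmb(X))_{n,l} = \VEmb(X)_{n,l,l} = X_{n,l}$. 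Since $\VEmb$ preserves inclusions (it merely reindexes), $T$ is a set of cofibrations in $\sss$, so the Bousfield localization is well-defined and yields a combinatorial left proper simplicial model structure whose cofibrations are the inclusions, giving part~(1).

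For part~(4), I would apply \cref{the:localization of Quillen equiv} to the simplicial Quillen equivalence $\fDiag : \sss^{DiagRee} \rightleftarrows \ss^{Ree} : (\phiDiag)_*$ (\cref{The Quillen equivalence of Reedy model structures}) using $T$ as the localizing set on the source. The theorem produces a Quillen equivalence between $\sss^{DiagRee_T}$ and $\ss^{Ree_{\fDiag(T)}}$, and since $\fDiag(T) = \fDiag(\VEmb(S)) = S$, the target is precisely $\ss^{Ree_S}$. For part~(3), the standard characterization of fibrant objects in a left Bousfield localization says that $W$ is fibrant in $\sss^{DiagRee_T}$ iff it is $\sss^{DiagRee}$-fibrant and local with respect to $T = \VEmb(S)$; unwinding the definition of ``local with respect to $S$'' for bisimplicial spaces from \cref{not:local}, this is exactly the condition that $W$ is diagonal Reedy fibrant and local with respect to $S$.

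The main obstacle is part~(2), the diagonal characterization of weak equivalences. I plan to leverage the Quillen equivalence established in part~(4): since cofibrations on both sides are inclusions, every object is cofibrant, so the left Quillen functor $\fDiag$ preserves weak equivalences unconditionally, giving one direction. For the converse, that $\fDiag(g)$ being an $S$-localized Reedy equivalence forces $g$ to be a weak equivalence in $\sss^{DiagRee_T}$, I would consider the derived unit $\eta_X : X \to (\phiDiag)_*(R\fDiag X)$, where $R$ denotes a fibrant replacement functor in $\ss^{Ree_S}$; by the Quillen equivalence each $\eta_X$ is a weak equivalence in $\sss^{DiagRee_T}$. If $\fDiag(g)$ is a weak equivalence, then $R\fDiag(g)$ is a weak equivalence between $\ss^{Ree_S}$-fibrant objects, so the right Quillen functor $(\phiDiag)_*$ carries it to a weak equivalence between $\sss^{DiagRee_T}$-fibrant objects; naturality of $\eta$ together with the 2-out-of-3 property then forces $g$ itself to be a weak equivalence, completing the proof.
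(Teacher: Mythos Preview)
Your proposal is correct. The route differs from the paper's in one structural choice: the paper does not explicitly identify a localizing set in $\sss$. Instead it observes that the composite $\fDiag : \sss^{DiagRee} \rightleftarrows \ss^{Ree_S} : (\phiDiag)_*$ is a simplicial Quillen adjunction with fully faithful derived right adjoint, and then invokes \cite[Corollary A.3.7.10]{lurie2009htt} once to obtain the localized model structure on $\sss$ together with all four properties (cofibrations, description of weak equivalences via $\fDiag$, description of fibrant objects, and the Quillen equivalence) simultaneously. Your approach instead names the localizing set $T = \VEmb(S)$ up front, appeals to the paper's own \cref{the:localization of Quillen equiv} for part~(4), reads off part~(3) from \cref{not:local}, and then runs a separate derived-unit/2-out-of-3 argument for part~(2). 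The trade-off is that the paper's citation of Lurie is a one-shot black box, while your argument stays entirely within the paper's stated results at the cost of a little more bookkeeping; in particular your identity $\fDiag \circ \VEmb = \id_{\ss}$ is exactly what makes $\fDiag(T)=S$ and hence makes \cref{the:localization of Quillen equiv} land in $\ss^{Ree_S}$, which is a pleasant explicit check the paper's proof does not need.
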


\begin{proof}
  By \cref{The Quillen equivalence of Reedy model structures} we have a simplicial Quillen equivalence
  \begin{center}
  	  \adjun{(\sss)^{DiagRee}}{\ss^{Ree}}{\fDiag =(\phiDiag)^*}{(\phiDiag)_*}
  \end{center}
 which gives us a simplicial Quillen adjunction with fully faithful derived right adjoint
 \begin{center}
	\adjun{(\sss)^{DiagRee}}{\ss^{Ree_S}}{\fDiag =(\phiDiag)^*}{(\phiDiag)_*}.
 \end{center}
 Hence, by \cite[Corollary A.3.7.10]{lurie2009htt} there exists a new unique simplicial, combinatorial, left proper 
 model structure on $\sss$, the {\it diagonal $S$-localized model structure}, which satisfies 
 following conditions:
 \begin{enumerate}
 	\item Cofibrations are monomorphisms.
 	\item Weak equivalences are $S$-localized diagonal equivalences.
 	\item The adjunction $(\fDiag =(\phiDiag)^*,(\phiDiag)_*)$ is a simplicial Quillen equivalence between this model structure 
 	and the $S$-localized Reedy model structure on $\ss$.
 	\item An object $X$ is fibrant if it is biReedy fibrant and biReedy equivalent to $(\phiDiag)_*(Y)$ where $Y$ is $S$-local, 
 	meaning that $X$ is fibrant in the diagonal Reedy model structure and $S$-local. \qedhere
 \end{enumerate}
\end{proof}

\begin{theone} \label{The Localized Reedy contravariant model structure} 
 Let $S$ be a set of monomorphisms of simplicial spaces.
 There is a unique simplicial combinatorial left proper model structure on $\sss_{/X}$, denoted by $(\sss_{/X})^{ReeCov_S}$ and called the 
 {\it $S$-localized Reedy covariant model structure}, defined as follows. 
 \begin{itemize}
  \item[C] A map $Y \to Z$ over $X$ is a cofibration if it is an inclusion.
  \item[F] An object $Y \to X$ is fibrant if it is a Reedy left fibration and local with respect to $S$.
  \item[W] A map $Y \to Z$ over $X$ is a weak equivalence if for every fibrant object $W \to X$ the map 
  $$\Map_{/X}(Z, W) \to \Map_{/X}(Y,W)$$
  is a Kan equivalence.
 \end{itemize}
\end{theone}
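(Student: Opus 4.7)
The plan is to construct this model structure as a left Bousfield localization of the Reedy covariant model structure established in \cref{The Reedy Covariant Model Structure}. Since the latter is simplicial, combinatorial, and left proper, all the hypotheses of Hirschhorn's localization theorem \cite[Theorem 4.1.1]{hirschhorn2003modelcategories} are satisfied, and we only need to specify a set of maps to localize at.

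First, I would form the set $S_{/X}$ of morphisms in $\sss_{/X}$ consisting of all maps of the form $\VEmb(A) \to \VEmb(B)$ over $X$, where $(A \to B)$ ranges over $S$ and $\VEmb(B) \to X$ ranges over the (set of) morphisms of bisimplicial spaces from $\VEmb(B)$ to $X$. This is genuinely a set because $\Hom_{\sss}(\VEmb(B), X)$ is a set and $S$ is a set. Applying Hirschhorn's theorem to $(\sss_{/X})^{ReeCov}$ at $S_{/X}$ immediately produces a simplicial combinatorial left proper model structure whose cofibrations are the monomorphisms (giving (C)) and whose weak equivalences are exactly the maps $Y \to Z$ over $X$ such that $\Map_{/X}(Z,W) \to \Map_{/X}(Y,W)$ is a Kan equivalence for every fibrant $W \to X$ (giving (W)).

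The core step is identifying the fibrant objects as in (F). By the general theory of left Bousfield localization, an object $Y \to X$ is fibrant in the localized model structure if and only if it is fibrant in $(\sss_{/X})^{ReeCov}$ (hence a Reedy left fibration, by \cref{The Reedy Covariant Model Structure}) and is local with respect to every map in $S_{/X}$. Locality against $S_{/X}$ unwinds to saying that for every $(A \to B)$ in $S$ and every map $\VEmb(B) \to X$, the induced map
\[
\Map_{/X}(\VEmb(B), Y) \to \Map_{/X}(\VEmb(A), Y)
\]
is a Kan equivalence, which, by the equivalent reformulation recorded in \cref{not:local}, is precisely the condition that $Y \to X$ is local with respect to $S$ in the sense of that notation.

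The main obstacle is really just this last bookkeeping step: one must check that localizing at the over-$X$ set $S_{/X}$ recovers the intended notion of $S$-locality for a map of bisimplicial spaces, rather than a potentially weaker or stronger condition. Once the two formulations of locality in \cref{not:local} are confirmed to agree (which is a direct mapping space computation using the pullback description of $\Map_{/X}$), the theorem follows by reading off uniqueness, properness, combinatoriality, and simplicial enrichment from Hirschhorn's construction.
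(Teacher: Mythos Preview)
Your proof is correct and follows essentially the same route as the paper: both apply Hirschhorn's left Bousfield localization theorem to the Reedy covariant model structure with respect to the set $\mathcal{L}=\{\VEmb(A)\to\VEmb(B)\to X:\ A\to B\in S\}$, which is exactly your $S_{/X}$. You are simply more explicit than the paper in verifying that $S_{/X}$-locality coincides with the notion of $S$-locality from \cref{not:local}, a step the paper leaves implicit.
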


\begin{proof}
	Notice the model structure on the category $\sss_{/X}$ is still proper and cellular \cite[Proposition 12.1.6]{hirschhorn2003modelcategories} 
	and so we can apply left Bousfield localization	\cite[Theorem 4.1.1]{hirschhorn2003modelcategories} with respect to the set of morphisms
    $\mathcal{L} = \{ \VEmb(A) \to \VEmb(B) \to X : A \to B \in S \}.$
\end{proof}
 
 Combining \cref{The DiagRee local ReeCov} with \cref{the:localization of Quillen equiv}
 gives us following similar result.
 
 \begin{propone} \label{propone:local relative gives local}
  The following adjunction
  \begin{center}
   \adjun{(\sss_{/X})^{ReeCov_S}}{(\sss_{/X})^{DiagRee_S}}{id}{id}
  \end{center}
  is a Quillen adjunction, which is a Quillen equivalence whenever $X$ is homotopically constant. 
  Here the left hand side has the localized Reedy covariant model structure and the left hand side 
  has the induced diagonal localized Reedy model structure over the base $X$.
 \end{propone}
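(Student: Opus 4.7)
The plan is to obtain this proposition as a direct corollary of \cref{The DiagRee local ReeCov} and \cref{the:localization of Quillen equiv}. Concretely, I would start with the Quillen adjunction
\[
\id : (\sss_{/X})^{ReeCov} \rightleftarrows (\sss_{/X})^{DiagRee} : \id
\]
provided by \cref{The DiagRee local ReeCov}, which is a Quillen equivalence in the homotopically constant case. Then I would invoke \cref{the:localization of Quillen equiv} with the localizing set $\mathcal{L} = \{\VEmb(A)\to\VEmb(B)\to X : (A\to B)\in S\}$ used in the proof of \cref{The Localized Reedy contravariant model structure}. Since the left adjoint is the identity, $\mathcal{L}$ and its image coincide, so the theorem yields a Quillen adjunction between the two Bousfield localizations at $\mathcal{L}$, and a Quillen equivalence whenever $X$ is homotopically constant.

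It remains to identify the two localized model structures with the ones in the statement. On the left, the localization of $(\sss_{/X})^{ReeCov}$ at $\mathcal{L}$ is literally $(\sss_{/X})^{ReeCov_S}$, by definition (see the proof of \cref{The Localized Reedy contravariant model structure}). On the right, I need to check that the localization of $(\sss_{/X})^{DiagRee}$ at $\mathcal{L}$ is precisely the slice model structure induced by $\sss^{DiagRee_S}$ over $X$. Both are left Bousfield localizations of $(\sss_{/X})^{DiagRee}$, so by the uniqueness of Bousfield localizations it suffices to compare the classes of fibrant objects: in each case an object $Y\to X$ is fibrant exactly when $Y\to X$ is a fibration in $(\sss)^{DiagRee}$ and $Y$ is local with respect to $S$, using the characterization of fibrant objects in $\sss^{DiagRee_S}$ from \cref{The Diagonal localized Reedy model structure} and the definition of $\VEmb$.

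The main (and essentially the only) obstacle is this last identification, which is a standard fact about the interplay between slicing and Bousfield localization; the key point is that $S$ lives in $\ss$, so the maps $\VEmb(A)\to\VEmb(B)$ exist before slicing and the two localization procedures (localize, then slice vs. slice, then localize at the pulled-back set) produce model structures with the same cofibrations and the same fibrant objects, hence coincide. Once this is verified, the proposition follows immediately by combining the two cited theorems.
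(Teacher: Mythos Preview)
Your proposal is correct and follows exactly the approach the paper takes: the paper simply states that the proposition follows by ``combining \cref{The DiagRee local ReeCov} with \cref{the:localization of Quillen equiv}'' and gives no further argument. Your write-up is in fact more careful than the paper's, since you explicitly address the identification of the right-hand localized model structure with the induced slice of $\sss^{DiagRee_S}$, a step the paper leaves entirely implicit.
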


 One very important instance is the case $X=F(0)$. The theorem shows that $\sss^{ReeCov_S}$ is the same as $\sss^{DiagRee_S}$.

We move on to prove the Grothendieck construction for localized Reedy left fibrations over nerves of categories.
Before that let us recall that an object in the projective model structure on $\Fun(\C,\ss^{Ree_S})^{proj}$ (where $\ss$ has the $S$-localized Reedy model structure) is fibrant if it is fibrant in the projective model structure on $\Fun(\C,\ss^{Ree})^{proj}$ (where now $\ss$ has the Reedy model structure) and is local with respect to natural transformations 
$$\id \times f: \Hom_\C(c,-) \times A \to \Hom_\C(c,-) \times B$$
for all objects $c$ and maps $f: A \to B$ in $S$. 

\begin{theone} \label{the:simplicial groth construction localized}
	Let $\C$ be a small category. Then the adjunctions defined in \cref{the:simplicial groth construction}
	 \begin{center}
		\begin{tikzcd}[row sep=0.5in, column sep=0.9in]
			\Fun(\C,\ss^{Ree_S})^{proj} \arrow[r, shift left = 1.8, "\ssint_\C"] & 
			(\sss_{/N\C})^{ReeCov_S} \arrow[l, shift left=1.8, "\ssH_\C", "\bot"'] \arrow[r, shift left=1.8, "\ssbT_\C"] &
			\Fun(\C,\ss^{Ree_S})^{proj} \arrow[l, shift left=1.8, "\ssbI_\C", "\bot"']
		\end{tikzcd}
	\end{center}
 are simplicial Quillen equivalences. Here the middle has the $S$-localized Reedy covariant model structure
 and the two sides have the projective model structure on the $S$-localized Reedy model structure. 
\end{theone}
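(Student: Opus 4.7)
The strategy is to apply \cref{the:localization of Quillen equiv} to the two Quillen equivalences of \cref{the:simplicial groth construction}. For the first adjunction, start with the simplicial Quillen equivalence $(\ssint_\C, \ssH_\C)$ between $\Fun(\C,\ss^{Ree})^{proj}$ and $(\sss_{/N\C})^{ReeCov}$. As recalled in the paragraph preceding the theorem statement, $\Fun(\C,\ss^{Ree_S})^{proj}$ is precisely the Bousfield localization of $\Fun(\C,\ss^{Ree})^{proj}$ at the set
\[
S' = \{\id_{\Hom_\C(c,-)} \times f \colon \Hom_\C(c,-) \times A \to \Hom_\C(c,-) \times B \ : \ c \in \C,\ (f \colon A \to B) \in S\}.
\]
Since both model structures involved are simplicial and combinatorial with cofibrations the monomorphisms, \cref{the:localization of Quillen equiv} applies and yields a simplicial Quillen equivalence between $\Fun(\C,\ss^{Ree_S})^{proj}$ and $(\sss_{/N\C})^{ReeCov}$ localized at $\ssint_\C(S')$.

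It then remains to verify that this latter localization coincides with $(\sss_{/N\C})^{ReeCov_S}$, which by construction is the localization at $\mathcal{L} = \{\VEmb(A) \to \VEmb(B) \to N\C : (A \to B) \in S\}$. The key computational input is that $\ssint_\C$ is compatible with products by constant $\C$-diagrams, so
\[
\ssint_\C(\Hom_\C(c,-) \times A) \cong R_c \times_{N\C} \VEmb(A),
\]
where $R_c := \ssint_\C(\Hom_\C(c,-))$ is a Reedy right fibration over $N\C$ (being the Grothendieck integral of a projectively cofibrant representable). Thus $\ssint_\C(S')$ consists of pullback-product maps of $\VEmb(f)$ with Reedy right fibrations. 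Using \cref{The Pullback preserves Reedy cov equiv}, which states that pullback along a Reedy right fibration preserves the Reedy covariant model structure, one verifies that a Reedy left fibration $L \to N\C$ is local with respect to $\mathcal{L}$ if and only if it is local with respect to $\ssint_\C(S')$. Since the two left Bousfield localizations of the same underlying model structure thus share the same fibrant objects, they coincide.

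The same argument applied verbatim to the second adjunction $(\ssbT_\C, \ssbI_\C)$ handles the other half of the theorem; since the product compatibility of $\ssbT_\C$ with constant diagrams is parallel to that of $\ssint_\C$, the image of the localizing set agrees with $\mathcal{L}$ after the same identification. The main obstacle is this identification of the two localizing sets, which reduces to unpacking the Grothendieck construction on representables and invoking the covariant invariance of fibered products along Reedy right fibrations; once that is in hand, the theorem is a direct application of \cref{the:localization of Quillen equiv}.
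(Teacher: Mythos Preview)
Your strategy via \cref{the:localization of Quillen equiv} is a legitimate alternative to the paper's argument, which instead verifies the Quillen adjunction directly and then exploits the natural equivalence $\ssbT_\C \circ \ssint_\C \simeq \id$ together with an explicit fibrant replacement to check the derived unit and counit by hand. Your route is more systematic; the paper's is more ad hoc but avoids having to match up two localizing sets exactly.

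However, your execution has a genuine error. The object $R_c = \ssint_\C(\Hom_\C(c,-))$ is the nerve of the under-category $c/\C$, and the projection $N(c/\C)\to N\C$ is a \emph{left} fibration, not a right fibration; your parenthetical justification (``being the Grothendieck integral of a projectively cofibrant representable'') only gives cofibrancy, never fibrancy of either kind. Consequently \cref{The Pullback preserves Reedy cov equiv} does not apply, and the fibered-product formula $R_c \times_{N\C} \VEmb(A)$ is not the right expression (there is no given map $\VEmb(A)\to N\C$); the correct identity is the absolute product $\ssint_\C(\Hom_\C(c,-)\times A) \cong N(c/\C) \times \VEmb(A)$, mapped to $N\C$ via the first factor. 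The fix is to observe that $c/\C$ has an initial object, so $\{c\} \hookrightarrow N(c/\C)$ is a covariant equivalence over $N\C$; then \cref{Lemma Exp of Reedy left fib is Reedy left fib} shows $N(c/\C)\times\VEmb(f)$ and $\{c\}\times\VEmb(f)$ are Reedy covariantly equivalent, placing $\ssint_\C(S')$ inside the $\mathcal{L}$-equivalences. For the converse you must also note that any map $\VEmb(B)\to \LEmb(N\C)$ necessarily factors through the objects of $\C$ (since the source is constant in the second simplicial direction), so every map in $\mathcal{L}$ is of the form $\{c\}\times\VEmb(f)$ up to decomposition over components. Finally, the second adjunction cannot be handled ``verbatim'': there $\ssbT_\C$ is the left adjoint going the \emph{other} way, so \cref{the:localization of Quillen equiv} must be applied with the localizing set $\mathcal{L}$ on the $\sss_{/N\C}$ side and one must identify $\ssbT_\C(\mathcal{L})$ with $S'$, which is a different (if parallel) computation.
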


\begin{proof}
    First we show both are Quillen adjunctions. As both left adjoints still preserve cofibrations by \cite[Corollary A.3.7.2]{lurie2009htt} it suffices to prove that the right adjoints preserve fibrant objects. By \cref{the:simplicial groth construction}, the right adjoints preserve fibrant objects in the unlocalized model structures, so we only need to confirm that they preserve local objects.
    
    Using the adjunctions this is equivalent to proving the following statements: 
    $$\ssint_\C \Hom_\C(c,-) \times A \to \ssint_\C \Hom_\C(c,-) \times B$$
    is an $S$-localized Reedy covariant equivalence over $\C$ and
    $$\ssT_\C(\VEmb(A)) \to \ssT_\C(\VEmb(B))$$
    is an $S$-localized projective equivalence. 
    Both of those are immediate computations.
    
	We now move on to proving they are Quillen equivalences.
	The composition map $\ssbT_\C \circ \ssint_\C$ is naturally equivalent to the identity functor and so is a Quillen equivalence.
	Hence it suffices to prove that the adjunction $(\sint_\C,\ssH_\C)$ is a Quillen equivalence. 
	
	By \cref{the:simplicial groth construction}, the counit map is an equivalence. 
	So, we move on to the derived unit map. Again, by \cref{the:simplicial groth construction}, for a fibrant object 
	$F: \C \to \ss$, the map $\ssint_\C F \to \ssbI_\C F$ is a biReedy equivalence and hence a localized 
	Reedy covariant equivalence. Hence $\ssbI_\C F$ is a fibrant replacement of $\ssint_\C F$.
	Thus, the derived unit map is given by 
	$$F \to \ssH_\C \ssbI_\C F,$$
	which is indeed an equivalence as it is naturally equivalent to the identity as explained above. 
\end{proof}
 
 The result has several important corollaries that we will use in the next subsection.
 
 \begin{corone}
 	Let $p:L \to N\C$ be a Reedy left fibration. Then $p$ is an $S$-localized Reedy left fibration if and only if 
 	it is fiberwise diagonal $S$-localized Reedy fibrant.
 \end{corone}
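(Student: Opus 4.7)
The plan is to pass through the Grothendieck construction Quillen equivalence just established and then apply \cref{propone:local relative gives local} over a homotopically constant base.

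First, I would reinterpret the $S$-localized condition on $p$ via \cref{the:simplicial groth construction localized}: the bisimplicial space $L\to N\C$ is fibrant in $(\sss_{/N\C})^{ReeCov_S}$ exactly when $\ssH_\C(L)$ is fibrant in $\Fun(\C,\ss^{Ree_S})^{proj}$. Using the characterization of fibrant objects in this localized projective model structure recalled immediately before \cref{the:simplicial groth construction localized}, and remembering that $\ssH_\C(L)$ is automatically projectively Reedy-fibrant once $p$ is a Reedy left fibration, this reduces to the single requirement that each value $\ssH_\C(L)(c)$ be $S$-local in $\ss^{Ree}$.

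Second, for each object $c \in \C$ I would identify $\ssH_\C(L)(c)$ with the value $\Val(L_c)$ of the strict fiber $L_c := F(0)\times_{N\C}L$ taken over the vertex $c\colon F(0)\to N\C$. This is an unwinding of the levelwise definition of $\ssH_\C$ from \leftref{leftitem:groth} as promoted to bisimplicial spaces in \cref{the:simplicial groth construction}, together with the identification of $\Val$ with evaluation at the $0$-th simplicial direction of the $\phi$-coordinate. Granted this, $\ssH_\C(L)(c)$ is $S$-local if and only if $L_c$ is $S$-local as a bisimplicial space, i.e.\ if and only if $L_c$ is local in the sense of \cref{not:local}.

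Third, I would translate this fiber condition into the stated fiberwise diagonal $S$-localized Reedy fibrancy via \cref{propone:local relative gives local}. Since $F(0)$ is homotopically constant, the identity is a Quillen equivalence between $(\sss_{/F(0)})^{ReeCov_S}$ and $(\sss_{/F(0)})^{DiagRee_S}$, so the two model structures share the same fibrant objects. Because $L_c\to F(0)$ is automatically a Reedy left fibration by base change from $p$, the only remaining condition for fibrancy in $(\sss_{/F(0)})^{ReeCov_S}$ is $S$-locality of $L_c$, and by the Quillen equivalence this is the same as fibrancy in $(\sss_{/F(0)})^{DiagRee_S}$. Combining the three steps over every vertex $c$ yields both implications of the corollary.

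The main obstacle I expect is the second step: verifying cleanly that $\ssH_\C(L)(c)$ is (weakly equivalent to) the honest pullback fiber $L_c$ under $\Val$. This is bookkeeping rather than a conceptual difficulty, but it is where the actual content of the corollary lives; everything else is formal manipulation of the localization machinery developed in \cref{Sec The Reedy Covariant Model Structure} and the earlier part of \cref{Sec Localizations of Reedy Right Fibrations}.
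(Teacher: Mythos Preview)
Your proposal is correct and follows the same route the paper intends: the corollary is stated without proof, and the argument the paper has in mind (made explicit in the proof of \cref{lemma:localized left local}) is exactly to pass through the Grothendieck Quillen equivalence of \cref{the:simplicial groth construction localized}, identify values of the associated functor with fibers, and read off $S$-locality objectwise.

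One small remark: you flag Step~2 as the main obstacle, but Step~1 also deserves a word. A Quillen equivalence by itself does not give ``$L$ fibrant $\Leftrightarrow$ $\ssH_\C(L)$ fibrant''; the forward implication is immediate, but for the converse you need the observation recorded inside the proof of \cref{the:localization of Quillen equiv} (that an $\N$-fibrant object whose image under the right adjoint is local is itself local in the localized model structure). Alternatively, and this is what the paper does in the proof of \cref{lemma:localized left local}, you can sidestep the issue by replacing $L$ up to biReedy equivalence by $\ssint_\C G$ with $G=\ssH_\C(L)$ (using \cref{the:simplicial groth construction} and the biReedy equivalence $\ssint_\C G \to \ssbI_\C G$ from the proof of \cref{the:simplicial groth construction localized}), and then use that \emph{both} right adjoints $\ssH_\C$, $\ssbI_\C$ preserve fibrants together with $\ssH_\C\ssbI_\C\simeq\id$. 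Either way the argument goes through; your outline is sound.
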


\begin{corone}
	A map of bisimplicial spaces $Y \to Z$ over $N\C$ is $S$-localized Reedy covariant equivalence if and only if 
	the map 
	$$Y \times_{N\C} N\C_{/c} \to Z \times_{N\C} N\C_{/c} $$
	is a diagonal $S$-localized Reedy equivalence. 
\end{corone}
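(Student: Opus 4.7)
The plan is to derive the corollary as a translation of the Quillen equivalence from \cref{the:simplicial groth construction localized}. Since cofibrations are monomorphisms in both $(\sss_{/N\C})^{ReeCov_S}$ and $\Fun(\C,\ss^{Ree_S})^{proj}$, every object is cofibrant, so the left Quillen equivalence $\ssbT_\C$ both preserves and reflects weak equivalences. Hence $Y \to Z$ is an $S$-localized Reedy covariant equivalence over $N\C$ if and only if $\ssbT_\C(Y) \to \ssbT_\C(Z)$ is an $S$-localized projective weak equivalence, which by definition of the projective model structure holds if and only if $\ssbT_\C(Y)(c) \to \ssbT_\C(Z)(c)$ is an $S$-localized Reedy equivalence of simplicial spaces for every object $c \in \C$.

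The remaining task is to identify the value $\ssbT_\C(-)(c)$ with $\fDiag(- \times_{N\C} N\C_{/c})$ up to Reedy equivalence, so that the pointwise $S$-localized Reedy equivalence criterion on the $\Fun(\C,\ss^{Ree_S})^{proj}$ side translates into the diagonal $S$-localized Reedy equivalence criterion for the pullbacks on the bisimplicial side. Since $\ssbT_\C$ is the level-wise extension of the functor $\sbT_\C$ from \leftref{leftitem:groth}, and since $\sbT_\C(L)(c)$ is (up to covariant equivalence) computed by pullback along the representable right fibration $N\C_{/c} \to N\C$, the desired identification follows by unwinding the definitions of $\ssbT_\C$, of $\fDiag = (\phiDiag)^*$, and of the pullback level by level.

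The principal obstacle will be precisely this identification step: verifying that the natural comparison map between $\fDiag(Y \times_{N\C} N\C_{/c})$ and $\ssbT_\C(Y)(c)$ is a Reedy equivalence. A natural strategy is to apply the recognition principle for covariant equivalences \leftref{leftitem:recognition} level-wise; alternatively, using the previous corollary one may reduce to comparing $S$-localized Reedy covariant fibrant replacements of both sides, which by construction are themselves detected by pullback along $N\C_{/c}$.
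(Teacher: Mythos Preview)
Your approach is correct and is exactly what the paper intends: the corollary is stated immediately after \cref{the:simplicial groth construction localized} with no proof, so the paper's argument is precisely ``unwind the Quillen equivalence,'' which is what you do.

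One small correction: your claim that cofibrations in $\Fun(\C,\ss^{Ree_S})^{proj}$ are monomorphisms is false in general (projective cofibrations are retracts of relative cell complexes, and not every object is projectively cofibrant). Fortunately you do not need this. What you need is only that every object of the \emph{domain} $(\sss_{/N\C})^{ReeCov_S}$ is cofibrant; then the left Quillen equivalence $\ssbT_\C$ computes its own left derived functor on the nose, hence preserves and reflects weak equivalences. The target side plays no role in this step.

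On the identification step you flag as the obstacle: in \cite{rasekh2017left} the functor $\sbT_\C$ is set up so that $\sbT_\C(Y)(c)$ is (naturally weakly equivalent to) the diagonal of $Y \times_{N\C} N(\C_{/c})$; applying this level-wise gives $\ssbT_\C(Y)(c) \simeq \fDiag(Y \times_{N\C} N\C_{/c})$ as a Reedy equivalence of simplicial spaces, so no further argument via \leftref{leftitem:recognition} or fibrant replacements is needed. Once you cite that description, the corollary is immediate.
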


If we let $\C = [n]$, then $N[n] = F(n)$ and so we can use the results above immediately to understand 
the $S$-localized Reedy covariant model structure over $F(n)$. 
For many applications, however, this is not good enough. We want to understand $f$-localized 
Reedy left fibrations over $F(n) \times \Delta[l]$. For that we have following result:

\begin{corone}
	The Reedy equivalence $\pi_1: F(n) \times \Delta[l] \to F(n)$ induces a Quillen equivalence 
	\begin{center}
		\adjun{(\sss_{/F(n) \times \Delta[l]})^{ReeCov_S}}{(\sss_{/F(n)})^{ReeCov_S}}{(\pi_1)_!}{(\pi_1)^*}
	\end{center}
   and so, in particular, every $S$-localized Reedy left fibration is biReedy equivalent to a map of the form 
   $p \times \Delta[l]: \ssint_\C G \times \Delta[l] \to F(n) \times \Delta[l]$, where $G: \C \to \ss$ 
   is a fibrant object in the projective model structure.
\end{corone}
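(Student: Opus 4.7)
The plan is to deduce this corollary by combining the base-change result for (unlocalized) Reedy covariant model structures with the general localization-of-Quillen-equivalences principle, and then to read off the concrete description of local objects by pulling back the Grothendieck construction from \cref{the:simplicial groth construction localized} applied to $\C = [n]$.

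First, I would verify that $\pi_1: F(n) \times \Delta[l] \to F(n)$ is a CSS equivalence; in fact it is already a Reedy equivalence, since the projection $\Delta[l] \to F(0)$ (viewing $\Delta[l]$ as a constant simplicial space via \spaceref{item:embed}) is a level-wise Kan equivalence, and Reedy equivalences of simplicial spaces are closed under product with a fixed simplicial space. By \cref{The Base change Reedy left} this gives a Quillen equivalence
\begin{center}
\adjun{(\sss_{/F(n) \times \Delta[l]})^{ReeCov}}{(\sss_{/F(n)})^{ReeCov}}{(\pi_1)_!}{(\pi_1)^*}
\end{center}
between the unlocalized Reedy covariant model structures.

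Next I would upgrade this to the $S$-localized setting by invoking \cref{the:localization of Quillen equiv}. The set of localizing maps used to build $(\sss_{/F(n)})^{ReeCov_S}$ (see the proof of \cref{The Localized Reedy contravariant model structure}) is of the form $\VEmb(A) \to \VEmb(B) \to F(n)$ for $A \to B$ in $S$; applying $(\pi_1)_! = \pi_1 \circ -$ to such a map lands in the analogous localizing set over $F(n) \times \Delta[l]$ (up to reindexing along $\pi_1$, the map $\VEmb(A) \to \VEmb(B)$ pulled back to $F(n)\times\Delta[l]$ generates the same Bousfield localization, since value-embeddings commute with base change). Hence \cref{the:localization of Quillen equiv} yields the desired Quillen equivalence
\begin{center}
\adjun{(\sss_{/F(n) \times \Delta[l]})^{ReeCov_S}}{(\sss_{/F(n)})^{ReeCov_S}}{(\pi_1)_!}{(\pi_1)^*}.
\end{center}

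For the concrete description, I would apply \cref{the:simplicial groth construction localized} with $\C = [n]$, noting that $N[n] = F(n)$. This tells us that every $S$-localized Reedy left fibration over $F(n)$ is biReedy equivalent to one of the form $\ssint_{[n]} G \to F(n)$ for some $G : [n] \to \ss$ fibrant in the $S$-localized projective model structure. Pulling back along $\pi_1$ and using that $(\pi_1)^*$ is pullback along a projection, one gets $(\pi_1)^*(\ssint_{[n]} G) \cong \ssint_{[n]} G \times \Delta[l]$ over $F(n)\times\Delta[l]$; the Quillen equivalence above then implies that every $S$-localized Reedy left fibration on the left is biReedy equivalent to such a product, completing the proof.

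The only mildly delicate point is the compatibility of the two localizing sets under $(\pi_1)_!$, but this is an immediate consequence of the fact that $\VEmb$ is defined as a pullback functor and so commutes with base change along $\pi_1$; no genuine obstacle arises.
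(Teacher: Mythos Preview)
Your overall strategy is the natural one and matches what the paper (which states this corollary without proof) has set up: use \cref{The Base change Reedy left} for the unlocalized Quillen equivalence, upgrade via \cref{the:localization of Quillen equiv}, and then read off the fibrant objects using \cref{the:simplicial groth construction localized} with $\C=[n]$ together with $(\pi_1)^*(-)\cong(-)\times\Delta[l]$. The second half of your argument is fine.

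There is one genuine slip in the localization step. You write that you apply $(\pi_1)_! = \pi_1\circ-$ to the localizing maps $\VEmb(A)\to\VEmb(B)\to F(n)$ and land over $F(n)\times\Delta[l]$; but $(\pi_1)_!$ goes the other way, from $\sss_{/F(n)\times\Delta[l]}$ to $\sss_{/F(n)}$. In \cref{the:localization of Quillen equiv} the set $S$ lives in the \emph{source} of the left adjoint, and the target gets localized at $F(S)$. So you must start with the localizing set
\[
\mathcal{L}=\{\VEmb(A)\to\VEmb(B)\to F(n)\times\Delta[l]\ :\ (A\to B)\in S\}
\]
over $F(n)\times\Delta[l]$, push forward by $(\pi_1)_!$ (i.e.\ postcompose with $\pi_1$), and then verify that $(\pi_1)_!(\mathcal{L})$ generates the same Bousfield localization as the standard set $\{\VEmb(A)\to\VEmb(B)\to F(n)\}$. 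This last verification is immediate because $\pi_1$ admits a section (pick any vertex of $\Delta[l]$), so every structure map $\VEmb(B)\to F(n)$ lifts to $F(n)\times\Delta[l]$ and the two sets literally coincide. Your parenthetical about ``pulling back'' and ``value-embeddings commuting with base change'' is not the relevant mechanism here; once you reverse the direction the argument becomes the one-line observation above, and no further compatibility check is needed.
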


We will use the local results in the next subsection to study $S$-localized Reedy left fibrations and their equivalences 
over arbitrary simplicial spaces. 

\subsection{Understanding the Localized Reedy Covariant Model Structure} \label{subsec:understanding things}
In this subsection we want to study the fibrant objects and weak equivalences in the $S$-localized 
Reedy covariant model structure over an arbitrary simplicial space $X$.

Some results will require some conditions on the set of maps $S$, which we will fix now.

\begin{notone} \label{not:conditions on f}
	Let $S$ be a set of monomorphisms of simplicial spaces.
	\begin{itemize}
		\item {\bf (S)}: A map $f$ in $S$ {\it satisfies condition} {\bf (S)} if every homotopically constant 
		simplicial space is local with respect to $f$, which is equivalent to $f$ being an equivalence in the 
		diagonal model structure.  
		The set of maps $S$ {\it satisfies condition} {\bf (S)} if 
		every map in $S$ satisfies condition {\bf (S)}.
		
		\item {\bf (D)}: A map $f: A \to B$  in $S$ {\it satisfies condition} {\bf (D)} if $B$ is diagonally contractible.
		The set of maps $S$ {\it satisfies condition} {\bf (D)} if every map in $S$ satisfies condition {\bf (D)}.
		\item {\bf (C)}: A map $f:A \to B$ {\it  satisfies condition} {\bf (C)} if it satisfies condition {\bf (S)} and {\bf (D)}.
		This an be stated directly as $A$ and $B$ being diagonally contractible.
	    The set of maps $S$ {\it satisfies condition} {\bf (C)} if 
		every map in $S$ satisfies condition {\bf (C)}.
		\item {\bf (P)}: The set of monomorphisms $S$ {\it satisfies condition} {\bf (P)} if $W$ being local with respect to $S$ 
		implies that $W^X$ is local with respect to $S$ for all simplicial spaces $X$.
	\end{itemize}
\end{notone}

\begin{exone} \label{ex:maps satisfy conditions}
	Let us see some examples of maps that satisfy these conditions:
	\begin{enumerate}
		\item The simplicial space $F(n)$ is a diagonally contractible. 
		Hence any map $A \to F(n)$ satisfies condition {\bf (D)}.
		\item The simplicial space $G(n)$ \cite[Section 5]{rezk2001css} is also diagonally contractible. Hence the inclusions $G(n) \to F(n)$ satisfy 
		condition {\bf (C)}.
	    \item The map also satisfies condition {\bf (P)} \cite[Lemma 10.3]{rezk2001css}.
	    \item Let $\C$ be a contractible category. Then any map $F(0) \to N\C$ satisfies condition {\bf (C)}.
	    \item In particular, the map $F(0) \to E(1)$ (\spaceref{item:En}) satisfies condition {\bf (C)}, but also condition {\bf (P)} \cite[Proposition 12.1]{rezk2001css}.
	\end{enumerate}
\end{exone}

We will start with characterizations of $S$-localized Reedy left fibrations. First two lemmas.

\begin{lemone} \label{lemma:localized left local}
  Let $p: L \to X$ be a biReedy fibration.
  Then the following are equivalent:
  \begin{enumerate}
   	\item For every map $\sigma: F(n) \times \Delta[l] \to X$, the pullback map $\sigma^*p: \sigma^*L \to F(n) \times \Delta[l]$
   	is an $S$-localized Reedy left fibration.
    \item For every map $\sigma: F(n) \to X$, the pullback map $\sigma^*p: \sigma^*L \to F(n)$
    is an $S$-localized Reedy left fibration.
    \item $p$ is a Reedy left fibration and for every point $\{x\}: F(0) \to X$ the fiber 
    $\Fib_xL$ is fibrant in the diagonal $S$-localized Reedy model structure.
    \item $p$ is a Reedy left fibration and for every point $\{x\}: F(0) \to X$ the fiber 
    $\Val(\Fib_xL)$ is fibrant in the $S$-localized Reedy model structure.
  \end{enumerate}
\end{lemone}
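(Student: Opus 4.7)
The plan is to prove the chain $(1) \Leftrightarrow (2) \Leftrightarrow (3) \Leftrightarrow (4)$. In all four conditions, the underlying Reedy-left-fibration assertion on $p$ (or on pullbacks $\sigma^*p$) is equivalent by \cref{lemma:local Reedy covar}, so the real content is tracking the $S$-locality condition; throughout the plan I assume $p$ is a Reedy left fibration and focus on its fibers and pullbacks.

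The implication $(1) \Rightarrow (2)$ is immediate by setting $l = 0$. For $(2) \Rightarrow (1)$, given $\sigma : F(n) \times \Delta[l] \to X$, I would use that the projection $\pi_1 : F(n) \times \Delta[l] \to F(n)$ is a Reedy equivalence and hence, by the corollary following \cref{the:simplicial groth construction localized}, induces a Quillen equivalence of $S$-localized Reedy covariant model structures. Composing $\sigma$ with the section $s_0 : F(n) \to F(n) \times \Delta[l]$ at $0 \in \Delta[l]$ gives $\sigma_0 : F(n) \to X$, and (2) applied to $\sigma_0$ shows that $\sigma_0^* p$ is an $S$-localized Reedy left fibration over $F(n)$. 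The Quillen equivalence, combined with $\pi_1 \circ s_0 = \id$, exhibits $\sigma^*p$ as biReedy equivalent to $(\pi_1)^*(\sigma_0^*p)$; since the latter is $S$-localized (being the pullback of an $S$-localized Reedy left fibration under the right Quillen functor $(\pi_1)^*$), and $S$-locality is preserved under biReedy equivalence among biReedy fibrant objects by a standard property of left Bousfield localizations, $\sigma^*p$ is an $S$-localized Reedy left fibration.

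For $(2) \Leftrightarrow (3)$, I would invoke the Grothendieck construction \cref{the:simplicial groth construction localized} over $F(n) = N[n]$: a Reedy left fibration $\sigma^*p \to F(n)$ is $S$-localized iff the corresponding functor $G = \ssH_{[n]}(\sigma^*p) : [n] \to \ss$ is projective fibrant in $\ss^{Ree_S}$, i.e., each value $G(i)$ is $S$-localized Reedy fibrant. Under the Grothendieck correspondence, each $G(i)$ is identified up to equivalence with $\Val(\Fib_{\sigma(i)} L)$. Since every point $x : F(0) \to X$ appears as $\sigma(i)$ for suitable $\sigma$ and $i$, (2) is equivalent to the condition that $\Val(\Fib_x L)$ is $S$-localized Reedy fibrant for every $x$, which is exactly (4). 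Finally, $(3) \Leftrightarrow (4)$ follows directly from \cref{The Diagonal localized Reedy model structure}(3): an object is fibrant in the diagonal $S$-localized Reedy model structure iff it is fibrant in the diagonal Reedy structure and $S$-local in the bisimplicial sense of \cref{not:local} (i.e., $\Val$ is $S$-local). The defining homotopy-pullback condition for Reedy left fibrations forces $\Fib_x L$ to be homotopically constant in the appropriate direction, so $\Fib_x L$ is automatically diagonal-Reedy fibrant and $\Val(\Fib_x L)$ is Reedy fibrant; thus the $S$-local conditions in (3) and (4) coincide.

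The main technical obstacle will be verifying the Grothendieck identification $G(i) \simeq \Val(\Fib_{\sigma(i)} L)$ used in $(2) \Leftrightarrow (3)$ and cleanly setting up the biReedy equivalence $\sigma^*p \simeq (\pi_1)^*(\sigma_0^*p)$ used in $(2) \Rightarrow (1)$. Both ultimately reduce to invariance properties of the Reedy covariant model structure along Reedy equivalences of the base, combined with explicit computation in the Grothendieck correspondence.
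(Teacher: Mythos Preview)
Your proposal is correct and follows essentially the same route as the paper. Both arguments reduce $(1)\Leftrightarrow(2)$ to the Reedy equivalence $\pi_1:F(n)\times\Delta[l]\to F(n)$ and the resulting invariance of $S$-locality, use the Grothendieck construction over $[n]$ (\cref{the:simplicial groth construction localized}) to pass between $(2)$ and the fiberwise condition, and read off $(3)\Leftrightarrow(4)$ from the description of fibrant objects in \cref{The Diagonal localized Reedy model structure}. Two small remarks: your paragraph labelled ``$(2)\Leftrightarrow(3)$'' in fact establishes $(2)\Leftrightarrow(4)$ and then you close the loop with $(3)\Leftrightarrow(4)$, which is logically fine but worth relabelling; and the biReedy equivalence $\sigma^*p\simeq(\pi_1)^*(\sigma_0^*p)$ you flag as a technical point is most cleanly obtained as a zigzag via the homotopy $s_0\circ\pi_1\simeq\id$ and right properness of the biReedy model structure, rather than directly from the Quillen equivalence---the paper simply asserts this invariance in one line.
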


\begin{proof}
	All four statements break down into two parts: proving $p$ is a Reedy left fibration and proving it is local with respect to $S$.
	The first always follows either by definition or from \cref{lemma:local Reedy covar}. Hence we will only focus on proving it is 
	local with respect to $S$.
	
	{\it (1) $\Leftrightarrow$ (2)}
	This follows from the fact that $\pi_2: F(n) \times \Delta[l] \to F(n)$ is a Reedy equivalence and being local with respect to $S$ is invariant under Reedy equivalences.
	
	{\it (2) $\Leftrightarrow$ (3)}
	One side is immediate, for the other we will use \cref{the:simplicial groth construction localized}.
	Fix a map $\sigma: F(n) \to X$. We want to prove $\sigma^*p: \sigma^*L \to F(n)$ is an $S$-localized Reedy left fibration. 
	By assumption $p: L \to X$ is already a Reedy left fibration, which, by \cref{lemma:local Reedy covar}, implies that $\sigma^*p$ is also 
	a Reedy left fibration. 
	
	Hence, by \cref{the:simplicial groth construction}, $\sigma^*p: \sigma^*L \to F(n)$ is Reedy equivalent to a map 
	$\ssint_{[n]} G \to F(n)$, where $G: [n] \to \ss$ and as the property of being local with respect to $S$ is invariant under Reedy equivalences, $\sigma^*p$ is an $S$-localized Reedy left fibration 
	if and only if $\ssint_{[n]} G$ is local with respect to $S$. 
	By \cref{the:simplicial groth construction localized}, this itself is equivalent to 
	$G$ being fibrant in the projective model structure, which by definition means that 
    for all $0 \leq i \leq n$, $G(i)$ is fibrant in the 
	$S$-localized Reedy model structure. 
	This is directly equivalent to $\sigma^*p$ being fiber-wise fibrant in the diagonal $S$-localized Reedy model structure.
	
	{\it (3) $\Leftrightarrow$ (4)}
	This follows from the definition of fibrant objects in the diagonal $S$-localized model structure on $\sss$. 
\end{proof}

\begin{lemone} \label{lemma:localized left global}
	Assume that $S$ satisfies condition {\bf (S)} and $p: L \to S$ is a biReedy fibration.
	Then the following are equivalent.
	\begin{enumerate}
		\item $p$ is an $S$-localized Reedy left fibration.
		\item $p$ is a Reedy left fibration and the simplicial space $\Val(L)$ is local with respect to $S$.
		\item $p$ is a Reedy left fibration and the simplicial spaces $\Val_k(L)$ are local with respect to $S$ for all $k \geq 0$.
	\end{enumerate}
\end{lemone}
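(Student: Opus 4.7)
The plan is to use \cref{lemma:localized left local} to reduce everything to a fiberwise condition and then invoke condition {\bf (S)} to make the fiberwise condition automatic. The implication (3) $\Rightarrow$ (2) is immediate by taking $k = 0$, so the work lies in showing (1) $\Leftrightarrow$ (2) and (2) $\Rightarrow$ (3).

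For (1) $\Leftrightarrow$ (2): by \cref{lemma:localized left local}(4), $p$ is an $S$-localized Reedy left fibration if and only if $p$ is a Reedy left fibration and for every $x : F(0) \to X$ the fiber $\Val(\Fib_x L) = L_0 \times_X \{x\}$ is fibrant in the $S$-localized Reedy model structure on $\ss$. Since $p$ is a Reedy left fibration, the map $\Val(L) = L_0 \to X$ is a left fibration of simplicial spaces (this is the Reedy left fibration condition at the $k=0$ level), and so each such fiber is a homotopically constant simplicial space. Condition {\bf (S)} guarantees that every homotopically constant simplicial space is $S$-local, so the fiberwise condition in \cref{lemma:localized left local}(4) is automatic. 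On the other hand, the $S$-localness of the simplicial space $\Val(L)$ in (2) is detected by the same fiberwise data since $\Val(L) \to X$ is a left fibration, giving the equivalence (1) $\Leftrightarrow$ (2).

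For (2) $\Rightarrow$ (3): I would apply the same argument at each level $k \geq 0$ rather than only at $k = 0$. By the Reedy left fibration condition, each $\Val_k(L) = L_k \to X$ is a left fibration of simplicial spaces whose fibers are homotopically constant. Condition {\bf (S)} then makes the fibers $S$-local, and running the same fiberwise-to-global analysis as in the previous paragraph, applied at level $k$, shows that $\Val_k(L)$ is $S$-local for every $k$.

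The main obstacle is justifying the step that transfers the (automatic) fiberwise $S$-localness to the global $S$-localness of $\Val_k(L)$. This descent requires carefully combining the left fibration structure of each $\Val_k(L) \to X$ with the local-to-global principle of \cref{lemma:localized left local}, and using the invariance of $S$-localness under the Reedy equivalences entering the fiber computation; this is precisely the place where condition {\bf (S)} does its essential work.
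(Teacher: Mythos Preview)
There is a genuine gap: you have confused $\Val$ with $\LFib$. The functor $\Val$ extracts the $k$-direction, so $\Val(L)$ is the simplicial space $k \mapsto L_{k0}$, living over the constant simplicial space $\Val(X) = X_0$; it is not the left fibration $L_0 \to X$. Correspondingly $\Val(\Fib_x L)$ is the simplicial space $k \mapsto L_{k0} \times_{X_0} \{x\}$, which is precisely the ``value at $x$'' of the presheaf modeled by $p$ and has no reason whatsoever to be homotopically constant. Your argument, as written, would make condition (4) of \cref{lemma:localized left local} automatic for \emph{every} Reedy left fibration, and hence every Reedy left fibration would already be $S$-localized --- which is plainly false. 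There is also a second issue: \cref{lemma:localized left local} only asserts that its four conditions are equivalent to one another; the bridge to ``$p$ is an $S$-localized Reedy left fibration'' is \cref{the:localized left local}, and the direction you need there requires condition {\bf (D)}, which is not among the hypotheses here.

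The paper's proof of $(1) \Leftrightarrow (2)$ avoids fibres entirely. The enriched adjunction $(\VEmb, \Val)$ identifies $\Map_{/X}(\VEmb(B), L) \to \Map_{/X}(\VEmb(A), L)$ with $\Map_{/X_0}(B, \Val(L)) \to \Map_{/X_0}(A, \Val(L))$, and condition {\bf (S)} enters only to guarantee that the constant simplicial space $X_0$ is itself $S$-local, so that $\Val(L)$ being $S$-local relative to $X_0$ is the same as $\Val(L)$ being $S$-local. For $(2) \Rightarrow (3)$ the Reedy left fibration condition gives a Reedy equivalence between $\Val_n(L)$ and a pullback built from $\Val(L)$, $X_0$, and $X_n$; the latter two are $S$-local by {\bf (S)}, and $\Val(L)$ is $S$-local by assumption.
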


\begin{proof}
	{\it (1) $\Leftrightarrow$ (2)}
  	Let $p$ be an $S$-localized Reedy left fibration. 
  We have a commutative diagram 
  \begin{center}
  	\comsq{\Map_{/X}(\VEmb(B),L) }{\Map_{/X}(\VEmb(A),L) }{\Map_{/\Val(X)}(B,\Val(L)) }{\Map_{/\Val(X)}(A,\Val(L))}{}{\cong}{\cong}{}.
  \end{center}
  The vertical maps are bijections using the enriched adjunction $(\VEmb,\Val)$.
  So the top map is an equivalence (which is the 
  definition of a localized Reedy left fibration) if and only if the bottom map is an equivalence (which is equivalent to 
  $\Val(L) \to \Val(X)$ being fibrant in the localized model structure on $\ss_{/\Val(X)}$).
  
  As $S$ satisfies condition {\bf (S)}, $\Val(X) = X_0$ is local with respect to $S$ and so the bottom map being an equivalence is equivalent to 
  $\Val(L)$ being local with respect to $S$, finishing the proof.
  
  {\it (2) $\Leftrightarrow$ (3)}
  One side side is a special case. For the other side, notice we have a Reedy equivalence of simplicial spaces 
  $$\Val_n(L) \simeq \Val(L) \times_{X_n} X_0.$$
  The right hand side is local with respect to $S$ ($\Val(L)$ by assumption and $X_n$, $X_0$ by condition {\bf (S)}), 
  hence the right hand is local as well.
\end{proof}

\begin{theone} \label{the:localized left local}
	If $p$ is an $S$-localized Reedy left fibration, then it satisfies the conditions of \cref{lemma:localized left local}. The opposite holds if $S$ satisfies condition {\bf (D)}. 
\end{theone}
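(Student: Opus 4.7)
The forward direction follows from base change. For each simplex $\sigma: F(n) \to X$, the adjunction $(\sigma_!, \sigma^*)$ is a Quillen adjunction between the unlocalized Reedy covariant model structures by \cref{The Base change Reedy left}. Under $\sigma_!$, each generating local cofibration $\VEmb(A) \to \VEmb(B) \to F(n)$ of $(\sss_{/F(n)})^{ReeCov_S}$ (with $A \to B \in S$) is sent to the corresponding generating local cofibration $\VEmb(A) \to \VEmb(B) \to X$ of $(\sss_{/X})^{ReeCov_S}$. Hence \cref{the:localization of Quillen equiv} produces a Quillen adjunction between the two localized model structures, so $\sigma^*$ preserves fibrant objects and $\sigma^*p$ is an $S$-localized Reedy left fibration. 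This verifies condition (2) of \cref{lemma:localized left local}.

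For the converse under condition \textbf{(D)}, suppose $p$ satisfies condition (3) of \cref{lemma:localized left local}: $p$ is a Reedy left fibration and for every point $x:F(0) \to X$ the fiber $\Fib_x L$ is fibrant in the diagonal $S$-localized Reedy model structure. To prove that $p$ is $S$-localized, fix $A \to B$ in $S$ and a map $g: \VEmb(B) \to X$; I must show that
\[
\Map_{/X}(\VEmb(B), L) \to \Map_{/X}(\VEmb(A), L)
\]
is a Kan equivalence. Pulling back along $g$ yields a Reedy left fibration $g^*L \to \VEmb(B)$ by \cref{lemma:local Reedy covar}, and the mapping spaces of interest identify with the corresponding section spaces over $\VEmb(B)$. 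The essential use of \textbf{(D)} is that $B$ is diagonally contractible, and I would combine this with \cref{propone:local relative gives local} together with the local form of the Grothendieck correspondence in \cref{the:simplicial groth construction localized} to argue that $g^*L$ is controlled up to $S$-localized equivalence by its fiber $\Fib_x L$ at any chosen point $x$ in the image of $g$. This would identify the two mapping spaces above with $\Map_{\ss}(B, \Val(\Fib_x L))$ and $\Map_{\ss}(A, \Val(\Fib_x L))$ respectively, whence the desired equivalence follows from the $S$-locality of $\Val(\Fib_x L)$, which is condition (4) of \cref{lemma:localized left local}.

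The main obstacle is making rigorous the reduction just sketched: that over $\VEmb(B)$ with $B$ diagonally contractible, a Reedy left fibration is captured up to $S$-localized equivalence by its fiber at a single point. Condition \textbf{(D)} is precisely what is required here; without it, the base $\VEmb(B)$ can carry nontrivial diagonal-directional information that a single fiber does not see, so the fiberwise hypothesis of condition (3) alone would not suffice to conclude $S$-locality of $p$.
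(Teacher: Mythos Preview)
Your forward direction is essentially the paper's argument dressed up with more machinery: fibrant objects in the $S$-localized Reedy covariant model structure are stable under pullback. Your invocation of \cref{the:localization of Quillen equiv} is slightly off---that theorem localizes the target at $\sigma_!(S')$, which only produces the maps $\VEmb(A)\to\VEmb(B)\to X$ that happen to factor through $\sigma$, not the full localizing set---but the direct adjunction argument you sketch (that $\sigma^*$ preserves local objects because $\sigma_!$ sends localizing morphisms to localizing morphisms) is correct and suffices.

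The converse is where there is a genuine gap, and the paper's route is quite different from what you outline. You pull back to $g^*L \to \VEmb(B)$ and then appeal to \cref{propone:local relative gives local} and the Grothendieck correspondence to ``control $g^*L$ by a single fiber.'' The trouble is that the entire framework---Reedy left fibrations, the $S$-localized Reedy covariant model structure, \cref{propone:local relative gives local}, \cref{the:simplicial groth construction localized}---is set up over bases of the form $\LEmb(\text{simplicial space})$, and $\VEmb(B)$ is not of this form. In particular, \cref{propone:local relative gives local} asks for the base to be a homotopically constant \emph{simplicial space}, a condition unrelated to diagonal contractibility of $B$; so that proposition does not apply here. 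Your sketch therefore never actually uses condition~\textbf{(D)} in a way that connects to the hypotheses available.

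The paper's argument exploits condition~\textbf{(D)} in a much more direct way. Rather than working over $\VEmb(B)$, it reformulates $S$-locality of $p$ as a right lifting property against the pushout-products $\VEmb(f) \,\square\, (\partial\Delta[n]\hookrightarrow\Delta[n])$. The codomain of such a map is $\VEmb(B)\times\Delta[n]$, and the crucial observation is that any map $\VEmb(B)\times\Delta[n]\to X=\LEmb(X)$ factors through the projection to $\Delta[n]$: the target is constant in the $k$-direction while $\VEmb(B)$ is constant in the $n$-direction, so condition~\textbf{(D)} forces the $\VEmb(B)$-part to collapse. One then pulls back along the resulting $\delta:\Delta[n]\to X$, which lands squarely in condition~(1) of \cref{lemma:localized left local}, and the lift exists because $\delta^*p$ is $S$-localized by hypothesis. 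This factoring trick is the step your proposal is missing; once you have it, the ``fiberwise control'' you were aiming for is automatic and no Grothendieck machinery is needed.
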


\begin{proof}
	If $p$ is an $S$-localized Reedy left fibration, then it satisfies Condition $(1)$ of \cref{lemma:localized left local}, as fibrations are closed under pullback. On the other side, assume $S$ satisfies condition {\bf (D)} and assume $p$ is a biReedy fibration. We will prove that Condition $(2)$ of \cref{lemma:localized left local} implies $p$ is an $S$-localized Reedy left fibration.

	By \cref{lemma:local Reedy covar}, Condition (2) implies that $p$ is a Reedy left fibration, so we only need to show it is local with respect to $S$. 
	It suffices to prove that $p$ satisfies the right lifting property with respect to the cofibration $j$ defined as the pushout product 
	$$j=(\VEmb(f): \VEmb(A) \to \VEmb(B)) \square (\partial \Delta[n] \to \Delta[n]),$$ 
	where $f$ is in $S$. The codomain of $j$ is $\VEmb(B) \times \Delta[n]$ and every map $\sigma:\VEmb(B) \times \Delta[n] \to X$ factors through a map $\delta:\Delta[n] \to X$ (as $S$ satisfies condition {\bf (D)}. Hence we get following diagram 
	\begin{center}
		\begin{tikzcd}[row sep=0.3in, column sep=0.3in]
			\ds\VEmb(A) \times \Delta[n] \coprod_{\VEmb(A) \times \partial \Delta[n]} \VEmb(B) \times \partial \Delta[n] \arrow[r] \arrow[d, "j"] & \delta^*L \arrow[r] \arrow[d, "\delta^*p"] & L \arrow[d, "p"] \\
			\VEmb(B) \times \Delta[n] \arrow[r] \arrow[rr, bend right=20, "\sigma"] \arrow[ur, dashed] & \Delta[n] \arrow[r, "\delta"] & X
		\end{tikzcd}.	
	\end{center} 
	By assumption $\delta^*L \to \Delta[n]$ is an $S$-localized Reedy left fibration and so has a lift, which implies that our original lifting problem has a solution proving that $p$ is an $S$-localized Reedy left fibration.
\end{proof}

Combining \cref{the:localized left local} with \cref{lemma:localized left global} immediately gives us following result.

\begin{corone} \label{cor:localized left}
	Let $S$ satisfy condition {\bf (C)}. Then all conditions in \cref{lemma:localized left global} and \cref{lemma:localized left local} coincide.
\end{corone}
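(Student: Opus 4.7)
The plan is to observe that condition \textbf{(C)} is by definition the conjunction of \textbf{(S)} and \textbf{(D)}, so the corollary reduces to assembling the two preceding results. Throughout, we may assume $p : L \to X$ is a biReedy fibration, since this is a standing hypothesis of both \cref{lemma:localized left local} and \cref{lemma:localized left global}.

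First, I would invoke condition \textbf{(S)}. By \cref{lemma:localized left global}, this implies that $p$ being an $S$-localized Reedy left fibration is equivalent to $p$ being a Reedy left fibration together with $\Val(L)$ (equivalently every $\Val_k(L)$) being local with respect to $S$. So the three conditions of \cref{lemma:localized left global} are all equivalent to the property ``$p$ is an $S$-localized Reedy left fibration.''

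Next I would invoke condition \textbf{(D)} to reach \cref{lemma:localized left local}. The forward direction of \cref{the:localized left local} always tells us that if $p$ is an $S$-localized Reedy left fibration then the four conditions of \cref{lemma:localized left local} hold, and the reverse implication holds precisely when $S$ satisfies \textbf{(D)}. Hence, under condition \textbf{(C)}, the four conditions of \cref{lemma:localized left local} are also all equivalent to ``$p$ is an $S$-localized Reedy left fibration.''

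Chaining these two equivalences through the common anchor ``$p$ is an $S$-localized Reedy left fibration'' then shows that all of the conditions from \cref{lemma:localized left local} and \cref{lemma:localized left global} are mutually equivalent, which is the statement of the corollary. There is no real obstacle here, as the work has already been done in the two preceding results; the only thing to verify is that the hypothesis \textbf{(C)} supplies both \textbf{(S)} and \textbf{(D)} simultaneously, which is immediate from \cref{not:conditions on f}.
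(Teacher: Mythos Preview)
Your proposal is correct and follows essentially the same approach as the paper: the paper simply states that the result follows by combining \cref{the:localized left local} with \cref{lemma:localized left global}, and you have spelled out exactly that combination, noting that condition \textbf{(C)} supplies both \textbf{(S)} (for the latter) and \textbf{(D)} (for the former).
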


We move on to characterize weak equivalences in the $S$-localized Reedy covariant model structure.
First, observe that we have a very immediate result for weak equivalences between fibrant objects.

 \begin{theone} \label{theone:local equiv between fibrants}
	Let $L$ and $M$ be two $S$-localized Reedy left fibrations over $X$.
	Let $g:L \to M$ be a map over $X$. Then the following are equivalent.
	\begin{enumerate}
		\item $g: L \to M$ is a biReedy equivalence.
		\item $\Val(g): \Val(L) \to \Val(M)$ is a Reedy equivalence.
		\item For every $\{ x \}: F(0) \to X$, the map  $\Fib_x \Val(L) \to \Fib_x \Val(M)$ is a Reedy equivalence
		of bisimplicial spaces.
		\item For every $\{x \}: F(0) \to X$, the map $\Fib_x(L) \to \Fib_x (M)$ is a diagonal Reedy equivalence
		of bisimplicial spaces.
	\end{enumerate}
\end{theone}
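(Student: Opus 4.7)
The plan is to reduce the statement to \cref{The Equiv of Reedy left fib}, the analogous characterization for maps between (unlocalized) Reedy left fibrations. Since every $S$-localized Reedy left fibration is in particular a Reedy left fibration, that theorem applies directly to $g : L \to M$ over $X$ and immediately yields $(1) \Leftrightarrow (2) \Leftrightarrow (4)$. The $S$-localization hypothesis plays no role here; these three conditions do not see it.

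For the remaining equivalence $(3) \Leftrightarrow (4)$, the plan is to apply \cref{The Equiv of Reedy left fib} a second time, now to the pulled-back map $\Fib_x(g) : \Fib_x L \to \Fib_x M$ over $F(0)$, for each point $x : F(0) \to X$. Two observations make this possible. First, by \cref{lemma:local Reedy covar}, the pullback of a Reedy left fibration along any map out of $F(n)$ is again a Reedy left fibration, so in particular $\Fib_x L \to F(0)$ and $\Fib_x M \to F(0)$ are Reedy left fibrations and the hypothesis of \cref{The Equiv of Reedy left fib} is met. Second, since $\Val$ is right adjoint to $\VEmb$, it commutes with pullbacks, yielding the identification $\Val(\Fib_x L) = \Fib_x \Val(L)$, and similarly for $M$.

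With these in hand, \cref{The Equiv of Reedy left fib} applied to $\Fib_x(g)$ over the terminal base $F(0)$ says (the quantifier ``for every point of $F(0)$'' being trivial): $\Fib_x(g)$ is a biReedy equivalence iff $\Fib_x \Val(g)$ is a Reedy equivalence iff $\Fib_x(g)$ is a diagonal Reedy equivalence. Quantifying over all $x : F(0) \to X$, the middle condition is exactly $(3)$ and the last is exactly $(4)$, closing the cycle. There is no substantive technical obstacle here; the only bookkeeping step is verifying that $\Val$ commutes with pullback, which is immediate from the adjunction $(\VEmb, \Val)$, and the $S$-localization condition never enters the reasoning.
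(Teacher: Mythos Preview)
Your proposal is correct and matches the paper's intent: the paper states this theorem without proof, calling it ``a very immediate result for weak equivalences between fibrant objects,'' clearly meaning it to follow from \cref{The Equiv of Reedy left fib} exactly as you describe. Your extra step deriving $(3)\Leftrightarrow(4)$ by applying \cref{The Equiv of Reedy left fib} fiberwise over $F(0)$, together with the observation that $\Val$ commutes with pullbacks as a right adjoint, is the natural way to supply the missing condition and is entirely in the spirit of the paper's (omitted) argument.
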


Before moving to the general case, we prove a {\it recognition principle for $S$-localized Reedy covariant equivalences
between Reedy left fibrations}.

For the next proposition we need following construction.
Let $p: L \to X$ be a Reedy left fibration. Then we have following diagram
\begin{equation} \label{eq:fib rep}
	\begin{tikzcd}[row sep=0.2in, column sep=0.3in]
		L_\bullet \arrow[drr, "p"'] \arrow[rr, hookrightarrow, "i", "\simeq"'] & & 
		\tilde{L}_\bullet \arrow[d, "\tilde{p}" twoheadrightarrow] \arrow[rr, "j", "\simeq"'] 
		& & \hat{L} \arrow[dll, twoheadrightarrow, "\hat{p}"] \\
		& & X & &  	
	\end{tikzcd}.		 
\end{equation}
Here the first map is the level-wise functorial factorization of the simplicial object in $(\ss_{/X})^{Ree_S}$ in the $S$-localized Reedy model structure. Moreover, let $\hat{p}: \hat{L} \to X$ be the biReedy fibrant replacement over $X$.

\begin{propone} \label{prop:localized equiv between Reedy left}
	Let $S$ satisfy condition {\bf (C)}.
	Let $p: L \to X$, $q: M \to X$ be Reedy left fibrations (not necessarily localized) and
	let $f:L \to M$ be a map over $X$. 
	Then the following are equivalent:
	\begin{enumerate}
		\item $f$ is an $S$-localized Reedy covariant equivalence.
		\item The map $\hat{f}: \hat{L} \to \hat{M}$ constructed in \ref{eq:fib rep} is a biReedy equivalence.
		\item The map 
		$$\Val(\hat{f}): \Val(\hat{L}) \to \Val(\hat{M})$$ 
		constructed in \ref{eq:fib rep} is a Reedy equivalence.
		\item The map
		$$\Val(f): \Val(L) \to \Val(M)$$
		is an $S$-localized Reedy equivalence.
		\item For every object $\{x\}: F(0) \to X$, the induced map on fibers 
		$$\Val(\Fib_x L) \to \Val(\Fib_x M)$$
		is an $S$-localized Reedy equivalence.
		\item For every object $\{x\}: F(0) \to X$, the induced map on fibers 
		$$\Fib_x L \to \Fib_x M$$
		is a diagonal $S$-localized Reedy equivalence.
	\end{enumerate}
\end{propone}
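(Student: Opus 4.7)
The plan is to pass all six conditions through the fibrant replacement $\hat{L}, \hat{M}$ of \eqref{eq:fib rep} and reduce them to equivalences of fibrant objects. The first step, and main preliminary, is to verify that \eqref{eq:fib rep} genuinely provides a fibrant replacement in $(\sss_{/X})^{ReeCov_S}$: specifically that $L \to \hat L$ is an $S$-localized Reedy covariant equivalence and $\hat L \to X$ is an $S$-localized Reedy left fibration. Each $\hat L_k$ is $S$-local (inherited from the level-wise $S$-localized Reedy fibrant replacement $\tilde L_k$, then preserved by the biReedy equivalence $\tilde L \to \hat L$), and $\hat L \to X$ remains a Reedy left fibration because the homotopy-pullback squares in \cref{Def Reedy left fibration} are stable under the level-wise Kan equivalence $\tilde L \to \hat L$ and under the level-wise $S$-localized replacement $L_k \to \tilde L_k$. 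Combined with \cref{cor:localized left}, available because $S$ satisfies condition (C), this yields the desired fibrancy.

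From this, (1) $\Leftrightarrow$ (2) follows immediately: two-out-of-three gives that $f$ is an $S$-localized Reedy covariant equivalence iff $\hat f$ is, and between fibrant objects of a left Bousfield localization the localized and unlocalized equivalences coincide, so $\hat f$ is an $S$-localized Reedy covariant equivalence iff it is a biReedy equivalence. Then (2) $\Leftrightarrow$ (3) is just \cref{theone:local equiv between fibrants} applied to $\hat L$ and $\hat M$.

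For (3) $\Leftrightarrow$ (4), I apply $\Val$ to \eqref{eq:fib rep}: this yields $S$-localized Reedy equivalences $\Val(L) \to \Val(\hat L)$ and $\Val(M) \to \Val(\hat M)$ (the level-wise replacement is by construction in $\ss^{Ree_S}$, and the biReedy trivial cofibration becomes a Reedy equivalence after $\Val$). Two-out-of-three reduces the question for $\Val(f)$ to the question for $\Val(\hat f)$, and because $\Val(\hat L), \Val(\hat M)$ are $S$-local, the $S$-localized and plain Reedy equivalences agree between them. The equivalence (5) $\Leftrightarrow$ (6) is the Quillen equivalence of \cref{The Diagonal localized Reedy model structure} applied fiber-by-fiber, which identifies $S$-localized Reedy equivalences of simplicial spaces with diagonal $S$-localized Reedy equivalences of bisimplicial spaces.

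The main obstacle is (4) $\Leftrightarrow$ (5), the fiber-wise detection statement. I would mimic the proof of \cref{The Recognition principle for Reedy covariant equivalences}: pull back along the right fibration $R_x \to X$ using the $S$-localized analogue of \cref{The Pullback preserves Reedy cov equiv} (obtained via \cref{the:localization of Quillen equiv}), then use the invariance of left fibrations under equivalences of the base (\leftref{leftitem:smooth maps}) to compare the pullback $Y \times_X R_x$ with the honest fibers $\Fib_x Y$. This is the only step requiring one to step outside level-wise arguments and use the Reedy left fibration structure in an essential way, and is where careful bookkeeping of the set $S$ throughout the localization is needed.
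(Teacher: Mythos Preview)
Your argument for $(1)\Leftrightarrow(2)\Leftrightarrow(3)\Leftrightarrow(4)$ and for $(5)\Leftrightarrow(6)$ is essentially the paper's, including the preliminary verification that $\hat L\to X$ is a fibrant replacement in $(\sss_{/X})^{ReeCov_S}$.

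The divergence, and the gap, is in how you reach $(5)$. You frame $(4)\Leftrightarrow(5)$ as the hard step and propose to attack it with the recognition-principle machinery: pull back along $R_x$ via an $S$-localized analogue of \cref{The Pullback preserves Reedy cov equiv}, obtained from \cref{the:localization of Quillen equiv}. This has two problems. First, \cref{the:localization of Quillen equiv} only gives a Quillen adjunction between $(\sss_{/X})^{ReeCov_S}$ and the localization of $(\sss_{/X})^{ReeCov}$ at $p_!p^*$ of the localizing maps; for that to coincide with $(\sss_{/X})^{ReeCov_S}$ you must know that $p_!p^*(\VEmb(A)\to\VEmb(B))$ is already an $S$-localized Reedy covariant equivalence, and that is precisely where condition {\bf (P)} enters (see the proof of \cref{the:pulling back along loc Ree right preserves loc Ree cov}). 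The present proposition only assumes {\bf (C)}. Second, the endpoint of the argument you sketch is essentially \cref{the:recognition principle for localized Reedy right equivalences}, which in the paper is deduced \emph{from} this proposition, so invoking it here is circular.

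The paper avoids all of this by proving $(3)\Leftrightarrow(5)$ instead, which is elementary. Since $\hat L,\hat M\to X$ are Reedy left fibrations, each $\Val(\hat L)_k\to X_0$ is a Kan fibration over the constant base $X_0$, so the Reedy equivalence in $(3)$ is detected fiber-wise: $\Fib_x\Val(\hat L)\to\Fib_x\Val(\hat M)$ is a Reedy equivalence for every $x$. One then observes that $\Fib_x\Val(L)\to\Fib_x\Val(\hat L)$ is itself an $S$-localized Reedy fibrant replacement (the target is $S$-local by \cref{lemma:localized left local}, and the construction \eqref{eq:fib rep} is compatible with taking fibers over $X_0$), so this fiber-wise Reedy equivalence is the same as the fiber-wise $S$-localized Reedy equivalence in $(5)$. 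No $R_x$, no condition {\bf (P)}, no circularity.
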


\begin{proof} 
	{\it (1) $\Leftrightarrow$ (2)}
	It suffices to prove that the map
    $\hat{p}: \hat{L} \to X$ from \ref{eq:fib rep} is a fibrant replacement of $L \to X$ in the $S$-localized 
    Reedy covariant model structure. 
    
    For that we need to prove two statements:
    \begin{itemize}
    	\item $\hat{p}: \hat{L} \to X$ is an $S$-localized Reedy left fibration: Indeed it is biReedy fibrant by definition.
    	Moreover, 
    	$\Val(\hat{L})$ is Reedy equivalent to $\Val(\tilde{L})$, which is by definition fibrant in the $S$-localized Reedy model structure, 
    	and so is itself fibrant in the $S$-localized Reedy model structure.
    	Finally, $\hat{L} \to X$ is Reedy left fibration, as it is biReedy equivalent to $\tilde{L} \to X$ and 
    	for every $n \geq 0$ we have the commutative diagram 
    	\begin{center}
    		\begin{tikzcd}[row sep=0.3in, column sep=0.6in]
    			\Val_n(L) \arrow[r, "\Val_n(i)", "\simeq"'] \arrow[d, "\simeq"] & \Val_n(\hat{L}) \arrow[d] \\
    			\Val(L) \times_{X_0} X_n \arrow[r, "\Val(i) \times_{X_0} X_n", "\simeq"'] & \Val(\hat{L}) \times_{X_0} X_n 
    		\end{tikzcd}.
    	\end{center}
    	\item $j \circ i$ is an $S$-localized Reedy covariant equivalence. Indeed $i$ is a level-wise $S$-localized Reedy equivalence and so an equivalence in the $S$-localized Reedy covariant model structure and $j$ is a biReedy equivalence. 
    \end{itemize}
   
   Now that we have established that $\hat{L}$ is the $S$-localized Reedy covariant fibrant replacement of $L$ over $X$, it follows by 
   definition of Bousfield localizations that $f$ is an $S$-localized Reedy covariant equivalence if and only if 
   $\hat{f}$ is a biReedy equivalence. 
   
   {\it (2) $\Leftrightarrow$ (3)} 
   $\hat{L}$ and $\hat{M}$ are Reedy left fibrations and so, by \cref{The Equiv of Reedy left fib}, a map $\hat{f}: \hat{L} \to \hat{M}$ is a biReedy equivalence 
   if and only if $\Val(\hat{f}): \Val(\hat{L}) \to \Val(\hat{M})$ is a Reedy equivalence. 
   
   {\it (3) $\Leftrightarrow$ (4)}
   We have a commutative diagram 
   \begin{center}
   	\begin{tikzcd}[row sep=0.2in, column sep=0.2in]
   		\Val(L) \arrow[r] \arrow[d, "\Val(f)"'] & \Val(\hat{L}) \arrow[d, "\Val(\hat{f})"] \\
   		\Val(M) \arrow[r] & \Val(\hat{M})
   	\end{tikzcd}.
   \end{center}   
   By construction, the horizontal maps are fibrant replacements in the $S$-localized Reedy model structure. 
   Hence, $\Val(f)$ is an $S$-localized Reedy weak equivalence if an only if $\Val(\hat{f})$ is a Reedy equivalence.
   
   {\it (3) $\Leftrightarrow$ (5)}
   First, observe that $\Val(\hat{L}) \to \Val(\hat{M})$ is a Reedy equivalence if and only if for every $\{x\}: \Delta[0] \to X_0$, the 
   induced map 
   $$\Fib_x(\Val(\hat{L})) \to \Fib_x(\Val(\hat{M}))$$
   is a Reedy equivalence. 
   
   Now, for a given point $\{x\}: \Delta[0] \to X_0$. The induced map on fibers 
   $$\Fib_x(\Val(L)) \to \Fib_x(\Val(\hat{L}))$$ 
   is still the fibrant replacement in the $S$-localized Reedy model structure. 
   Hence this is equivalent to 
   $$\Fib_x(\Val(L)) \to \Fib_x(\Val(M))$$
   being a $S$-localized Reedy equivalence.
   
   {\it (5) $\Leftrightarrow$ (6)}
   This follows from the fact that $L \to X$ is a Reedy left fibration and so 
   $\VEmb\Val\Fib_x(L) \to \Fib_x(L)$ is a biReedy equivalence. 
\end{proof}

 \begin{theone} \label{the:recognition principle for localized Reedy right equivalences}
	A map $g:Y \to Z$ of bisimplicial spaces over $X$ is an equivalence 
	in the localized Reedy covariant model structure if and only if for each map $\{x\}: F(0) \to X$, the induced map 
	$$ Y \underset{X}{\times} \LFib(R_x) \to Z \underset{X}{\times} \LFib(R_x)$$
	is an equivalence in the diagonal localized Reedy model structure.
	Here $R_x$ is a choice of right fibrant replacement of the map $\{x\}$.
\end{theone}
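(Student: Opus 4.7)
The strategy is to reduce to \cref{prop:localized equiv between Reedy left}, which characterizes $S$-localized Reedy covariant equivalences between Reedy left fibrations in terms of their fibers. To that end, I would first replace $g: Y \to Z$ by a map of Reedy left fibrations over $X$, and then show that the pullback with $\LFib(R_x)$ computes the fiber at $x$ up to diagonal Reedy equivalence.

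Take biReedy fibrant replacements $Y \hookrightarrow \tilde{Y}$ and $Z \hookrightarrow \tilde{Z}$ in the (unlocalized) Reedy covariant model structure over $X$, giving Reedy left fibrations $\tilde{Y}, \tilde{Z} \to X$ together with a lift $\tilde{g}: \tilde{Y} \to \tilde{Z}$. These replacements are Reedy covariant equivalences, hence also $S$-localized Reedy covariant equivalences, so $g$ is an $S$-localized Reedy covariant equivalence if and only if $\tilde{g}$ is. Moreover, since $\LFib(R_x) \to X$ comes from a Reedy right fibration, the pullback functor $- \times_X \LFib(R_x)$ is right Quillen with respect to the biReedy structure (by the dual of \cref{The Pullback preserves Reedy cov equiv}) and hence preserves biReedy equivalences between biReedy fibrant objects, as well as diagonal $S$-localized Reedy equivalences between such. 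This reduces both directions of the theorem to the case where $g = \tilde{g}$ is a map of Reedy left fibrations.

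By \cref{prop:localized equiv between Reedy left}, $\tilde{g}$ is an $S$-localized Reedy covariant equivalence if and only if for every $\{x\}: F(0) \to X$ the fiber map $\Fib_x \tilde{Y} \to \Fib_x \tilde{Z}$ is a diagonal $S$-localized Reedy equivalence. It therefore suffices to produce a natural diagonal Reedy equivalence $\Fib_x \tilde{Y} \simeq \tilde{Y} \times_X \LFib(R_x)$ whenever $\tilde{Y} \to X$ is a Reedy left fibration, and similarly for $\tilde{Z}$, since the diagonal $S$-localized Reedy model structure is a localization of the diagonal Reedy model structure. The map $F(0) \to R_x$ is a CSS equivalence over $X$ by construction (as $R_x$ is a contravariant fibrant replacement of $\{x\}$), so I would apply the smoothness property \leftref{leftitem:smooth maps} level-wise, using that each row $\Val_k(\tilde{Y}) \to X$ is a left fibration by definition of a Reedy left fibration, to conclude that $\tilde{Y} \times_X F(0) \to \tilde{Y} \times_X \LFib(R_x)$ is a biReedy equivalence.

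The main obstacle is this last step: lifting \leftref{leftitem:smooth maps} from the simplicial setting to the bisimplicial one, so that pullback of a CSS equivalence along a Reedy left fibration of bisimplicial spaces yields a biReedy (and therefore diagonal Reedy) equivalence. I expect this to be a level-wise check that combines the row-wise left fibration structure supplied by \cref{Def Reedy left fibration} with the already-established smoothness of individual left fibrations, but some care is needed to confirm that the row-wise equivalences assemble into a biReedy equivalence rather than just a level-wise one.
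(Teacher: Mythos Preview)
Your overall architecture matches the paper's: replace $g$ by a map $\tilde g$ of Reedy left fibrations, then compare the fiber at $x$ with the pullback along $\LFib(R_x)$ and invoke \cref{prop:localized equiv between Reedy left}. However, both of your justifying steps contain genuine errors.

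\textbf{The reduction step.} You argue that $-\times_X \LFib(R_x)$ is right Quillen for the \emph{biReedy} model structure and therefore preserves biReedy equivalences between biReedy fibrant objects. But the map $Y \to \tilde Y$ is not a biReedy equivalence: it is a Reedy covariant trivial cofibration, and $Y$ is an arbitrary bisimplicial space, not assumed biReedy fibrant. The correct argument, which is what the paper does, is that \cref{The Pullback preserves Reedy cov equiv} says $p_!p^*$ is \emph{left} Quillen for the Reedy covariant model structure when $p$ is a Reedy right fibration; hence pullback along $\LFib(R_x)$ takes the Reedy covariant equivalence $Y \to \tilde Y$ to a Reedy covariant equivalence, and \cref{The DiagRee local ReeCov} then shows every Reedy covariant equivalence is a diagonal Reedy (hence diagonal $S$-localized Reedy) equivalence.

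\textbf{The fiber comparison.} Your claim that $F(0) \to R_x$ is a CSS equivalence is false. The contravariant fibrant replacement $R_x$ is a model for the over-object $X_{/x}$; for instance when $X=F(1)$ and $x=1$ one has $R_x \simeq F(1)$, which is certainly not CSS-equivalent to a point. So \leftref{leftitem:smooth maps} does not apply. What is true is that $F(0) \to R_x$ is a \emph{contravariant} equivalence over $X$; pulling it back along the Reedy left fibration $\tilde Y \to X$ yields a Reedy contravariant equivalence $\tilde Y \times_X F(0) \to \tilde Y \times_X \LFib(R_x)$ by the dual of \cref{The Pullback preserves Reedy cov equiv}, and this is again a diagonal Reedy equivalence by (the dual of) \cref{The DiagRee local ReeCov}. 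This is exactly the argument in the paper's zig-zag.

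In short, the missing idea in both places is the same: the interaction of Reedy left and Reedy right fibrations via \cref{The Pullback preserves Reedy cov equiv}, together with the fact (\cref{The DiagRee local ReeCov}) that Reedy (co)variant equivalences are diagonal Reedy equivalences. Neither biReedy right-Quillenness nor CSS-smoothness is the right tool here.
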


\begin{proof}
	Let $\hat{g}: \hat{Y} \to \hat{Z}$ be a fibrant replacement of $g$ in the Reedy covariant model structure (note: not localized). 
	Moreover, let $\{x \}: F(0) \to X$ be a vertex in $X$.
	This gives us following zig-zag of maps:
	\begin{center}
		\begin{tikzcd}[row sep=0.25in, column sep=0.25in]
			\hat{Y} \underset{X}{\times} F(0) \arrow[d, "ReeContra \simeq"'] \arrow[r] & \hat{Z} \underset{X}{\times} F(0) \arrow[d, "ReeContra \simeq"] \\
			\hat{Y} \underset{X}{\times} R_x \arrow[r] &\hat{Z} \underset{X}{\times} R_x \\
			Y \underset{X}{\times} R_x \arrow[r] \arrow[u, "ReeCov \simeq"] & Z \underset{X}{\times} R_x \arrow[u, "ReeCov \simeq"']
		\end{tikzcd}
		.
	\end{center}
	According to \cref{The Pullback preserves Reedy cov equiv} the top vertical maps are Reedy contravariant equivalences 
	and the bottom vertical maps are Reedy 
	covariant equivalences. By \cref{The DiagRee local ReeCov} both of these are diagonal Reedy equivalences, which are always
	diagonal localized Reedy equivalences (\cref{The Diagonal localized Reedy model structure}). Thus the top map is a diagonal localized Reedy equivalence if and only if the bottom map is one, but by \cref{prop:localized equiv between Reedy left}	this is equivalent to $Y \to Z$ being a localized Reedy contravariant equivalence over $X$.
\end{proof}

  \begin{theone} \label{the:loc Ree contra invariant under CSS}
 	Let $g: X \to Y$ be a map of simplicial spaces. Then the adjunction
 	\begin{center}
 		\adjun{(\sss_{/X})^{ReeCov_S}}{(\sss_{/Y})^{ReeCov_S}}{g_!}{g^*}
 	\end{center}
 	is a Quillen adjunction, which is a Quillen equivalence whenever $g$ is a CSS equivalence.
 	Here both sides have the $S$-localized Reedy covariant model structure.
 \end{theone}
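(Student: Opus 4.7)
The strategy is to apply \cref{the:localization of Quillen equiv} to the unlocalized Quillen adjunction/equivalence provided by \cref{The Base change Reedy left}. Set $S' = \{\VEmb(A) \to \VEmb(B) \to X : (A \to B) \in S\}$ in $\sss_{/X}$; this is the set of cofibrations whose Bousfield localization of $(\sss_{/X})^{ReeCov}$ produces $(\sss_{/X})^{ReeCov_S}$ (as in the proof of \cref{The Localized Reedy contravariant model structure}). Applying \cref{the:localization of Quillen equiv} yields a Quillen adjunction, and a Quillen equivalence when $g$ is a CSS equivalence, between $(\sss_{/X})^{ReeCov_S}$ and the Bousfield localization of $(\sss_{/Y})^{ReeCov}$ at $g_!(S')$. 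The remaining task is to transfer this conclusion along the further Bousfield localization from $g_!(S')$ to the (potentially larger) set $\mathcal{L}_Y := \{\VEmb(A) \to \VEmb(B) \to Y : (A \to B) \in S\}$.

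For the Quillen adjunction statement, since $(\sss_{/Y})^{ReeCov_S}$ is the Bousfield localization of $(\sss_{/Y})^{ReeCov}_{g_!(S')}$ at $\mathcal{L}_Y \setminus g_!(S')$, it suffices by \cite[Corollary A.3.7.2]{lurie2009htt} to verify that $g^*$ preserves $S$-localized Reedy left fibrations. Given an $S$-localized Reedy left fibration $L \to Y$, the pullback $g^*L \to X$ is a Reedy left fibration by \cref{The Base change Reedy left}, and the enriched adjunction gives
\[
\Map_{/X}(\VEmb(B), g^*L) \xrightarrow{\ \cong \ } \Map_{/Y}(\VEmb(B), L),
\]
where $\VEmb(B) \to Y$ is the composite through $g$; the induced map from the $S$-map $A \to B$ is then a Kan equivalence by $\mathcal{L}_Y$-locality of $L$, showing that $g^*L$ is $S$-local and thus fibrant in $(\sss_{/X})^{ReeCov_S}$.

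For the Quillen equivalence, assume $g$ is a CSS equivalence. Since every object is cofibrant in both localized model structures, the derived counit $g_!g^*L \to L$ for $L$ fibrant coincides with the ordinary counit, which is already a Reedy covariant equivalence by the unlocalized Quillen equivalence in \cref{The Base change Reedy left}, and hence an $S$-localized Reedy covariant equivalence. For the derived unit, I would factor $X' \to g^* R(g_!X')$ through the unlocalized Reedy covariant fibrant replacement as $X' \to g^*R_0(g_!X') \to g^* R(g_!X')$: the first map is the unlocalized derived unit, hence already an equivalence, and the second is a map of Reedy left fibrations over $X$ obtained by applying $g^*$ to the localization map $R_0(g_!X') \to R(g_!X')$, which is itself an $S$-localized Reedy covariant equivalence between Reedy left fibrations over $Y$. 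Because $g^*$ commutes with fibers (the fiber of $g^*(-)$ at $x \in X_0$ is the fiber of $(-)$ at $g(x) \in Y_0$), one then reduces via \cref{the:recognition principle for localized Reedy right equivalences} to the already established fiberwise diagonal $S$-localized Reedy equivalences.

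The main obstacle will be the last step: showing that the fiberwise recognition principles on $\sss_{/X}$ and $\sss_{/Y}$ are compatible under $g^*$. Concretely, the contravariant fibrant replacement $R_x$ of $\{x\}: F(0) \to X$ in $\ss_{/X}$ must be compared with $R_{g(x)}$ in $\ss_{/Y}$, and this comparison — together with the identification $g^*(M \times_Y \LFib(R_{g(x)})) \simeq g^*M \times_X \LFib(R_x)$ up to the relevant equivalence — is where the CSS-equivalence hypothesis on $g$ enters essentially, via \leftref{leftitem:smooth maps} and \leftref{leftitem:CSS invariant} applied levelwise.
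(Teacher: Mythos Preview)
Your overall strategy---apply \cref{the:localization of Quillen equiv} to the unlocalized Quillen equivalence of \cref{The Base change Reedy left} and then bridge the gap between $g_!(S')$ and $\mathcal{L}_Y$---matches the paper's, and your Quillen adjunction argument is essentially identical to the paper's one-line observation that fibrations are stable under pullback.

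For the Quillen equivalence, however, the paper takes a shorter path than your direct verification of the derived unit. Rather than tracking the unit through an unlocalized fibrant replacement and invoking the fiberwise recognition principle, the paper simply shows that the two candidate localizations of $(\sss_{/Y})^{ReeCov}$---the one at $g_!(S')$ produced by \cref{the:localization of Quillen equiv}, and the intrinsic one at $\mathcal{L}_Y$---have the \emph{same fibrant objects} and hence coincide as model structures, so that \cref{the:localization of Quillen equiv} applies on the nose. Since fibrancy of a Reedy left fibration $L \to Y$ in the $g_!(S')$-localization is, by adjunction, exactly $S$-locality of $g^*L$ over $X$, the comparison reduces to: $L$ is $S$-local over $Y$ if and only if $g^*L$ is $S$-local over $X$. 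One direction is your Quillen-adjunction computation; for the other the paper observes that $g^*L \to L$ is a level-wise CSS equivalence by \leftref{leftitem:smooth maps}, hence a level-wise covariant equivalence by \leftref{leftitem:CSS fib}, along which $S$-locality transfers.

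Your route can likely be completed, but as written it carries a cost: the fiberwise step you flag as ``the main obstacle'' rests on \cref{the:recognition principle for localized Reedy right equivalences}, whose proof goes through \cref{prop:localized equiv between Reedy left} and hence condition {\bf (C)} on $S$. The theorem as stated imposes no hypothesis on $S$, so your argument would establish only a special case. The paper's comparison of fibrant objects sidesteps this entirely, using only \leftref{leftitem:smooth maps} and \leftref{leftitem:CSS fib}, which are independent of $S$.
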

 
 \begin{proof}
 	Clearly it is a Quillen adjunction as fibrations are stable under pullback.
 	
 	Let us now assume that $g$ is a CSS equivalence. We want to prove that $(g_!,g^*)$ is a Quillen equivalence of $S$-localized Reedy covariant model structures. By \cref{The Base change Reedy left} it is a Quillen equivalence of Reedy covariant model structures and we want to use \cref{the:localization of Quillen equiv} to finish the proof. Unfortunately we cannot apply it directly as we have not characterized the $S$-localized Reedy covariant model structure on $\sss_{/Y}$ via $g_!$. Hence, we will prove that in this case they coincide. 
 	
 	We need to prove the following fact: Let $p:L \to Y$ be a Reedy left fibration. Then $p$ is $S$-localized if and only and only if $g^*p: g^*L \to X$ is an $S$-localized Reedy left fibration. By \leftref{leftitem:smooth maps}, $g^*L \to L$ is a level-wise CSS equivalence, which means it is a level-wise covariant equivalence (\leftref{leftitem:CSS fib}). 
 	Hence, if $g^*p$ is $S$-localized then $p$ is $S$-localized as well.
\end{proof}

 \begin{theone} \label{the:pulling back along loc Ree right preserves loc Ree cov}
 	Let $S$ satisfy conditions {\bf (P)} and {\bf (C)}.
	Let $p:R \to X$ be a Reedy right fibration over $X$. The induced adjunction
	\begin{center}
		\adjun{(\sss_{/X})^{ReeCov_S}}{(\sss_{/X})^{ReeCov_S}}{p_!p^*}{p_*p^*}
	\end{center}
	is a simplicial Quillen adjunction. Here both sides have the $S$-localized Reedy covariant model structure.
\end{theone}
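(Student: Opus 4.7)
The plan is to descend the Quillen adjunction $(p_!p^*, p_*p^*)$ of \cref{The Pullback preserves Reedy cov equiv} from the unlocalized Reedy covariant model structures to their $S$-localizations via \cref{the:localization of Quillen equiv}. Cofibrations in $(\sss_{/X})^{ReeCov_S}$ coincide with those of $(\sss_{/X})^{ReeCov}$, so $p_!p^*$ continues to preserve cofibrations. By the standard criterion for Bousfield localizations it therefore suffices to show that $p_!p^*$ sends every localizing cofibration $\VEmb(A) \to \VEmb(B) \to X$ (for $f: A \to B$ in $S$ and a chosen map $\VEmb(B) \to X$) to an $S$-localized Reedy covariant equivalence over $X$; unwinding, the map in question is
\[
\VEmb(A) \times_X R \longrightarrow \VEmb(B) \times_X R.
\]

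To check this, apply the recognition principle \cref{the:recognition principle for localized Reedy right equivalences}: it suffices to verify, for every vertex $\{x\}: F(0) \to X$, that
\[
\VEmb(A) \times_X R \times_X \LFib(R_x) \longrightarrow \VEmb(B) \times_X R \times_X \LFib(R_x)
\]
is a diagonal $S$-localized Reedy equivalence. Applying $\fDiag$, which preserves fiber products and satisfies $\fDiag \circ \VEmb = \mathrm{id}_{\ss}$ by a direct computation on the indexing categories (since $p_2 \circ \phiDiag = \mathrm{id}$), reduces the problem to showing that
\[
A \times_{\fDiag X} P_x \longrightarrow B \times_{\fDiag X} P_x
\]
is an $S$-localized Reedy equivalence in $\ss$, where $P_x$ denotes $\fDiag(R \times_X \LFib(R_x))$.

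For this final step, fix an $S$-local Reedy fibrant simplicial space $W$. Using Cartesian closure in $\ss$ and the adjunction between base change and internal hom, the induced restriction on $\Map_{\ss}(-, W)$ translates into a restriction $\Map_{\ss}(B, V) \to \Map_{\ss}(A, V)$ for an appropriate exponential-type simplicial space $V$ built from $W$ and $P_x$. Condition $(P)$ guarantees that such an internal hom of an $S$-local target remains $S$-local, so the restriction is a Kan equivalence because $f$ lies in $S$. Condition $(C)$ enters to handle the base $\fDiag X$: since $A$ and $B$ are diagonally contractible, the sliced computation reduces to an absolute one to which $(P)$ applies directly.

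The main obstacle is the careful handling of the base $\fDiag X$ when invoking condition $(P)$: the stated $(P)$ concerns absolute exponentials $W^Y$ in $\ss$, whereas the natural internal hom appearing here lives in the slice over $\fDiag X$. One approach is to reduce to the absolute setting via condition $(C)$; alternatively, one can instead use the dual criterion that the right adjoint $p_*p^*$ preserves fibrant objects, identifying $\Fib_x(p_*p^*L) \cong (p_x)_*(p_x)^*\Fib_x L$ by Beck--Chevalley and then exhibiting $\Val(\Fib_x(p_*p^*L))$ as an exponential of $\Val(\Fib_x L)$ directly amenable to $(P)$.
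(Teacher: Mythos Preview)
Your reduction coincides with the paper's: both arguments come down to showing that for each $f:A\to B$ in $S$ and each $\VEmb(B)\to X$, the map $p_!p^*\VEmb(f):\VEmb(A)\times_X R\to \VEmb(B)\times_X R$ is an $S$-localized Reedy covariant equivalence. From there, however, you take a detour that the paper avoids. The paper uses condition {\bf (C)} \emph{immediately}: since $B$ (and $A$) are diagonally contractible, any map $\VEmb(B)\to X$ is biReedy equivalent to one factoring through a point $F(0)\to X$, so the fiber product reduces to an honest product $\VEmb(B)\times (F(0)\times_X R)$. The map in question then becomes $\VEmb(f)\times \id$, and a one-line adjunction argument together with {\bf (P)} finishes it. No recognition principle, no diagonals of the base, no slice exponentials are needed.

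By contrast, you first invoke \cref{the:recognition principle for localized Reedy right equivalences} and then try to descend through $\fDiag$. This buys you nothing: after applying $\fDiag$ you still face a fiber product over $\fDiag(\LEmb X)$, which is the constant simplicial space on $\mathrm{diag}(X)$ and not $X$ itself, and you are forced to invoke {\bf (C)} anyway to collapse the base, exactly the step the paper does at the outset. Your closing paragraph acknowledges this gap and sketches two fixes; the first of these (``reduce to the absolute setting via {\bf (C)}'') is literally the paper's argument, just delayed. The Beck--Chevalley alternative you propose is plausible but also unfinished: condition {\bf (P)} is formulated for exponentials in $\ss$, while $(p_x)_*(p_x)^*\Fib_xL=(\Fib_xL)^{R_x}$ is an exponential in $\sss$ by an arbitrary bisimplicial space $R_x$, so you would still need an extra argument to pass from one to the other. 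Finally, note that \cref{the:localization of Quillen equiv} does not apply verbatim, since it localizes the target by $F(S)$ rather than by $S$; the correct tool (which the paper uses) is simply \cite[Corollary A.3.7.2]{lurie2009htt}: cofibrations are unchanged, so it suffices that the right adjoint preserves fibrant objects.
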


\begin{proof}
	Clearly the left adjoint preserves cofibrations and so by \cite[Corollary A.3.7.2]{lurie2009htt} it suffices to show that the right adjoint preserves fibrant objects. So, let $L \to X$ be a localized Reedy left fibration over $X$. Then we have to show that $p_*p^*L \to X$ is also a localized 
	Reedy left fibration over $X$. By \cref{The Pullback preserves Reedy cov equiv}, 
	we already know that it is a Reedy left fibration, so all that is left is to show that 
	it is local with respect to $S$. 
	By \cref{def:localized reedy left}, it suffices to show that for any map 
	$q:\VEmb(B) \to X$ the induced map 
	$$ \Map_{/X}(\VEmb(B), p_*p^*L) \to \Map_{/X}(\VEmb(A), p_*p^*L)$$
	is a Kan equivalence. Using the adjunction, this is equivalent to
	$$ \Map_{/X}(p_!p^*\VEmb(B), L) \to \Map_{/X}(p_!p^*\VEmb(A),L)$$
	being a Kan equivalence. For that it suffices to show that 
	$$p_!p^*\VEmb(A) \to p_!p^*\VEmb(B)$$
	is a localized Reedy covariant equivalence over $X$. 
	
	As $S$ satisfies condition {\bf (C)}, $\fDiag(B)$ is contractible and so the map $\VEmb(B) \to X$ 
	is Reedy equivalent to a map of the form $\VEmb(B) \to F(0) \to X$. Thus 
	$$p^*(\VEmb(B)) = \VEmb(B) \times_X R \simeq \VEmb(B) \times (F(0) \times_X R)$$
	similarly $p^*(\VEmb(A)) \simeq \VEmb(A) \times (F(0) \times_X R)$.
	However, 
	$$\VEmb(A) \times (F(0) \underset{X}{\times} R) \to  \VEmb(B) \times (F(0) \underset{X}{\times} R)$$
	is a localized Reedy covariant equivalence over $X$. 
	Indeed, this immediately follows from the fact that $S$ satisfies condition {\bf (P)}.
\end{proof}

Using condition {\bf (P)} we can recover other interesting results about $S$-localized Reedy left fibrations.

\begin{propone}
	Let $S$ be a set of cofibrations that satisfy condition {\bf (P)}.
	Let $g: C \to D$ be a cofibration of bisimplicial spaces and $p: L \to X$ a $S$-localized Reedy left fibration. 
	Then $\exp{g}{p}$ is also a localized Reedy left fibration.
\end{propone}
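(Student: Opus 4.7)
The proof splits into two parts: showing that $\exp{g}{p}$ is a Reedy left fibration, and showing it is local with respect to $S$.

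The first part follows directly from the Joyal--Tierney calculus: by \cref{Lemma Exp of Reedy left fib is Reedy left fib} the Reedy covariant model structure is compatible with Cartesian closure, so the pullback-exponential of a cofibration against a fibrant object (a Reedy left fibration) is itself a Reedy left fibration. In particular $\exp{g}{p}$ is a Reedy left fibration.

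For the second part, I apply the enriched adjunction $(-)\times g \dashv \exp{g}{-}$ to the mapping-space criterion defining locality, which translates the claim into the following: for every $f: A \to B$ in $S$, the map $p: L \to X$ is local with respect to the bisimplicial pushout-product $\VEmb(f) \square g$. Equivalently, it suffices to prove that $\VEmb(f) \square g$ is a trivial cofibration in the $S$-localized Reedy covariant model structure on $\sss_{/X}$, so that $p$, being fibrant there, automatically has the required homotopy lifting property against it.

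From here the plan is the standard saturation argument: the class of cofibrations $g$ for which $\VEmb(f) \square g$ is an $S$-localized Reedy covariant trivial cofibration is closed under pushouts, transfinite composition, and retracts, so it suffices to verify the claim on a set of generating cofibrations of $\sss$. These decompose as pushout-products of boundary inclusions in three directions corresponding to $\VEmb$, $\LEmb$, and the simplicial enrichment. For the $\VEmb$-direction one has $\VEmb(f) \square \VEmb(i) = \VEmb(f \square i)$, and $f \square i$ is an $S$-local equivalence in $\ss$ by condition \textbf{(P)} (which gives that $f \times X$ is an $S$-local equivalence for every simplicial space $X$) combined with left properness of $\ss^{Ree_S}$; the functor $\VEmb$ then transports this equivalence to $\sss$ since $S$-locality in $\sss$ is detected by $\Val$ and $\Val \circ \VEmb = \mathrm{id}$. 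The simplicial direction is automatic because the localized model structure is simplicial.

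The main technical obstacle is the $\LEmb$-direction, which condition \textbf{(P)} does not address directly. The key observation is that $S$-locality in $\sss$ is detected by the $\Val$ functor, and the exponential $W^{\LEmb(Y)}$ can be reexpressed, via the interaction of $\LEmb$ with the enrichment, as an iterated simplicial exponential to which condition \textbf{(P)} applies. Making this rewriting precise is the delicate step; once done, it feeds back into the saturation argument to finish the proof.
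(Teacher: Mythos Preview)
Your overall structure is sound: split into showing $\exp{g}{p}$ is a Reedy left fibration, then showing locality with respect to $S$. The first part is correct, and your reduction of the second part to proving that $\VEmb(f)\square g$ is a trivial cofibration in $(\sss_{/X})^{ReeCov_S}$ is also valid.

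However, you are missing the one-line trick that makes the paper's proof short. Instead of doing a saturation argument on $g$ and case-splitting on generating cofibrations in three directions, the paper uses the Joyal--Tierney swap identity
\[
\exp{\VEmb(f)}{\exp{g}{p}} \;\cong\; \exp{\VEmb(f)\square g}{p} \;\cong\; \exp{g}{\exp{\VEmb(f)}{p}}.
\]
This completely removes $g$ from the problem: once one knows that $\exp{\VEmb(f)}{p}$ is a trivial biReedy fibration (which the paper attributes directly to condition \textbf{(P)}), compatibility of the \emph{biReedy} model structure with Cartesian closure gives that $\exp{g}{(-)}$ applied to it is again a trivial biReedy fibration for any cofibration $g$, and the proof is finished.

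Your saturation argument, by contrast, requires handling the $\VEmb$-, $\LEmb$-, and simplicial directions separately, and you correctly identify the $\LEmb$-direction as the nontrivial one---but then leave it as ``the delicate step'' with only a heuristic sketch. That is a genuine gap in the write-up. In fact the content you are missing there is essentially the same as the paper's single claim that $\exp{\VEmb(f)}{p}$ is a trivial biReedy fibration; the difference is that the swap identity lets the paper invoke that input once and be done, whereas your organization forces you to thread it through a cell-by-cell induction. So you have located the right hard input but not the reorganization that turns it into a two-line proof.
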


\begin{proof}
	By \leftref{leftitem:exponent} $\exp{g}{p}$ is a Reedy left fibration and so it suffices to prove that it is local with respect to $S$.
	It suffices to observe that 
	$\exp{f}{\exp{g}{p}}$ 
	is a trivial biReedy fibration for every $f$ in $S$. By direct computation we have 
	$$ \exp{f}{\exp{g}{p}} \cong \exp{f \square g}{p} \cong \exp{g}{\exp{f}{p}}.$$
	The result now follows from the fact that $\exp{f}{p}$ is a trivial biReedy fibration (as $f$ satisfies {\bf (P)}) 
	and the biReedy model structure is compatible with Cartesian closure (\cref{subsec:biReedy}).
\end{proof}

\begin{corone} \label{cor:Exp of loc Ree right is loc Ree right}
	Let $S$ be a set of cofibrations that satisfy condition {\bf (P)}.
	Let $L \to X$ be an $S$-localized Reedy left fibration. Then for any bisimplicial space $Y$, $L^Y \to X^Y$ is also an 
	$S$-localized Reedy left fibration.
\end{corone}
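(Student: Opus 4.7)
The plan is to deduce this corollary as an immediate special case of the preceding proposition, applied to the canonical cofibration $g : \emptyset \to Y$. First I would observe that for any bisimplicial space $Y$, the unique map $\emptyset \to Y$ from the initial object is a monomorphism, hence a cofibration in the biReedy model structure.

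Next I would unpack the pullback-exponential $\exp{g}{p}$, which by definition fits into the diagram expressing $L^Y \to L^\emptyset \times_{X^\emptyset} X^Y$. Since $\emptyset$ is the initial object of $\sss$, both $L^\emptyset$ and $X^\emptyset$ are terminal (they are the one-point bisimplicial space), and so the pullback $L^\emptyset \times_{X^\emptyset} X^Y$ collapses to $X^Y$. Thus $\exp{g}{p}$ is naturally identified with the desired map $L^Y \to X^Y$.

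Applying the preceding proposition to $g : \emptyset \to Y$ and $p : L \to X$ (which is an $S$-localized Reedy left fibration by hypothesis, with $S$ satisfying condition \textbf{(P)}) then gives that $L^Y \to X^Y$ is an $S$-localized Reedy left fibration, as required. There is essentially no obstacle here; the content is genuinely carried by the proposition, and the corollary is simply the special case that isolates the exponential functor from the full pushout-product/pullback-exponential formalism.
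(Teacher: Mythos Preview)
Your proposal is correct and is exactly the intended deduction: the paper states this as a corollary of the preceding proposition without further proof, and specializing that proposition to the cofibration $g:\emptyset \to Y$ (so that $\exp{g}{p}$ reduces to $L^Y \to X^Y$) is precisely how it follows.
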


\section{(Segal) Cartesian Fibrations} \label{sec:cartesian Fibrations}
 In the previous we section defined and studied fibrations with fiber localizations of Reedy fibrant simplicial spaces. 
 In this section we want to apply these results to three very important cases: Segal spaces, complete Segal spaces and homotopically constant simplicial spaces.
 Similar to the previous sections $X$ is a fixed simplicial space considered a bisimplicial space (\cref{Rem Setting the base}).
 Also recall the notion of being local as described in \cref{not:local}.
 
  \begin{defone} \label{def:segcartfib}
   We say a Reedy left (right) fibration $Y \to X$ over $X$ is a {\it Segal coCartesian fibration}
   ({\it Segal Cartesian fibration}) 
   if it is local with respect to the set of maps 
   $$S = \{ G(n) \to F(n): n \geq 2\}.$$
  \end{defone}

 \begin{defone} \label{def:cart fib}
   We say a Reedy left (right) fibration $Y \to X$ is a {\it coCartesian fibration} ({\it Cartesian fibration}) 
   if it is local with respect to the set of maps 
   $$S = \{ G(n) \to F(n): n \geq 2\} \cup \{ F(0) \to E(1) \}.$$
  \end{defone}
  
  \begin{defone} \label{def:rfib bi spaces}
   We say a Reedy left (right) fibration $Y \to X$ is a {\it left fibration} ({\it right fibration}) if it is 
   local with respect to the set of maps 
   $$S = \{ F(0) \to F(n): n \geq 0\}.$$
 \end{defone}

 By \cref{ex:maps satisfy conditions} all maps in the set 
 $$\{ G(n) \to F(n) : n \geq 2\} \cup \{ F(0) \to E(1) \} \cup \{ F(0) \to F(n) : n \geq 0 \}$$
 satisfy conditions {\bf (C)} and {\bf (P)} and so all results in \cref{Sec Localizations of Reedy Right Fibrations} hold for their corresponding fibrations. In order to summarize the results about the various localizations using the following table.
 
 \begin{center}
  \begin{tabular}{l|l|l|l|l}
   Variance $(\R)$ & Value $(\V)$& Fibration $(\F)$ & Model Structure $(\M)$ & Denoted $(\D)$ \\ \hline 
   Reedy left & $Seg$ & Segal coCartesian & Segal coCartesian &  $SegcoCart$ \\ \hline 
   Reedy right & $Seg$ & Segal Cartesian & Segal Cartesian & $SegCart$ \\ \hline 
   Reedy left & $CSS$ & coCartesian & coCartesian & $coCart$ \\ \hline 
   Reedy right & $CSS$ & Cartesian &  Cartesian & $Cart$ \\ \hline 
   Reedy left & $Kan$ & left & covariant & $cov$ \\ \hline 
   Reedy right & $Kan$ & right & contravariant & $contra$
  \end{tabular}

 \end{center}

 We now have following results using the table above.
 
  \begin{theone} \label{the:segal Cartesian model structure}
  	(\cref{The Localized Reedy contravariant model structure}) 
  There is a unique simplicial combinatorial left proper model structure on bisimplicial spaces over $X$, called the 
  $(\M)$-model structure and denoted by $(\sss_{/X})^{(\D)}$ satisfying following conditions.
  \begin{enumerate}
   \item The fibrant objects are the $(\F)$-fibrations over $X$.
   \item Cofibrations are monomorphisms.
   \item A map of bisimplicial spaces $Y \to Z$ over $X$ is a weak equivalence if 
   $$\Map_{/X}(Z,W) \to \Map_{/X}(Y,W)$$ 
   is a Kan equivalence for every $(\F)$-fibration $W \to X$.
   \item A weak equivalence ($(\F)$-fibration) between fibrant objects is a level-wise equivalence (biReedy fibration). 
  \end{enumerate}
 \end{theone}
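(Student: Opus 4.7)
The plan is to recognize this theorem as essentially a direct application of the framework set up in \cref{Sec Localizations of Reedy Right Fibrations}. Specifically, \cref{The Localized Reedy contravariant model structure} constructs, for any set $S$ of monomorphisms of simplicial spaces, an $S$-localized Reedy covariant model structure $(\sss_{/X})^{ReeCov_S}$ whose fibrant objects are exactly the $S$-localized Reedy left fibrations. By \cref{rem:reedy contravariant model structure}, the analogous construction works on the Reedy contravariant side. So the proof amounts to feeding in the correct $S$ for each of the six rows of the table.

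First I would fix the dictionary: for the three left-variant rows, take $S$ to be $\{G(n) \to F(n) : n \geq 2\}$, $\{G(n) \to F(n) : n \geq 2\} \cup \{F(0) \to E(1)\}$, and $\{F(0) \to F(n) : n \geq 0\}$ respectively, and apply \cref{The Localized Reedy contravariant model structure}; for the three right-variant rows, take the same sets but apply the contravariant dual from \cref{rem:reedy contravariant model structure}. This instantly gives items (1), (2), (3) of the statement: the fibrant objects are by construction the Reedy left (right) fibrations that are local with respect to $S$, which, by \cref{def:segcartfib}, \cref{def:cart fib}, and \cref{def:rfib bi spaces}, are precisely the $(\F)$-fibrations. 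The cofibration characterization is monomorphisms, and the weak equivalence characterization via mapping into fibrant objects is the standard output of a left Bousfield localization.

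The only remaining content is item (4), the level-wise characterization of weak equivalences and fibrations between fibrant objects. For fibrations, this is a general property of Bousfield localization of cofibrantly generated model structures: a map between $S$-local fibrant objects is an $S$-local fibration if and only if it is a fibration in the unlocalized model structure, which in this case (by the construction of the Reedy covariant structure in \cref{The Reedy Covariant Model Structure}) means a biReedy fibration. For weak equivalences between fibrant objects, I would invoke \cref{theone:local equiv between fibrants}, which gives exactly the equivalence of biReedy equivalences with level-wise equivalence between two $S$-localized Reedy left fibrations; the contravariant statement follows symmetrically.

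There is no real obstacle: all the hard work has been done in constructing the localized model structures and proving the recognition principles in \cref{Sec Localizations of Reedy Right Fibrations}. The mildly subtle point, and the one I would be careful to spell out, is verifying that the sets $S$ chosen above do indeed satisfy the conditions \textbf{(C)} and \textbf{(P)} of \cref{not:conditions on f}, so that the recognition results (in particular \cref{theone:local equiv between fibrants}) apply in the form we need. But this is already recorded in \cref{ex:maps satisfy conditions}, so the proof can close by observing that the table is simply a consolidated index for six parallel instances of the same construction.
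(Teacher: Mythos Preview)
Your proposal is correct and matches the paper's approach: the paper gives no separate proof at all, simply recording the theorem with a parenthetical cross-reference to \cref{The Localized Reedy contravariant model structure}, so your elaboration is already more than what the paper provides. One small over-engineering: item (4) is a general feature of left Bousfield localization (weak equivalences and fibrations between local fibrant objects agree with those of the unlocalized structure, here the Reedy covariant one from \cref{The Reedy Covariant Model Structure}(4)) and does not itself require conditions {\bf (C)} or {\bf (P)} or \cref{theone:local equiv between fibrants}; those hypotheses are what make the \emph{later} results of \cref{sec:cartesian Fibrations} go through.
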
 
 
  \begin{propone} 
  	(\cref{propone:local relative gives local})
 	The following adjunction
 	\begin{center}
 		\adjun{(\sss_{/X})^{(\D)}}{(\sss_{/X})^{Diag-(\V)}}{id}{id}
 	\end{center}
 	is a Quillen adjunction, which is a Quillen equivalence whenever $X$ is homotopically constant. 
 	Here the left hand side has the $(\M)$ model structure and the left hand side 
 	has the induced diagonal $(\V)$-model structure over the base $X$.
 \end{propone}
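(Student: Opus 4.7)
The plan is to observe that this proposition is, for each row of the preceding table, a direct instance of \cref{propone:local relative gives local}, in its evident dual form (cf.\ \cref{rem:reedy contravariant model structure}) for the rows whose variance $(\R)$ is Reedy right. For each row I would fix the set $S$ of monomorphisms of simplicial spaces that cuts out the value condition $(\V)$: namely $S = \{G(n) \to F(n) : n \geq 2\}$ when $(\V) = Seg$, $S = \{G(n) \to F(n) : n \geq 2\} \cup \{F(0) \to E(1)\}$ when $(\V) = CSS$, and $S = \{F(0) \to F(n) : n \geq 0\}$ when $(\V) = Kan$.

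With these choices the first task is to identify $(\sss_{/X})^{(\D)}$ with the $S$-localized Reedy (co)variant model structure $(\sss_{/X})^{ReeCov_S}$ of \cref{The Localized Reedy contravariant model structure}. This follows from \cref{def:segcartfib}, \cref{def:cart fib} and \cref{def:rfib bi spaces} combined with the uniqueness statement in \cref{the:segal Cartesian model structure}: both model structures are left proper combinatorial simplicial with monomorphisms as cofibrations, and in each case the $(\F)$-fibrant objects are by construction the $S$-localized Reedy (co)variant fibrations, so the two structures must agree.

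The second task is to identify the diagonal $(\V)$-model structure on $\sss_{/X}$ with the slice of the diagonal $S$-localized Reedy model structure $\sss^{DiagRee_S}$ of \cref{The Diagonal localized Reedy model structure}. This is immediate from the definition: a bisimplicial space is fibrant in the diagonal $(\V)$-structure precisely when its diagonal is Reedy fibrant and $\V$-local in $\ss$, which by the choice of $S$ is exactly the same as being Reedy fibrant and $S$-local, i.e.\ fibrant in $\sss^{DiagRee_S}$.

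Once these two identifications are in place, the statement coincides verbatim with \cref{propone:local relative gives local}, which supplies both the Quillen adjunction and the Quillen equivalence under the homotopically constant hypothesis on $X$. I do not anticipate any genuine obstacle; the only book-keeping point is a uniform treatment of the Reedy right versus Reedy left entries, and this is handled by invoking the contravariant analogue of \cref{propone:local relative gives local} as indicated in \cref{rem:reedy contravariant model structure}.
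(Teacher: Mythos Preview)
Your proposal is correct and matches the paper's approach: the proposition is stated in the paper without proof, merely as an instance of \cref{propone:local relative gives local} for the specific sets $S$ listed in \cref{def:segcartfib}, \cref{def:cart fib}, \cref{def:rfib bi spaces}, with the contravariant rows handled by the evident duality of \cref{rem:reedy contravariant model structure}. Your identifications of $(\sss_{/X})^{(\D)}$ with $(\sss_{/X})^{ReeCov_S}$ and of the diagonal $(\V)$-model structure with $\sss^{DiagRee_S}$ are exactly the bookkeeping the paper leaves implicit.
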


\begin{theone} (\cref{the:simplicial groth construction localized})
	Let $\C$ be a small category. Then the adjunctions defined in \cref{the:simplicial groth construction}
	\begin{center}
		\begin{tikzcd}[row sep=0.5in, column sep=0.9in]
			\Fun(\C,\ss^{(\V)})^{proj} \arrow[r, shift left = 1.8, "\ssint_\C"] & 
			(\sss_{/N\C})^{(\D)} \arrow[l, shift left=1.8, "\ssH_\C", "\bot"'] \arrow[r, shift left=1.8, "\ssbT_\C"] &
			\Fun(\C,\ss^{(\V)})^{proj} \arrow[l, shift left=1.8, "\ssbI_\C", "\bot"']
		\end{tikzcd}
	\end{center}
	are simplicial Quillen equivalences. Here the middle has the $(\M)$-model structure
	and the two sides have the projective model structure on the $(\V)$-model structure. 
\end{theone}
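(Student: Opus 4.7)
The plan is to upgrade the unlocalized Grothendieck equivalences of \cref{the:simplicial groth construction} to the localized setting using the explicit description of fibrant objects and weak equivalences in $(\sss_{/N\C})^{(\D)}$ supplied by the previous section. Let $S$ denote the set of monomorphisms of simplicial spaces for the row of the table under consideration, so that $\ss^{(\V)} = \ss^{Ree_S}$ and $(\sss_{/N\C})^{(\D)} = (\sss_{/N\C})^{ReeCov_S}$; these identifications are built into \cref{The Segal Space Model Structure}, \cref{The Complete Segal Space Model Structure}, and the definitions \cref{def:segcartfib}, \cref{def:cart fib}, \cref{def:rfib bi spaces}. The argument then unfolds in three steps: first promote the adjunctions to Quillen adjunctions in the localized setting, then reduce the Quillen-equivalence claim to showing that $(\ssint_\C, \ssH_\C)$ alone is a Quillen equivalence, and finally verify that the derived unit is a weak equivalence.

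For the first step, both left adjoints $\ssint_\C$ and $\ssbT_\C$ preserve cofibrations in the unlocalized setting and cofibrations are unchanged under Bousfield localization, so by \cite[Corollary A.3.7.2]{lurie2009htt} it suffices to show that their right adjoints $\ssH_\C$ and $\ssbI_\C$ carry $S$-local fibrant objects to $S$-local fibrant objects. By \cref{the:simplicial groth construction} they preserve unlocalized fibrant objects, so the only new content is preservation of locality. By adjunction this reduces to verifying that for every $f : A \to B$ in $S$ and every $c \in \C$ the maps
\[ \ssint_\C(\Hom_\C(c,-) \times A) \to \ssint_\C(\Hom_\C(c,-) \times B) \]
are $S$-localized Reedy covariant equivalences over $N\C$, and the map
\[ \ssbT_\C(\VEmb(A)) \to \ssbT_\C(\VEmb(B)) \]
is an $S$-localized projective equivalence. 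Both are direct computations using the explicit formulas for the Grothendieck functors together with the fiberwise characterization of localized Reedy covariant equivalences in \cref{the:recognition principle for localized Reedy right equivalences}.

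For the second step, the composite $\ssbT_\C \circ \ssint_\C$ is naturally isomorphic to the identity functor on $\Fun(\C,\ss)$, so once both pairs are Quillen adjunctions the composite is automatically a Quillen equivalence. It therefore suffices to establish that $(\ssint_\C, \ssH_\C)$ is itself a Quillen equivalence in the localized setting. The derived counit of this adjunction is already a weak equivalence in the unlocalized Reedy covariant model structure by \cref{the:simplicial groth construction}, hence a fortiori in the localization. For the derived unit, take a fibrant $F : \C \to \ss^{(\V)}$; by \cref{the:simplicial groth construction} the natural map $\ssint_\C F \to \ssbI_\C F$ is a biReedy equivalence and therefore a localized Reedy covariant equivalence, while $\ssbI_\C F$ is itself $S$-localized Reedy covariant fibrant by the first step. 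Thus $\ssbI_\C F$ serves as the localized fibrant replacement of $\ssint_\C F$, and the derived unit map unwinds to $F \to \ssH_\C \ssbI_\C F$, which is naturally isomorphic to the identity and hence a projective equivalence.

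The main obstacle is the routine but non-trivial verification in the first step that the images of the localizing generators under $\ssint_\C$ and $\ssbT_\C$ remain equivalences after localization. For the specific rows of the table this amounts to checking stability of the generators $G(n) \hookrightarrow F(n)$, $F(0) \hookrightarrow E(1)$, and $F(0) \hookrightarrow F(n)$ under the Grothendieck construction; this reduces to the observation that these functors commute with products by representables and with the embedding $\VEmb$, so they preserve local-equivalence status fiberwise, whence globally by \cref{the:recognition principle for localized Reedy right equivalences}. The three Reedy-right rows of the table follow by the formal duality outlined in \cref{rem:reedy contravariant model structure}, applying the same argument to the contravariant Grothendieck construction under the involution $\ordered{0} \leftrightarrow \ordered{n}$.
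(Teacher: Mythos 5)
Your proposal is correct and follows essentially the same route as the paper's proof of \cref{the:simplicial groth construction localized}: establish the Quillen adjunctions by reducing, via adjunction, to checking that the localizing generators are sent to localized equivalences; then reduce the equivalence claim to the pair $(\ssint_\C,\ssH_\C)$ using that $\ssbT_\C\circ\ssint_\C$ is naturally the identity; and finally identify $\ssbI_\C F$ as the localized fibrant replacement of $\ssint_\C F$ so the derived unit becomes $F\to\ssH_\C\ssbI_\C F$. The only difference is cosmetic: you spell out the generator check and the dual (Reedy right) rows of the table slightly more explicitly than the paper, which dismisses them as immediate computations and as formal duality respectively.
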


\begin{theone}
	(\cref{cor:localized left}) 
	Let $p: L \to X$ be a biReedy fibration.
	Then the following are equivalent:
	\begin{enumerate}
		\item $p$ is an $(\F)$-fibration.
		\item $p$ is an $(\R)$-fibration and the simplicial space $\Val(L)$ is local with respect to $(\V)$.
		\item $p$ is an $(\R)$-fibration and the simplicial spaces $\Val_k(L)$ are local with respect to $(\V)$ for all $k \geq 0$.
		\item For every map $\sigma: F(n) \times \Delta[l] \to X$, the pullback map $\sigma^*p: \sigma^*L \to F(n) \times \Delta[l]$
		is an $(\F)$-fibration.
		\item For every map $\sigma: F(n) \to X$, the pullback map $\sigma^*p: \sigma^*L \to F(n)$
		is an $(\F)$-fibration.
		\item $p$ is an $(\R)$-fibration and for every point $\{x\}: F(0) \to X$ the fiber $\Fib_xL$ is fibrant in the diagonal $(\V)$-model structure.
		\item $p$ is an $(\R)$-fibration and for every point $\{x\}: F(0) \to X$ the fiber $\Val(\Fib_xL)$ is fibrant in the $(\V)$-model structure.
	\end{enumerate}
\end{theone}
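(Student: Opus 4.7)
The plan is to observe that this theorem is a direct specialization of \cref{cor:localized left} to the three particular localizing sets appearing in Definitions \ref{def:segcartfib}, \ref{def:cart fib} and \ref{def:rfib bi spaces}. So the proof reduces to verifying the hypothesis of \cref{cor:localized left}, namely that each of the sets
\[
\{G(n) \to F(n) : n \geq 2\}, \quad \{G(n) \to F(n) : n \geq 2\} \cup \{F(0) \to E(1)\}, \quad \{F(0) \to F(n) : n \geq 0\}
\]
satisfies condition \textbf{(C)}.

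First I would point to the verification already done in \cref{ex:maps satisfy conditions}: the simplicial spaces $F(n)$, $G(n)$ and $E(1)$ are each diagonally contractible (the first two are explicitly in (1)–(2) of that example; $E(1)$ is covered in (5)). Since a monomorphism $A \hookrightarrow B$ satisfies condition \textbf{(C)} if and only if both $A$ and $B$ are diagonally contractible (this is the reformulation given in \cref{not:conditions on f}), all three sets above consist entirely of maps satisfying \textbf{(C)}. In particular the set $S$ used to define each of the fibration notions $(\F)$ satisfies both conditions \textbf{(S)} and \textbf{(D)}.

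Once this observation is in hand, the seven conditions of the theorem line up with the conditions in \cref{cor:localized left} under the translation provided by the table: the fibrant replacement $(\V)$ corresponds to the $S$-local objects, $(\R)$-fibration corresponds to Reedy left (resp. right) fibration, and $(\F)$-fibration corresponds to $S$-localized Reedy left (resp. right) fibration. Specifically, conditions (2)–(3) are the content of \cref{lemma:localized left global} (which needs condition \textbf{(S)}), conditions (4)–(7) are the content of \cref{lemma:localized left local} (whose remaining implication to (1) needs condition \textbf{(D)}), and \cref{cor:localized left} collates these under condition \textbf{(C)}.

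The only thing one should be careful about is the Reedy right / contravariant case, which is not formally written out in \cref{Sec Localizations of Reedy Right Fibrations} but is the obvious dual (\cref{rem:reedy contravariant model structure}); I would simply invoke that duality rather than repeat arguments. There is no real obstacle, since the technical work has all been carried out in \cref{cor:localized left}; the theorem is a packaging statement, and the proof is essentially just a pointer to \cref{ex:maps satisfy conditions} to verify the hypothesis, followed by an appeal to \cref{cor:localized left}.
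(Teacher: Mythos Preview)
Your proposal is correct and matches the paper's approach exactly: the paper states this theorem with the parenthetical pointer \cref{cor:localized left} and, just before the table, verifies via \cref{ex:maps satisfy conditions} that all the relevant maps satisfy condition \textbf{(C)} (and \textbf{(P)}), so that the results of \cref{Sec Localizations of Reedy Right Fibrations} specialize directly. Your observation about the dual (Reedy right) case being handled by \cref{rem:reedy contravariant model structure} is also how the paper treats it.
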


 \begin{theone} (\cref{theone:local equiv between fibrants})
	Let $L$ and $M$ be two $(\F)$-fibrations over $X$.
	Let $g:L \to M$ be a map over $X$. Then the following are equivalent.
	\begin{enumerate}
		\item $g: L \to M$ is a biReedy equivalence.
		\item $\Val(g): \Val(L) \to \Val(M)$ is a Reedy equivalence.
		\item For every $\{ x \}: F(0) \to X$, the map  $\Fib_x \Val(L) \to \Fib_x \Val(M)$ is a Reedy equivalence of bisimplicial spaces.
		\item For every $\{x \}: F(0) \to X$, the map $\Fib_x(L) \to \Fib_x (M)$ is a diagonal Reedy equivalence of bisimplicial spaces.
	\end{enumerate}
\end{theone}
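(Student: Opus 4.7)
The plan is to deduce this statement directly from \cref{theone:local equiv between fibrants} via the table that identifies the six listed cases with instances of the $S$-localized Reedy covariant (or contravariant) model structure. By \cref{def:segcartfib}, \cref{def:cart fib}, and \cref{def:rfib bi spaces}, every $(\F)$-fibration is, by definition, an $S$-localized Reedy $(\R)$-fibration for a specific set $S$ of monomorphisms of simplicial spaces: namely $S = \{G(n) \to F(n) : n \geq 2\}$ for Segal (co)Cartesian fibrations, $S = \{G(n) \to F(n) : n \geq 2\} \cup \{F(0) \to E(1)\}$ for (co)Cartesian fibrations, and $S = \{F(0) \to F(n) : n \geq 0\}$ for left/right fibrations.

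With this identification fixed, I would apply \cref{theone:local equiv between fibrants} verbatim. Given any pair $L, M$ of $(\F)$-fibrations over $X$ and a map $g: L \to M$ over $X$, the four conditions listed here match the four conditions of that cited theorem on the nose: (1) biReedy equivalence of $g$, (2) Reedy equivalence of $\Val(g)$, (3) fiberwise Reedy equivalence after applying $\Val$, and (4) fiberwise diagonal Reedy equivalence. Since \cref{theone:local equiv between fibrants} holds for any set $S$ of monomorphisms (with no appeal to conditions \textbf{(C)} or \textbf{(P)}), the equivalence follows immediately in every row of the table.

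For the three Reedy right variants (Segal Cartesian, Cartesian, and right fibrations), the same argument goes through using the dual Reedy contravariant localized model structure as in \cref{rem:reedy contravariant model structure}; here $\Val$, $\Fib_x$, and the diagonal construction behave symmetrically, so the dualization is routine.

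The main ``obstacle'' is essentially bookkeeping, not a substantive argument: one need only observe that the six rows of the table all instantiate the general framework of \cref{Sec Localizations of Reedy Right Fibrations}. No further calculation or model-categorical input is required beyond what has already been established.
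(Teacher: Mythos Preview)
Your proposal is correct and matches the paper's approach exactly: the paper gives no separate argument here but simply cites \cref{theone:local equiv between fibrants} in parentheses, relying on the table to identify each $(\F)$-fibration as an $S$-localized Reedy left (or right) fibration for the appropriate $S$. Your remarks on the contravariant dualization and on the fact that no condition on $S$ is needed are accurate and consistent with how the paper treats these instantiations.
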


For the next proposition we need following construction.
Let $p: W \to X$ be an $(\R)$-fibration. Then we can construct following diagram
\begin{equation} \label{eq:fib rep cor}
	\begin{tikzcd}[row sep=0.2in, column sep=0.3in]
		W_\bullet \arrow[drr, "p"'] \arrow[rr, hookrightarrow, "i", "\simeq"'] & & 
		\tilde{W}_\bullet \arrow[d, "\tilde{p}" twoheadrightarrow] \arrow[rr, "j", "\simeq"'] 
		& & \hat{W} \arrow[dll, twoheadrightarrow, "\hat{p}"] \\
		& & X & &  	
	\end{tikzcd}.	 
\end{equation}
Here the first map is the level-wise functorial factorization of the simplicial object in $(\ss_{/X})^{(\V)}$, the $(\V)$-model structure. Moreover, let $\hat{p}: \hat{L} \to X$ be a biReedy fibrant replacement over $X$.

\begin{propone} (\cref{prop:localized equiv between Reedy left})
	Let $p: L \to X$, $q: M \to X$ be an $(\R)$-fibrations and let $f:L \to M$ be a map over $X$. Then the following are equivalent.
	\begin{enumerate}
		\item $f$ is an $(\M)$-equivalence.
		\item The map $\hat{f}: \hat{L} \to \hat{M}$ constructed in \ref{eq:fib rep cor} is a biReedy equivalence.
		\item The map 
		$$\Val(\hat{f}): \Val(\hat{L}) \to \Val(\hat{M})$$ 
		constructed in \ref{eq:fib rep cor} is a Reedy equivalence.
		\item The map
		$$\Val(f): \Val(L) \to \Val(M)$$
		is a $(\V)$-equivalence.
		\item For every object $\{x\}: F(0) \to X$, the induced map on fibers 
		$$\Val(\Fib_x L) \to \Val(\Fib_x M)$$
		is a $(\V)$-equivalence.
		\item For every object $\{x\}: F(0) \to X$, the induced map on fibers 
		$$\Fib_x L \to \Fib_x M$$
		is a diagonal $(\V)$-equivalence.
	\end{enumerate}
\end{propone}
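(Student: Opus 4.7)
The plan is to observe that the statement is essentially the specialization of \cref{prop:localized equiv between Reedy left} to the three localization setups catalogued in the preceding table, together with their Reedy right analogues. So the proof reduces to checking that the relevant sets of maps satisfy the hypotheses needed to invoke \cref{prop:localized equiv between Reedy left}, and to handling the contravariant case by duality.

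First I would verify the hypotheses. The set
$$S = \{ G(n) \to F(n) : n \geq 2\} \cup \{ F(0) \to E(1) \} \cup \{ F(0) \to F(n) : n \geq 0 \}$$
and every subset of it consisting of the maps relevant for the Segal coCartesian, coCartesian, and covariant model structures satisfies condition $(\mathbf{C})$: this is exactly the content of \cref{ex:maps satisfy conditions}. Thus, for each row in the table, the ambient set $S$ satisfies the hypothesis of \cref{prop:localized equiv between Reedy left}, and the six equivalent conditions of that proposition translate, via the correspondence $(\V) \leftrightarrow S$-local, $(\R) \leftrightarrow$ Reedy left, $(\M) \leftrightarrow S$-localized Reedy covariant, directly into the six conditions of the present statement.

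Next I would handle the Reedy right cases (Segal Cartesian, Cartesian, contravariant). These are obtained from the Reedy left cases by passing to the opposite simplicial direction, and \cref{rem:reedy contravariant model structure} guarantees that all the results established for the Reedy covariant side have direct analogues on the Reedy contravariant side. In particular, the analogue of \cref{prop:localized equiv between Reedy left} for localized Reedy right fibrations holds with identical proof (exchanging $\ordered{0}^*$ and $\ordered{n}^*$ throughout), and this is what yields the three missing rows of the table.

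There is no real obstacle in this argument beyond bookkeeping; the hard work is already packaged inside \cref{prop:localized equiv between Reedy left} and \cref{ex:maps satisfy conditions}. The construction of the diagram \ref{eq:fib rep cor} coincides with that of \ref{eq:fib rep}, applied level-wise in the $(\V)$-model structure (which in each of the three cases is a well-defined simplicial, combinatorial, left proper localization of the Reedy model structure on $\ss$), so the factorization $i$ followed by biReedy fibrant replacement $j$ is available in each case. With these pieces in place, the chain of equivalences $(1) \Leftrightarrow (2) \Leftrightarrow (3) \Leftrightarrow (4) \Leftrightarrow (5) \Leftrightarrow (6)$ is an immediate instance of the corresponding chain in \cref{prop:localized equiv between Reedy left}.
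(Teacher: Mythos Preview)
Your proposal is correct and matches the paper's approach exactly: the paper presents this proposition (and all the others in \cref{sec:cartesian Fibrations}) without separate proof, simply noting beforehand that by \cref{ex:maps satisfy conditions} the relevant sets of maps satisfy conditions {\bf (C)} and {\bf (P)}, so all results of \cref{Sec Localizations of Reedy Right Fibrations} specialize directly. Your explicit mention of the duality for the Reedy right rows via \cref{rem:reedy contravariant model structure} is likewise how the paper handles that side.
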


 \begin{theone} (\cref{the:recognition principle for localized Reedy right equivalences})
	A map $g:Y \to Z$ of bisimplicial spaces over $X$ is an $(\M)$-equivalence if and only if for each map $\{x\}: F(0) \to X$, the induced map 
	$$ Y \underset{X}{\times} \LFib(R_x) \to Z \underset{X}{\times} \LFib(R_x)$$
	is an equivalence in the diagonal $(\V)$-model structure. Here $R_x$ is a choice of right fibrant replacement of the map $\{x\}$.
\end{theone}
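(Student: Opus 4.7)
The plan is to recognize this as a direct instantiation of the general recognition principle \cref{the:recognition principle for localized Reedy right equivalences}, applied three times — once for each row of the fibration table corresponding to Segal coCartesian, coCartesian, and covariant (and dually their Reedy right versions).

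First I would verify that all the relevant sets of localizing monomorphisms satisfy the hypotheses of \cref{the:recognition principle for localized Reedy right equivalences}. Reading the general theorem, it does not actually require any of the conditions \textbf{(S)}, \textbf{(D)}, \textbf{(C)}, or \textbf{(P)} — its proof only used \cref{The Pullback preserves Reedy cov equiv} (which gives that pulling back along a Reedy right fibration preserves Reedy covariant equivalences), \cref{The DiagRee local ReeCov} (to translate diagonal Reedy equivalences into localized ones), and \cref{prop:localized equiv between Reedy left} for fibrations between Reedy left fibrations. The proposition \cref{prop:localized equiv between Reedy left} does require condition \textbf{(C)}, but as noted in the paragraph preceding \cref{the:segal Cartesian model structure}, the sets
\[
\{G(n) \to F(n) : n \geq 2\}, \quad \{G(n) \to F(n) : n \geq 2\} \cup \{F(0) \to E(1)\}, \quad \{F(0) \to F(n) : n \geq 0\}
\]
all satisfy conditions \textbf{(C)} and \textbf{(P)} by \cref{ex:maps satisfy conditions}, so the theorem applies in each case.

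Next I would unwind notation. The $(\M)$-model structure is by definition the $S$-localized Reedy covariant model structure for the appropriate $S$, and the diagonal $(\V)$-model structure is by definition the diagonal $S$-localized Reedy model structure (\cref{The Diagonal localized Reedy model structure}) for the same $S$, since the table identifies $(\V) \in \{Seg, CSS, Kan\}$ with the corresponding localization of Reedy model structure on simplicial spaces. Under this dictionary the statement is literally \cref{the:recognition principle for localized Reedy right equivalences}.

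Finally I would handle the dual (Reedy right) cases by symmetry, invoking \cref{rem:reedy contravariant model structure}: the entire development of \cref{Sec Localizations of Reedy Right Fibrations} dualizes verbatim to Reedy right fibrations and the $S$-localized Reedy contravariant model structure, with $\LFib$ replaced by its contravariant analogue and $R_x$ by a left fibrant replacement of $\{x\}$. I do not anticipate any genuine obstacle here since the work has all been done upstream; the only point requiring minor care is checking that the convention in the statement — which uses $\LFib(R_x)$ and a right fibrant replacement $R_x$ uniformly for all six rows — aligns properly with the covariant/contravariant side we are invoking, but this is a bookkeeping matter rather than a substantive argument.
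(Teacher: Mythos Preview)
Your proposal is correct and matches the paper's approach: the statement in \cref{sec:cartesian Fibrations} is presented precisely as an instantiation of \cref{the:recognition principle for localized Reedy right equivalences} (the reference appears in the theorem header itself), with no separate proof given beyond the blanket observation that the relevant sets of maps satisfy conditions {\bf (C)} and {\bf (P)}. Your additional care in tracing where condition {\bf (C)} actually enters (via \cref{prop:localized equiv between Reedy left}) and your remark about the bookkeeping for the dual rows are both appropriate and go slightly beyond what the paper makes explicit.
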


\begin{theone} (\cref{the:loc Ree contra invariant under CSS}) \label{the:cart invariant}
	Let $g: X \to Y$ be a map of simplicial spaces. Then the adjunction
	\begin{center}
		\adjun{(\sss_{/X})^{(\D)}}{(\sss_{/Y})^{(\D)}}{g_!}{g^*}
	\end{center}
	is a Quillen adjunction, which is a Quillen equivalence whenever $g$ is a CSS equivalence.
	Here both sides have the $(\M)$-model structure.
\end{theone}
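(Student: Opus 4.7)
The plan is to recognize this theorem as a direct specialization of \cref{the:loc Ree contra invariant under CSS} applied row-by-row to the table preceding the statement. First I would unpack the definitions: by \cref{the:segal Cartesian model structure}, each of the three $(\M)$-model structures on $\sss_{/X}$ is precisely the $S$-localized Reedy covariant (resp.\ contravariant) model structure for a specific set $S$ of monomorphisms in $\ss$. Reading these off from \cref{def:segcartfib}, \cref{def:cart fib} and \cref{def:rfib bi spaces}, the sets are
\[
S = \{G(n) \to F(n) : n \geq 2\}
\]
in the Segal (co)Cartesian case,
\[
S = \{G(n) \to F(n) : n \geq 2\} \cup \{F(0) \to E(1)\}
\]
in the (co)Cartesian case, and $S = \{F(0) \to F(n) : n \geq 0\}$ in the left/right fibration case.

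The crucial point is that in each of the three cases the set $S$ is defined intrinsically in $\ss$ and does not depend on the base $X$ or $Y$. Hence the hypotheses of \cref{the:loc Ree contra invariant under CSS} are satisfied, and applying that theorem to each of the three choices of $S$ immediately yields the Quillen adjunction $(g_!, g^*)$ for an arbitrary map $g: X \to Y$, together with the upgrade to a Quillen equivalence when $g$ is a CSS equivalence. No fresh verification is needed; the content of the present theorem is packaged entirely inside the generality of \cref{the:loc Ree contra invariant under CSS}.

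The only piece that would require a sentence of care is that \cref{the:loc Ree contra invariant under CSS} is formulated only for the Reedy covariant version, whereas three rows of the table concern Reedy right fibrations. I would dispose of this by appealing to the evident contravariant analogue, which holds by the literally dual argument (using the contravariant versions of \leftref{leftitem:smooth maps} and \leftref{leftitem:CSS fib}, as signaled by \cref{rem:reedy contravariant model structure}). I anticipate no genuine obstacle: the whole argument amounts to making the dictionary between the rows of the table and the sets $S$ explicit, and then invoking the already-proved theorem.
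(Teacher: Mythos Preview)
Your proposal is correct and matches the paper's approach exactly: the theorem in Section~5 is stated with the parenthetical reference to \cref{the:loc Ree contra invariant under CSS} as its only justification, and the paper's preamble to the table already records that the relevant sets $S$ satisfy the needed conditions, so the result is a direct specialization. Your extra remark about the contravariant rows via \cref{rem:reedy contravariant model structure} is appropriate and is precisely how the paper intends those cases to be handled.
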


\begin{theone} (\cref{the:pulling back along loc Ree right preserves loc Ree cov})
	Let $p:V \to X$ be a dual of an $(\R)$-fibration over $X$. The induced adjunction
	\begin{center}
		\adjun{(\sss_{/X})^{(\D)}}{(\sss_{/X})^{(\D)}}{p_!p^*}{p_*p^*}
	\end{center}
	is a Quillen adjunction. Here both sides have the $(\M)$-model structure.
\end{theone}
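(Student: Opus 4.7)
The plan is to reduce this directly to \cref{the:pulling back along loc Ree right preserves loc Ree cov}, since each of the six entries in the table corresponds to an $S$-localized Reedy (co)variant model structure for a specific choice of localizing set $S$. First I will note that the phrase ``dual of an $(\R)$-fibration'' is the appropriate mate in each row: for rows whose model structure is defined via Reedy left fibrations (Segal coCartesian, coCartesian, covariant) the hypothesis $p \colon V \to X$ should be a Reedy right fibration, and for the rows defined via Reedy right fibrations (Segal Cartesian, Cartesian, contravariant) it should be a Reedy left fibration. This matches the setup of \cref{the:pulling back along loc Ree right preserves loc Ree cov}, where a Reedy right fibration yields a Quillen adjunction on the Reedy covariant side; the dualization to the contravariant case is routine and was flagged in \cref{rem:reedy contravariant model structure}.

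Next, I would verify that the localizing set $S$ for each row satisfies conditions \textbf{(P)} and \textbf{(C)} of \cref{not:conditions on f}. This is precisely what is collected in \cref{ex:maps satisfy conditions}: the spine inclusions $G(n) \hookrightarrow F(n)$ satisfy both \textbf{(C)} and \textbf{(P)} (parts (2) and (3)), the completeness map $F(0) \to E(1)$ satisfies both \textbf{(C)} and \textbf{(P)} (part (5)), and the inclusions $F(0) \to F(n)$ satisfy \textbf{(C)} since $F(n)$ is diagonally contractible (part (1)); condition \textbf{(P)} for these last maps is built into the definition of the covariant model structure, as they are the defining localizations of the Reedy model structure on $\ss$ realizing $\ss^{Kan}$ and closure of Kan complexes under exponentiation is standard. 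Hence in every case both conditions hold for the full defining set $S$, since \textbf{(P)} and \textbf{(C)} are each closed under unions of sets of maps.

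With these verifications in hand, \cref{the:pulling back along loc Ree right preserves loc Ree cov} applies verbatim in each row, giving that $(p_!p^*,\, p_*p^*)$ is a Quillen adjunction on $(\sss_{/X})^{(\D)}$. The only subtle point, and the one I would be most careful about, is the handling of condition \textbf{(P)} for the localizing family defining the covariant and contravariant rows; the other two families are explicitly covered in \cref{ex:maps satisfy conditions}, so no surprises there. The proof then reduces to two short sentences: cite \cref{ex:maps satisfy conditions} to see that conditions \textbf{(C)} and \textbf{(P)} hold, and cite \cref{the:pulling back along loc Ree right preserves loc Ree cov} (together with its Reedy contravariant dual) to conclude.
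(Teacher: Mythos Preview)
Your proposal is correct and follows essentially the same route as the paper: the theorem is stated as a direct specialization of \cref{the:pulling back along loc Ree right preserves loc Ree cov}, with the verification of conditions \textbf{(C)} and \textbf{(P)} for the three localizing sets done once at the start of \cref{sec:cartesian Fibrations} by reference to \cref{ex:maps satisfy conditions}. You are in fact slightly more careful than the paper in flagging that condition \textbf{(P)} for the maps $F(0) \to F(n)$ is not explicitly listed in \cref{ex:maps satisfy conditions} and supplying the standard justification; the paper simply asserts that all three families satisfy both conditions.
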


 \bibliographystyle{alpha}
 \bibliography{main}

\end{document}